\documentclass[manuscript, screen]{acmart}
\AtBeginDocument{%
  }

\makeatletter
\def\@acmSubmissionID{}
\def\@acmBooktitle{}
\def\@acmConference{}
\def\shortauthors{}
\def\shorttitle{}
\makeatother

\usepackage{microtype}
\usepackage{enumitem}
\usepackage{array} 

\usepackage{multirow} 
\usepackage[utf8]{inputenc}
\usepackage[T1]{fontenc}    
\usepackage{hyperref}       
\usepackage{url}            
\usepackage{booktabs}       
\usepackage{amsfonts}       
\usepackage{nicefrac}       
\usepackage{microtype}     
\usepackage[table]{xcolor}
\usepackage{comment}
\usepackage{caption}
\usepackage{natbib}
\usepackage{doi}
\usepackage{listings}
\usepackage[most]{tcolorbox}
\tcbuselibrary{listings,breakable}
\usepackage{tcolorbox}
 \usepackage{etoolbox} 
\usepackage{amsmath}
\usepackage{booktabs}
\usepackage{graphicx}
\usepackage[caption=false]{subfig} 
\tcbuselibrary{listings, skins, breakable}

\definecolor{codebg}{HTML}{FFF9F7}
\definecolor{codeframe}{HTML}{E6DAD6}
\definecolor{codekeyword}{HTML}{8250DF}
\definecolor{codestring}{HTML}{116329}
\definecolor{codecomment}{HTML}{6B6563}
\definecolor{codeidentifier}{HTML}{2B2726}
\definecolor{codenumber}{HTML}{9A8F8B}
\definecolor{codebuiltin}{HTML}{0550AE}

\definecolor{outputbg}{HTML}{F7F7F8}
\definecolor{outputtext}{HTML}{2F3337}
\definecolor{outputborder}{HTML}{B8BDC3}
\definecolor{outputcaption}{HTML}{202124}

\lstdefinestyle{prettyPython}{
    language=Python,
    basicstyle=\ttfamily\footnotesize\color{codeidentifier},
    identifierstyle=\color{codeidentifier},
    keywordstyle=\bfseries\color{codekeyword},
    keywordstyle=[2]\color{codebuiltin},
    stringstyle=\color{codestring},
    commentstyle=\itshape\color{codecomment},
    morekeywords=[2]{self,cls,True,False,None,NotImplemented,Ellipsis},
    numbers=left,
    numberstyle=\ttfamily\scriptsize\color{codenumber},
    stepnumber=1,
    numbersep=8pt,
    numberblanklines=false,
    showstringspaces=false,
    showspaces=false,
    showtabs=false,
    tabsize=4,
    keepspaces=true,
    columns=fullflexible,
    upquote=true,
    breaklines=true,
    breakatwhitespace=true,
    breakautoindent=true,
    captionpos=t,
    abovecaptionskip=0pt,
    belowcaptionskip=5pt,
    frame=none
}
\newtcblisting[
  auto counter,
  number within=section
]{pythoncode}[2][]{
  listing only,
  breakable,
  enhanced,
  colback=codebg,
  frame hidden,
  boxrule=0pt,
  arc=0pt,
  outer arc=0pt,
  boxsep=0pt,
  left=8mm,
  right=2mm,
  top=1.2mm,
  bottom=1.2mm,
  before skip=6pt,
  after skip=6pt,
  title={Listing~\thetcbcounter: #2},
  listing options={
    style=prettyPython
  },
  #1
}
\lstdefinestyle{prettyOutput}{
    language={},
    basicstyle=\ttfamily\footnotesize\color{outputtext},
    identifierstyle=\color{outputtext},
    keywordstyle=\color{outputtext},
    stringstyle=\color{outputtext},
    commentstyle=\color{outputtext},
    numbers=none,
    showstringspaces=false,
    showspaces=false,
    showtabs=false,
    tabsize=4,
    keepspaces=true,
    columns=fullflexible,
    upquote=true,
    breaklines=true,
    breakatwhitespace=false,
    breakautoindent=true,
    captionpos=t,
    abovecaptionskip=0pt,
    belowcaptionskip=5pt,
    frame=none
}

\newtcblisting{codeoutput}[2][]{
    listing only,
    breakable,
    enhanced,
    colback=outputbg,
    frame hidden,
    boxrule=0pt,
    arc=0pt,
    outer arc=0pt,
    boxsep=0pt,
    left=3mm,
    right=2mm,
    top=1.5mm,
    bottom=1.5mm,
    before skip=6pt,
    after skip=6pt,
    borderline west={
        0.6pt
    }{
        0pt
    }{
        outputborder
    },
    listing options={
        style=prettyOutput,
        caption={#2},
        #1
    }
}

\usepackage{tikz}
\usetikzlibrary{shapes.geometric, arrows.meta, positioning, backgrounds, calc, shadows.blur, fit}

\definecolor{boxGray}{RGB}{150, 150, 150}
\definecolor{boxWhite}{RGB}{255, 255, 255}

\definecolor{colorFP64}{RGB}{189, 215, 238}
\definecolor{colorFP32}{RGB}{248, 203, 173}
\definecolor{colorFP16}{RGB}{198, 194, 229}
\definecolor{colorE5M2}{RGB}{255, 242, 204}
\definecolor{colorBg}{RGB}{248, 250, 245}
\definecolor{lineGray}{RGB}{80, 80, 80}

\usetikzlibrary{positioning, fit, backgrounds, shadows, backgrounds}
\usetikzlibrary{decorations.pathreplacing}

\usepackage{color} 
\usepackage{amsmath,amsthm}
\usepackage{algorithm}
\usepackage{algpseudocode}
\usepackage{graphicx} 

\usepackage[caption=false]{subfig} 
\newtheorem{theorem}{Theorem}
\newtheorem{lemma}[theorem]{Lemma}

\newcommand{\xyy}[1]{{\color{black}{#1}}}

\begin{document}
\title{Automated Numerical Stability Analysis of Deep Learning Operators}
% Numerical Stability Detection for Deep Learning Operators
% Automated Numerical Stability Analysis of Deep Learning Operators
% Operator-Level Numerical Stability Detection for Deep Neural Networks
% A Unified Framework for Detecting Numerical Instability in Deep Learning Operators
\author{Xinye Chen}
\orcid{0000-0003-1778-393X}
\email{xinye.chen@lip6.fr}
\affiliation{%
  \institution{Sorbonne Université, CNRS, LIP6}
  \city{Paris}
  \country{France}
  \thanks{The work on this paper is still ongoing, and the paper is expected to be submitted soon. This work was supported by the France 2030 NumPEx Exa-MA project (ANR-22-EXNU-0002) managed by the French National Research Agency (ANR)}
}

\renewcommand{\shortauthors}{Chen}

\begin{abstract}
Finite-precision arithmetic unavoidably introduces numerical approximation errors. Numerical computations may use insufficient precision or an improper formulation, which leads to numerical instability.  In this paper, we introduce a unified software tool for stochastic numerical validation of deep-learning operators. The tool follows CESTAC on supported exposed operations and uses an operator-level data-perturbation approximation for GEMM-like kernels. Our developed software not only enables numerical validation with a single computation pass but also detects the sources of numerical instability and provides numerical stability monitoring during deep learning training and inference. We verified its effectiveness on the detection of polluted operators with injected numerical instabilities across various tasks. We believe that our developed method and tools provide valuable insights into developing numerically stable computing kernels, which are particularly critical for numerically stable and efficient deep learning training and inference. Our software is publicly available at \url{https://github.com/chenxinye/noisefloat}. 
\end{abstract}

\begin{CCSXML}
<ccs2012>
   <concept>
       <concept_id>10002950.10003705</concept_id>
       <concept_desc>Mathematics of computing~Mathematical software</concept_desc>
       <concept_significance>500</concept_significance>
       </concept>
   <concept>
       <concept_id>10002950.10003714.10003715</concept_id>
       <concept_desc>Mathematics of computing~Numerical analysis</concept_desc>
       <concept_significance>500</concept_significance>
       </concept>
   <concept>
       <concept_id>10011007.10011074.10011099</concept_id>
       <concept_desc>Software and its engineering~Software verification and validation</concept_desc>
       <concept_significance>500</concept_significance>
       </concept>
   <concept>
       <concept_id>10010147.10010257</concept_id>
       <concept_desc>Computing methodologies~Machine learning</concept_desc>
       <concept_significance>500</concept_significance>
       </concept>
 </ccs2012>
\end{CCSXML}

\ccsdesc[500]{Mathematics of computing~Mathematical software}
\ccsdesc[500]{Mathematics of computing~Numerical analysis}
\ccsdesc[500]{Software and its engineering~Software verification and validation}
\ccsdesc[500]{Computing methodologies~Machine learning}

\keywords{numerical validation, numerical stability, deep learning operators.}

% 向量的最大最小值digits
\maketitle

% the difficulty of the integrating CESTAC
% The instability is related to the value and formulation

\section{Introduction}

Deep learning increasingly uses reduced-precision hardware to accelerate training and inference. Scientists and engineers employ reduced-precision arithmetic formats (partially presented in \tablename~\ref{tab:ftp}) defined by standards such as IEEE 754 \cite{ieee754-2019} and the emerging IEEE P3109 standard \cite{ieee_p3109_interim}. Compared with IEEE float64 format (FP64), these formats can significantly improve computational throughput while reducing energy consumption and memory footprint. However, reduced precision can increase numerical errors and expose numerical instabilities.

To quantify the numerical stability and reliability of numerical computations, one approach is rigorous error analysis. The round-off errors of fixed-point arithmetic can be modeled as white noise, independent of the input signal while the floating-point error depends strongly on the signal magnitude at runtime, which complicates error analysis.  Alternatively, tools and techniques such as Discrete Stochastic Arithmetic \citep{Vignes2004}, Monte Carlo Arithmetic \citep{parker1997mca}, CADNA \citep{Jez2008}, MCALIB \citep{frechtling2015mcalib}, and  Verificarlo \citep{denis2016verificarlo} support numerical validation at runtime. These approaches generally repeat a computation under randomized arithmetic or perturbations and therefore incur additional runtime and memory costs.

Deep learning has achieved remarkable success across a wide range of applications, including computer vision \cite{krizhevsky2012imagenet, he2016deep}, natural language processing~\cite{vaswani2017attention,devlin2019bert}, speech recognition~\cite{hinton2012deep,hannun2014deep}, and pattern recognition~\cite{lecun1998gradient,lecun2015deep}. Despite continued improvements in model accuracy, the reliability of these models depends not only on their robustness to input perturbations but also on their numerical stability throughout the computational process.
With growing hardware support, mixed-precision computing is widely used in deep-learning training and inference \cite{micikevicius2018mixed, ferro2022neural, ioualalen2019precision}. Reduced- and mixed-precision formats reduce memory traffic and often improve throughput, but the resulting execution combines format conversions, operator-specific accumulation precisions, and fused backend kernels. Consequently, an observed loss of accuracy may depend on several interacting precision choices and can be difficult to attribute to a particular operation. Selecting suitable precisions therefore requires precision-tuning tools and studies \cite{10387857, Ben2022, GRAILLAT2019101017, 10820739, chen2026} or analyses tailored to the numerical kernels involved \cite{doi:10.1137/17M1122918, doi:10.1137/18M1226312, car2024kmeans, arar2025mixedprecisionaccumulationneural}. A survey of reduced-precision tools is provided in \cite{10.1145/3381039}. Rounding-error amplification, catastrophic cancellation, ill-conditioned reductions, unstable normalization, and sensitivity in attention or recurrent computations may arise inside a training or inference pipeline without causing an immediate program failure. Numerical validation is therefore needed to assess the accuracy of computed results and to support justified claims about the reliability of training and inference.

Deep-learning models compose many tensor operators whose input, accumulation, and output precisions may differ. Improper low-precision choices can produce unstable losses or catastrophic loss growth during training \cite{qiu2026lowprecision, wortsman2023stable, lee2024fp8back, liu2026grokking}. Localizing the responsible operator and selecting an appropriate analysis granularity remain difficult. Developers commonly mitigate such failures by evaluating sensitive intermediate expressions in a higher precision or by reformulating them. Automatic-differentiation frameworks and machine-learning libraries such as PyTorch \cite{paszke2019pytorch}, TensorFlow \cite{abadi2016tensorflow}, JAX \citep{jax2018github} 
and scikit-learn \cite{scikit-learn} use such internal precision conversions in selected computations; pairwise Euclidean distance is one example \cite{car2024kmeans}. A comparative study reports that the tested TensorFlow operators exhibit more floating-point errors than their PyTorch counterparts when the implementations omit these conversions \cite{10.1145/3773992}.

Numerical stability of deep-learning operators has been studied through deterministic and probabilistic error analysis \cite{budzinskiy20, 10.1093/imanum/draf130, arar2025mixedprecisionaccumulationneural}, precision tuning \cite{ferro2022neural, ioualalen2019precision, lauter2020framework}, stochastic-arithmetic experiments \cite{faraone2019montecarlo, pepe2026fuzzy}, and fixed-point or quantization methods \cite{khalifa2024fixedpoint, lohar2023sound, gernigon2024adaqat}. These approaches establish important error bounds or evaluate numerical variability, but automated operator-level localization of the source of accuracy loss remains limited. Because stability can depend on both the algorithm and its data, exhaustive analysis may be costly in practice. Understanding how numerical errors propagate through deep networks and affect training is therefore important to both academia and industry.

\begin{table}[ht]
\centering\small
\setlength{\tabcolsep}{4pt}
\caption{Comparison of floating-point types. Here $p$ is the precision, including the implicit leading significand bit for normal numbers, and the unit roundoff is $u=2^{-p}$. % Floating-point numbers have at most $\left\lfloor p\log_{10}2 \right\rfloor$ exact decimal digits.
}
\label{tab:ftp}
\begin{tabular}{l c c c c c}
\toprule
Floating-Point Type
& Total Bits
& Exponent Bits ($e$)
& Significand Bits ($p$)
& Unit Roundoff
& Decimal Significant Digits \\
\midrule
BF16
& 16 & 8 & 8
& $2^{-8} \approx 3.91 \times 10^{-3}$
& $\approx 2.41$ \\

FP16
& 16 & 5 & 11
& $2^{-11} \approx 4.88 \times 10^{-4}$
& $\approx 3.31$ \\

TF32
& 19 & 8 & 11
& $2^{-11} \approx 4.88 \times 10^{-4}$
& $\approx 3.31$ \\

FP32
& 32 & 8 & 24
& $2^{-24} \approx 5.96 \times 10^{-8}$
& $\approx 7.22$ \\

FP64
& 64 & 11 & 53
& $2^{-53} \approx 1.11 \times 10^{-16}$
& $\approx 15.95$ \\
\bottomrule
\end{tabular}
\end{table}

Numerical validation plays an important role in scientific computing by assessing the reliability of specific finite-precision computations. CESTAC, introduced by Vignes and La Porte \citep{vignes1974error} and subsequently developed as stochastic arithmetic \citep{VIGNES1993233}, can identify instability sources such as severe cancellation and unstable branching when its stochastic samples are evaluated synchronously. Internally, CESTAC estimates the reliability of floating-point results from synchronous stochastic executions with directed rounding.  Existing implementations such as CADNA primarily expose overloaded scalar types for C, C++, and Fortran, but it does not natively instrument Python tensor operators, accelerator-resident tensors, or automatic-differentiation graphs. As an Alternative approach, Monte Carlo Arithmetic instead provides an asynchronous variability analysis across repeated executions \citep{parker1997mca}; it does not directly attribute instability sources. The resulting diagnostics can guide algorithm reformulation and improve the reliability of the final computation.

In this paper, we present the framework \texttt{noisefloat} that integrates stochastic numerical validation with deep learning auto-differentiation frameworks. Internally, the framework examines the numerical stability through overloaded values, wrapped operators, and user-defined instrumentation boundaries. In detail, \texttt{noisefloat} implements \xyy{arithmetic-level instrumentation with CESTAC-style stochastic propagation on supported exposed operations.} The framework records significant-digit estimates by operator and iteration during training, validation, or inference on CPUs and GPUs. Users select the execution phase, instrumentation granularity, and whether the straight-through estimator is enabled. To the best of our knowledge, this is among the first frameworks to incorporate CESTAC-inspired stochastic numerical validation into deep-learning training and inference.
This integration is nontrivial since \xyy{directly applying instruction-level CESTAC throughout deep-learning operators} would replicate stochastic samples through large tensor graphs{--}incurring substantial runtime and memory overhead, and introduce randomized quantization steps that are not differentiable in the usual automatic-differentiation sense. Therefore, the framework combines synchronized stochastic representatives with a straight-through estimator for gradient-based training, and treats GEMM operators through a data-perturbation approximation when their stochastic inputs are insufficiently separated, following the trusted-operator rationale of stochastic validation for dense linear algebra.

% Arithmetic-level instrumentation follows CESTAC on supported exposed operations, whereas operator-boundary instrumentation provides a coarser MCA-style approximation

% For example, users can analyze the numerical stability of an entire layer, a combination of a convolutional and pooling layer, CESTAC information such as significant digits, or the stability of a single matrix multiplication operation.

Our contributions are summarized as follows:
\begin{itemize}
    \item We develop a Python stochastic-validation tool that integrates with modern deep-learning training and inference. It batches multiple stochastic representatives within one instrumented forward evaluation and monitors selected operators over training and inference iterations;
    \item Our developed method and tools enable efficient automated detection and localization of numerical instabilities in deep learning operators, offering insights into the development of numerically stable computing kernels, which are particularly critical for efficient deep learning deployment;
    \item We provide an empirical study of deep learning operators and case studies on controlled pathological operators based on our developed framework; the results indicate how unstable operators affect training, in which situations their significant digits will fluctuate, and how.
\end{itemize}

Our work contributes to a framework for understanding numerical reliability of numerical computation in scientific computing and deep learning, enabling automated numerical stability detection and offering valuable insights into designing numerically stable and robust numerical algorithms and neural architectures.

% \paragraph{Paper organization.} 
The remainder of the paper is organized as follows; Section~\ref{sec:related_work} reviews prior work on numerical validation, stochastic arithmetic, and numerical reliability in deep learning. Section~\ref{sec:method} formulates our approach and software framework; in detail, we describe our methodology and explain how we incorporate stochastic validation into numerical-stability detection for deep-learning operators. Section~\ref{sec:experiments} first validates the framework on stable and unstable numerical computations, then examines representative deep-learning operators, and finally evaluates the method on end-to-end deep-learning tasks. The final section concludes the paper and outlines future work.

\section{Related Work}\label{sec:related_work}
We review work on numerical validation and stability analysis in deep learning, including approaches closely related to the stochastic operator analysis proposed here. Although automated numerical validation at the level of deep-learning operators remains limited, numerical validation has a substantial history in scientific computing. We therefore focus on the studies and software most relevant to our formulation.

The probabilistic method  for numerical validation relies on repeated computation, either with different rounding modes or data perturbation. The CESTAC assumes that the digits common to the results obtained under all rounding modes are significant. DSA \citep{Vignes2004}, the subsequent work of CESTAC method, the notion of computational zero, and discrete stochastic relations. This framework enables the monitoring of scientific computations, the detection of numerical instabilities, and the verification of the underlying assumptions, thereby enhancing the robustness and reliability of numerical results.  Similarly to CESTAC, Monte Carlo Arithmetic (MCA) \citep{852391}, is capable of tracking the rounding error at runtime via the randomization. MCA collects a few trials of computations so that the numerical computations become a Monte Carlo simulation, where the statistics on the effect of rounding errors are obtained. The CADNA \citep{Jez2008} and Verificarlo \citep{DBLP:journals/corr/DenisCP15} serve as high-performance tools for the implementation of  DSA  and MCA.

In the area of deep learning, mixed-precision computing \citep{micikevicius2018mixed} is widely adopted and is receiving increasing attention in academia and industry. How to design numerically stable kernels and operators requires both theoretical and engineering insights. \cite{10.1145/3773992} systematically investigates precision casting in the implementation of deep learning operators. Rather than focusing on model architectures or mixed-precision training strategies, the paper examines how the deep learning  frameworks internally change the precision of selected floating-point computations to improve numerical stability. Fuzzy PyTorch \cite{pepe2026fuzzy} enables numerical variability in deep learnin models by integrating stochastic arithmetic into PyTorch through Probabilistic Rounding interfacing with Verificarlo \cite{DBLP:journals/corr/DenisCP15}.  However, it is limited to CPU implementation, which restricts its scalability. \cite{10.1145/3773992} investigates the precision cast inside the deep learning operators in the deep learning  framework of both Tensorflow and PyTorch.  The correctness of numerical computations often relies on the algorithm's inherent stability rather than careful error analysis.

The software framework proposed in this paper closes this integration gap by representing numerical values and tensors as stochastic samples and by lifting arithmetic and neural-network operators to act sample-wise on these samples.  It works as a runtime transformation of scalars, arrays, and tensor operators rather than by intercepting the hardware rounding mode.

%The framework in this paper closes the software-integration gap by treating numerical values and tensors as stochastic samples, then lifting arithmetic and neural-network operators to operate sample-wise on those samples. It works as a runtime transformation of scalars, arrays and tensor operators rather than by intercepting the hardware rounding mode.

\section{Automated Numerical Stability Analysis of Deep Learning Operators}
\label{sec:method}

This section formulates \texttt{noisyfloat} as a runtime program
transformation for numerical-stability analysis of deep-learning software.
The object of analysis is not an isolated floating-point expression only, but
a typed tensor program whose nodes include scalar arithmetic, array operations,
and neural-network operators such as linear maps, convolutions, normalizations,
activations, attention, recurrent layers, reductions, and losses.  The goal is
to estimate, during training or inference, how many significant digits remain
reliable at the outputs of selected operators and to expose these estimates through
structured software diagnostics. Randomized directed rounding is applied at the
instrumentation boundaries defined below; consequently, the interpretation of
the diagnostics depends on whether arithmetic-level or operator-boundary
instrumentation is selected.

We model a numerical program as a typed computational graph $\mathcal{P}=(\mathcal{G},\Theta)$, where $\mathcal{G}=(\mathcal{V},\mathcal{E})$, $\mathcal{V}$ is the set of operation nodes, and $\mathcal{E}$ is the set of data-dependency edges. Each node $v\in\mathcal{V}$ denotes a scalar arithmetic operation, an array operation, or a neural-network operator, and $\Theta$ denotes immutable program state such as learned parameters and buffers. For every node $v$, $\mathcal{X}_v$ denotes the typed value space associated with its output. The node is associated with an operator
\begin{equation}
f_v: \mathcal{X}_{u_1}\times\cdots\times\mathcal{X}_{u_m}\times\Theta_v
\rightarrow \mathcal{X}_v,
\end{equation}
where $u_1,\ldots,u_m$ are predecessor nodes in the graph and $\Theta_v$ is the state used by node $v$. The deterministic execution computes
\begin{equation}
x_v=f_v(x_{u_1},\ldots,x_{u_m};\Theta_v).
\end{equation}

The stochastic program transformation implemented by \texttt{noisyfloat} maps $\mathcal{P}$ to a transformed program
\begin{equation}
\mathcal{T}_{s,Q}(\mathcal{P}),
\end{equation}
where $s$ is the number of stochastic samples and $Q$ is a backend-native software quantizer. Each deterministic value $x_v$ is replaced by a stochastic state $X_v=(x_v^{(1)},\ldots,x_v^{(s)})$ and selected operators are lifted sample-wise. The transformed graph preserves the dependency structure of $\mathcal{G}$ but replaces each selected node $v$ by a lifted operator $\widehat{f}_v$, in other words, 
the structure of the original program is preserved while numerical-reliability 
reports is attached to chosen operators and iterations.

The design is governed by four principles. First, stochastic
quantization must be backend-native so that NumPy arrays, PyTorch tensors, JAX
arrays, and TensorFlow tensors remain in their original execution ecosystem.
Second, the differentiable tensor path must coexist with automatic
differentiation through a straight-through estimator. Third, diagnostics must
be reported at the level of deep-learning operators, because this is the level
at which developers decide whether to reformulate, replace, or precision-cast a
component. Fourth, the same instrumentation must be usable during training,
validation, and inference, so that numerical reliability can be tracked over
iterations. Figure~\ref{fig:nfloat-method-overview} summarizes this formulation.

\begin{figure}[t]
\centering
\resizebox{\linewidth}{!}{%
\begin{tikzpicture}[
    >=Stealth,
    font=\sffamily\huge,
    stage/.style={rectangle, rounded corners=2pt, draw=black!55, fill=white, thick,
        minimum height=2.20cm, text width=3.35cm, align=center},
    mode/.style={rectangle, rounded corners=2pt, draw=blue!60!black, fill=blue!4,
        thick, minimum height=2.20cm, text width=3.80cm, align=center},
    report/.style={rectangle, rounded corners=2pt, draw=green!45!black, fill=green!5,
        thick, minimum height=2.20cm, text width=3.60cm, align=center},
    guard/.style={rectangle, rounded corners=2pt, draw=orange!70!black, fill=orange!7,
        thick, minimum height=2.20cm, text width=4.10cm, align=center},
    arr/.style={->, thick, draw=black!65},
    darr/.style={->, thick, dashed, draw=black!55},
    label/.style={font=\sffamily\LARGE, align=center, text=black!75}
]

\node[stage] (program) at (0, 0) {User numerical or neural-network program\\[-0.4mm]\Large scalar, tensor, operator graph};
\node[stage] (samples) at (8.0, 0) {Stochastic representation\\[-0.4mm]\Large $X=(x^{(1)},\ldots,x^{(s)})$};

\node[mode] (operator) at (14.3, 2.0) {Operator-boundary lifting\\[-0.4mm]\Large evaluate each sample through backend operator};
\node[mode] (arithmetic) at (14.3, -2.0) {Full/arithmetic CESTAC mode\\[-0.4mm]\Large propagate \texttt{NFloatTensor} through exposed primitives};

\node[guard] (gemm) at (20.7, -2.0) {GEMM-like trusted operator\\[-0.4mm]\Large use data perturbation if sample separation $<\eta$};
\node[stage] (quant) at (20.7, 2.0) {Backend-native stochastic quantization\\[-0.4mm]\Large $Q_r$ after selected operator output};

\node[stage] (rep) at (26.1, 1.35) {Representative value\\[-0.4mm]\Large $\overline{Y}=s^{-1}\sum_i y^{(i)}$};
\node[stage] (digits) at (26.1, -1.35) {Significant-digit field\\[-0.4mm]\Large $D_Y=C_{\overline{Y}}$};
\node[report] (reports) at (31.7, 0) {Structured diagnostics\\[-0.4mm]\Large \texttt{KernelReport}, iteration tracker, optional source counters};

\draw[arr] (program) -- (samples) node[midway, above=4pt, label, fill=white, inner sep=2pt] {\texttt{NFloat}/\texttt{NFloatTensor}};

\draw[arr] (samples.east) -- ++(0.7,0) |- (operator.west);
\draw[arr] (samples.east) -- ++(0.7,0) |- (arithmetic.west);
\draw[arr] (operator) -- (quant);
\draw[arr] (arithmetic) -- (gemm);
\draw[darr] (gemm.north) -- ++(0,1.2) -| (quant.south);
\draw[arr] (quant.east) -- ++(0.7,0) |- (rep.west);
\draw[arr] (quant.east) -- ++(0.7,0) |- (digits.west);
\draw[arr] (rep.east) -- (reports.west);
\draw[arr] (digits.east) -- (reports.west);

\node[label, above=0.36cm of operator] {efficient graph-aligned analysis};
\node[label, below=0.36cm of arithmetic] {finer PyTorch-visible arithmetic propagation};
\node[label, below=0.36cm of reports] {used over training, validation,\\and inference iterations};

\end{tikzpicture}%
}
\caption{Overview of the \texttt{noisyfloat} framework. A deterministic scalar or tensor value is represented by a tuple of stochastic samples, and selected deep-learning operators are lifted sample-wise. The full arithmetic mode propagates stochastic tensors through supported PyTorch-visible primitives. A ``trusted GEMM-like operator'' is a backend matrix multiplication, linear map, or convolution whose internal multiply-add operations are not instrumented; the method instead uses the input-perturbation safeguard defined in Eq.~\eqref{eq:gemm_data_perturbation}. Reports record operator-level significant digits and optional instability-source diagnostics.}
\label{fig:nfloat-method-overview}
\end{figure}

% The operator-boundary mode quantizes after each instrumented operator output and is an MCA-style approximation rather than instruction-level CESTAC. 

% \subsection{Program Transformation Model}
Thus, the instrumentation boundary is explicit. Overloaded scalar values apply randomized directed rounding after each supported Python-visible arithmetic operation. Deep-learning wrappers, by contrast, may either round only the output of a selected neural-network operator or use the arithmetic-level path to propagate stochastic values through supported PyTorch-visible primitives. Operations fused inside a vendor kernel remain outside instruction-level instrumentation. Therefore, the framework should be interpreted as CESTAC-inspired rather than as a complete instruction-level implementation of CESTAC: it follows CESTAC-style synchronous stochastic propagation for supported exposed operations, while GEMM-like backend kernels are handled as trusted operators through the operator-level data-perturbation approximation described below.

\subsection{Stochastic Representation and Operator Lifting}

The Control and Stochastic Estimation of Rounding Errors (CESTAC) method~\citep{vignes1974error,VIGNES1993233} estimates the effect of rounding errors by executing a computation over several synchronous stochastic realizations. Let $x$ denote the deterministic value represented by a floating-point scalar, vector, matrix, or tensor. The framework associates with $x$ a stochastic representation
\begin{equation}
X = \left(x^{(1)},x^{(2)},\ldots,x^{(s)}\right),
\label{eq:stochastic_representation}
\end{equation}
where $s$ is the number of stochastic samples. Following the standard CADNA setting, the default value is $s=3$, although the implementation exposes this as a configuration parameter.

Let $z$ be an exact intermediate result, and let $z^{-}$ and $z^{+}$ be the adjacent representable values of the simulated format that bracket it. We write $Q_r(z)$ for software quantization with randomized directed rounding: each stochastic realization evaluates an arithmetic operation under a randomly selected upward or downward directed rounding mode. This CESTAC rule is distinct from conventional stochastic rounding, which selects $z^{-}$ and $z^{+}$ with probabilities proportional to their relative distances from $z$. The implementation emulates the directed-rounding outcomes with backend tensor operations rather than repeatedly changing the hardware rounding mode.

For an $m$-ary arithmetic operation or numerical operator $f$, the lifted stochastic operator is defined sample-wise as
\begin{equation}
\widehat{f}(X_1,\ldots,X_m)
=
\left(
Q_r\!\left(f\left(x_1^{(i)},\ldots,x_m^{(i)}\right)\right)
\right)_{i=1}^{s}.
\label{eq:lifted_operator}
\end{equation}
The synchronous evaluation first computes the $i$th representative of every operand and the corresponding output; it then proceeds to representative $i+1$. This preserves sample correspondence across the graph and lets output dispersion reflect the rounding sensitivity of the executed computation. For scalar arithmetic, the lifting applies to supported Python-visible operations such as addition, subtraction, multiplication, division, power, and selected mathematical functions, with rounding after each exposed arithmetic operation. For tensor computations, two interpretations are distinguished. Arithmetic-level instrumentation applies the rule after each supported PyTorch-visible primitive. Operator-boundary instrumentation evaluates an entire backend operator independently for each stochastic sample and quantizes only its output; it is therefore an operator-level approximation and does not support instruction-level CESTAC claims or counts for arithmetic hidden inside the backend kernel.

For a stochastic output $Y=(y^{(1)},\ldots,y^{(s)})$, the representative value is the sample mean
\begin{equation}
\overline{Y}=\frac{1}{s}\sum_{i=1}^{s}y^{(i)}.
\label{eq:representative_value}
\end{equation}
The sample spread estimates the numerical uncertainty. Let
\begin{equation}
\sigma_Y=
\sqrt{\frac{1}{s-1}\sum_{i=1}^{s}\left(y^{(i)}-\overline{Y}\right)^2}
\end{equation}
be the sample standard deviation. The estimated number of significant decimal digits is given by
\begin{equation}\label{eq:significant_digit}
C_{\overline{Y}}
=
\log_{10}
\left(
\frac{\sqrt{s}\,|\overline{Y}|}{\tau_{\beta}\sigma_Y}
\right),
\end{equation}
where $\tau_{\beta}$ is the Student $t$ critical value with $s-1$ degrees of freedom at the configured confidence level. For tensor outputs, this equation is applied elementwise, yielding a digit field
\begin{equation}
D_Y = C_{\overline{Y}} \in \mathbb{R}^{\operatorname{shape}(Y)}.
\end{equation}

% Our software records the aggregate summaries $\operatorname{mean}(D_Y)$, $\operatorname{median}(D_Y)$, $\operatorname{min}(D_Y)$, and $\operatorname{max}(D_Y)$ for operator ranking and iteration-level plots. In practice, we rely more on  $\operatorname{mean}(D_Y)$ and $\operatorname{min}(D_Y)$; therefore, in the experiment section, our analysis focuses on the two metrics. 

\begin{figure}[htp]
\centering
\begin{pythoncode}[label={lst:scalar-diagnostics}]{Scalar stochastic arithmetic and diagnostics}
from noisefloat import NFloat, configure
from noisefloat import clear_diagnostics, get_diagnostics_summary

configure(
    backend="numpy",
    exp_bits=11,
    sig_bits=52,
    n_samples=3,
    random_state=42,
    diagnostics_level="summary",
)
clear_diagnostics()

x = NFloat(10864.0)
y = NFloat(18817.0)
p = 9.0 * x**4 - y**4 + 2.0 * y**2

print(p.value)                  # representative sample mean
print(p.digits)                 # estimated significant digits
print(get_diagnostics_summary())
\end{pythoncode}
\caption{Minimal scalar \texttt{NFloat} example for stochastic arithmetic and diagnostic source counting.}\label{fig:scalar-diagnostics}
\end{figure}

The implementation follows robust conventions for exceptional cases: identical samples are treated as fully stable up to the implementation cap on reported digits; zero mean with nonzero spread is treated as numerical noise; and samples containing NaN or infinity are reported with zero significant digits. These conventions support automated diagnostics and avoid interpreting nonfinite or purely noisy quantities as reliable numerical results.

The scalar interface in \figurename~\ref{fig:scalar-diagnostics} is the minimal executable form of this transformation. The user configures the simulated format, replaces ordinary floating-point inputs by \texttt{NFloat} values, and obtains both a representative value and a CESTAC-style digit estimate. When diagnostics are enabled, the same execution also updates aggregate counters for numerical instabilities.

For arithmetic-level instrumentation, the method also records source-attribution counters patterned after CADNA. Each counter is incremented when an exposed operation satisfies the corresponding instability test: loss of accuracy due to cancellation, unstable branching, unstable division, unstable multiplication, unstable power, intrinsic-function instability, or a mathematical anomaly such as NaN or infinity. These counters attribute events only at operations intercepted by the software; they do not describe arithmetic hidden inside backend kernels. NumPy and PyTorch examples and their outputs are shown in Figures~\ref{fig:hilbert-determinant-demo-np} and~\ref{fig:hilbert-determinant-demo-th}. % Accordingly, we observe that \texttt{noisyfloat} with PyTorch backend achieves lower runtime that NumPy. 

\begin{figure*}[htp]
\centering
\begin{minipage}[t]{0.485\textwidth}
\vspace{0pt}
\begin{pythoncode}{Deterministic rounding}
def hilbert_determinant() -> np.float32:
    n = 11
    a = [
        [np.float32(1.0) / np.float32(i + j + 1.0)
         for j in range(n)]
        for i in range(n)
    ]
    det = np.float32(1.0)

    for i in range(n - 1):
        det = det * a[i][i]
        aux = np.float32(1.0) / a[i][i]
        for j in range(i + 1, n):
            a[i][j] = a[i][j] * aux
        for j in range(i + 1, n):
            aux = a[j][i]
            for k in range(i + 1, n):
                a[j][k] = (
                    a[j][k]
                    - aux * a[i][k]
                )

    det = det * a[n - 1][n - 1]
    return det
\end{pythoncode}
\end{minipage}
\hfill
\begin{minipage}[t]{0.485\textwidth}
\vspace{0pt}
\begin{pythoncode}{NFloat stochastic equivalents}
def hilbert_determinant_nfloat():
    n = 11
    a = [
        [NFloat(1.0 / (i + j + 1.0))
         for j in range(n)]
        for i in range(n)
    ]
    det = NFloat(1.0)

    for i in range(n - 1):
        det = det * a[i][i]
        aux = 1.0 / a[i][i]
        for j in range(i + 1, n):
            a[i][j] = a[i][j] * aux
        for j in range(i + 1, n):
            aux = a[j][i]
            for k in range(i + 1, n):
                a[j][k] = (
                    a[j][k]
                    - aux * a[i][k]
                )

    det = det * a[n - 1][n - 1]
    return det
\end{pythoncode}
\end{minipage}

\begin{minipage}[t]{0.60\linewidth}
\vspace{0.8em}
\begin{codeoutput}{Single-precision stochastic output}
Pivot number  1 = 9.999999602636E-01  digits=6.466
Pivot number  2 = 8.333335071802E-02  digits=5.930
Pivot number  3 = 5.555580214908E-03  digits=4.649
Pivot number  4 = 3.571556784057E-04  digits=3.696
Pivot number  5 = 2.266229845797E-05  digits=2.241
Pivot number  6 = 1.380530610125E-06  digits=0.554
Pivot number  7 = @.0
Pivot number  8 = @.0
Pivot number  9 = @.0
Pivot number 10 = @.0
Pivot number 11 = @.0
Determinant = @.0
\end{codeoutput}
\end{minipage}
\hfill
\begin{minipage}[t]{0.36\linewidth}
\vspace{0.8em}
\begin{codeoutput}{Timing comparison}
Example: Hilbert determinant
Precision: single
Repeats: 3

Deterministic mean: 1.856873e-04 s
NFloat mean : 2.087731e-01 s
Overhead : 1124.326x

Detected sources:
4 unstable divisions
44 unstable multiplications
\end{codeoutput}
\end{minipage}
\caption{Hilbert determinant diagnostic under stochastic single precision. The two code fragments show the same Gaussian-elimination recurrence on the Hilbert matrix: the left fragment uses NumPy single-precision scalar arithmetic with deterministic round-to-nearest, whereas the right fragment replaces scalar values by \texttt{NFloat} stochastic variables. Global configuration and output-formatting helpers are omitted from the code fragment; the excerpted output is taken from the verification script run with three FP32 stochastic samples. The stochastic execution exposes the progressive loss of significant digits in the pivots and reports the determinant as numerically unreliable. The timing block reports mean runtime over three repeats and compares the single-precision deterministic baseline with the instrumented \texttt{NFloat} execution.}
\label{fig:hilbert-determinant-demo-np}
\end{figure*}

\begin{figure*}[htp]
\centering
\begin{minipage}[t]{0.485\textwidth}
\vspace{0pt}
\begin{pythoncode}{Deterministic rounding}
def tdet(value: float | int) -> torch.Tensor:
    return torch.tensor(value, 
        dtype=torch.float32)

def hilbert_determinant_th() -> torch.float32:
    n = 11
    a = [[tdet(1.0 / (i + j + 1.0)) for j in range(n)] for i in range(n)]
    det = tdet(1.0)
    for i in range(n - 1):
        det = det * a[i][i]
        aux = 1.0 / a[i][i]
        for j in range(i + 1, n):
            a[i][j] = a[i][j] * aux
        for j in range(i + 1, n):
            aux = a[j][i]
            for k in range(i + 1, n):
                a[j][k] = a[j][k] - aux * a[i][k]
               
    det = det * a[n - 1][n - 1]
    return det
\end{pythoncode}
\end{minipage}
\hfill
\begin{minipage}[t]{0.485\textwidth}
\vspace{0pt}
\begin{pythoncode}{NFloat stochastic variables}
def nf(value: float | int) -> NFloat:
    return NFloat(torch.tensor(value, dtype=torch.float32))
    
def hilbert_determinant_th_nfloat() -> NFloat:
    n = 11
    a = [[nf(1.0 / (i + j + 1.0)) for j in range(n)] for i in range(n)]
    det = nf(1.0)
    for i in range(n - 1):
        det = det * a[i][i]
        aux = 1.0 / a[i][i]
        for j in range(i + 1, n):
            a[i][j] = a[i][j] * aux
        for j in range(i + 1, n):
            aux = a[j][i]
            for k in range(i + 1, n):
                a[j][k] = a[j][k] - aux * a[i][k]
               
    det = det * a[n - 1][n - 1]
    return det
\end{pythoncode}
\end{minipage}

\begin{minipage}[t]{0.60\linewidth}
\vspace{0.8em}
\begin{codeoutput}{Single-precision stochastic output}
Pivot number  1 = 1.000000039736E+00  digits=6.466
Pivot number  2 = 8.333325137695E-02  digits=5.611
Pivot number  3 = 5.555567486833E-03  digits=4.708
Pivot number  4 = 3.572066101090E-04  digits=3.656
Pivot number  5 = 2.269889409945E-05  digits=2.008
Pivot number  6 = 1.311535053598E-06  digits=0.732
Pivot number  7 = @.0
Pivot number  8 = @.0
Pivot number  9 = @.0
Pivot number 10 = @.0
Pivot number 11 = @.0
Determinant = @.0
\end{codeoutput}
\end{minipage}
\hfill
\begin{minipage}[t]{0.36\linewidth}
\vspace{0.8em}
\begin{codeoutput}{Timing comparison}
Example: Hilbert determinant
Precision: single
Repeats: 3

Deterministic mean: 7.840321e-03 s
NFloat mean : 1.000234e+00 s
Overhead : 127.576x

Detected sources:
4 unstable divisions
44 unstable multiplications
\end{codeoutput}
\end{minipage}
\caption{Hilbert determinant diagnostic under stochastic single precision. The two code fragments show the same Gaussian-elimination recurrence on the Hilbert matrix: the left fragment uses PyTorch single-precision scalar arithmetic with deterministic round-to-nearest, whereas the right fragment replaces scalar values by \texttt{NFloat} stochastic variables backended by PyTorch. Global configuration and output-formatting helpers are omitted from the code fragment; the excerpted output is taken from the verification script run with three FP32 stochastic samples. The stochastic execution exposes the progressive loss of significant digits in the pivots and reports the determinant as numerically unreliable. The timing block reports mean runtime over three repeats and compares the single-precision deterministic baseline with the instrumented \texttt{NFloat} execution.}
\label{fig:hilbert-determinant-demo-th}
\end{figure*}

\subsection{Backend-Native Stochastic Quantization}

The quantizer in \texttt{noisyfloat} is parameterized by the backend, exponent bits, significand bits, number of stochastic samples, random seed, confidence level, diagnostics configuration, and reporting detail. The public configuration object exposes these parameters through \texttt{configure} and \texttt{get\_config}; relevant fields include \texttt{backend}, \texttt{exp\_bits}, \texttt{sig\_bits}, \texttt{n\_samples}, \texttt{random\_state}, \texttt{confidence}, \texttt{trace}, \texttt{digits\_threshold}, \texttt{zero\_digits\_threshold}, \texttt{diagnostics\_level}, \texttt{kernel\_report\_detail}, and thresholds used for cancellation and data perturbation.

Several of these parameters are also runtime-overhead controls. Setting \texttt{diagnostics\_level="off"} disables CADNA-style source attribution counters; \texttt{"summary"} retains only aggregate source-of-instability counts; and \texttt{"detailed"}, or equivalently \texttt{trace=True}, stores per-operation diagnostic events and is therefore more expensive. Similarly, \texttt{kernel\_report\_detail="summary"} stores only scalar report fields and tensor shapes, whereas \texttt{"full"} stores sample tensors, representative values, and digit arrays inside each report. The verification examples use \texttt{diagnostics\_level="summary"} because their purpose is to compare source-of-instability summaries with CADNA C. By contrast, deep-learning training experiments primarily analyze operator significant digits over iterations, so they are intended to run with \texttt{trace=False}, \texttt{diagnostics\_level="off"} unless source attribution is explicitly needed, and \texttt{kernel\_report\_detail="summary"}. This reduces logging and memory traffic, although the dominant computational overhead remains the stochastic sample count $s$, the selected instrumentation granularity, and any arithmetic-level propagation inside neural-network operators.

Unlike CADNA implementations that change the floating-point unit rounding mode, \texttt{noisyfloat} emulates quantization elementwise with backend array operations. For each array element, the software determines the two representable endpoints of the configured format, selects the endpoint associated with the sampled directed-rounding mode, and reconstructs an array of the original backend type. The available abstractions are \texttt{NumpyBackend}, \texttt{TorchBackend}, \texttt{JaxBackend}, and \texttt{TensorFlowBackend}, accessed through \texttt{BackendOps} and \texttt{get\_backend}. NumPy arrays, PyTorch tensors, JAX arrays, and TensorFlow tensors therefore remain in their respective ecosystems during quantization. The PyTorch path uses device-specific random generators, allowing randomized rounding on CPU or CUDA tensors without changing the global hardware rounding mode. Public statistics are exposed in NumPy-compatible form where appropriate, while neural-network tensor representatives remain backend tensors.

\begin{figure}[t]
\centering
\begin{pythoncode}[label={lst:configuration}]{Configuration interface}
from noisefloat import configure, get_config

configure(
    backend="torch",          # "numpy" | "torch" | "jax" | "tensorflow"
    exp_bits=8,               # exponent bits of the simulated format
    sig_bits=23,              # significand bits of the simulated format
    n_samples=3,              # stochastic samples per logical value
    random_state=42,          # reproducible stochastic rounding
    confidence=0.95,          # Student-t confidence level
    diagnostics_level="off", # enable "summary"/"detailed" for source attribution
    kernel_report_detail="summary",
)

cfg = get_config(); print(cfg.n_samples)
\end{pythoncode}
\caption{Configuration example for selecting backend-native stochastic quantization parameters.}
\end{figure}

Mathematical functions such as \texttt{sqrt}, \texttt{exp}, \texttt{log}, \texttt{where}, \texttt{dot}, \texttt{matmul}, \texttt{sum}, \texttt{mean}, and \texttt{norm} dispatch through the backend abstraction. User-defined numerical operators can be lifted through the same sample-wise principle by applying a function independently to each stochastic sample and quantizing the resulting sample batch.

\subsection{Automatic Differentiation and Straight-Through Estimation}

Gradient-based training requires stochastic quantization to coexist with automatic differentiation. The differentiable variant, \texttt{NFloatSTE}, uses the same stochastic quantization in the forward pass,
\begin{equation}
Y = Q_r(X),
\end{equation}
but approximates the derivative of quantization by the identity map in the backward pass,
\begin{equation}
\frac{\partial Q_r(x)}{\partial x}\approx I.
\end{equation}
Equivalently, for a scalar loss $\mathcal{L}$, the backward pass uses
\begin{equation}
\frac{\partial \mathcal{L}}{\partial x}
\approx
\frac{\partial \mathcal{L}}{\partial Q_r(x)}.
\end{equation}
The identity straight-through estimator is a biased surrogate rather than the derivative of quantization, but it is widely used to train networks containing discrete or quantized operations because it preserves a usable gradient path \cite{bengio2013estimating}. Here it is used only as a compatibility mechanism for an instrumented forward pass; it does not certify the correctness of the resulting gradient. The implementation provides backend-specific paths where the framework supports them: PyTorch uses \texttt{torch.autograd.Function}, JAX uses custom vector-Jacobian products, and TensorFlow uses \texttt{tf.custom\_gradient}. NumPy supports stochastic arithmetic and diagnostics but has no native automatic-differentiation path.

The reported significant digits describe stochastic forward values at selected operators, whereas gradients are propagated through the host autodiff system's surrogate. In the experiments reported here, parameter optimization and task-loss computation remain on the original deterministic path; a synchronized shadow model performs the stochastic diagnostic pass, as specified in Eq.~\eqref{eq:shadow-evaluation}. Thus, the loss algorithm that determines the optimizer update is unchanged.

\subsection{Neural-Network Operator Instrumentation}

The deep-learning interface treats layers and functions as graph-level operators. Instrumentation is enabled by replacing a layer with its \texttt{NFloat}-prefixed wrapper or decorating a function with \texttt{nfloat\_operator}; wrappers are available for linear layers, convolutions, RNN/GRU/LSTM modules, activations, normalizations, pooling, softmax and log-softmax, embeddings, losses, multi-head attention, transformer layers, matrix multiplication, reductions, and user-defined operators. A wrapper does not imply that every machine instruction inside the backend kernel is randomly rounded. Operator-boundary wrappers quantize the operator output, whereas arithmetic-level wrappers intercept only the supported PyTorch-visible primitives described below. We use \emph{operator} for a mathematical or graph-level computation and \emph{backend kernel} for its low-level implementation. \texttt{KernelReport} and its \texttt{kernel\_name} field retain ``kernel'' only for compatibility with the software API.

For an instrumented operator node $v$ and execution phase $p\in\{\text{training},\text{validation},\text{inference}\}$, the transformed execution computes
\begin{equation}
X_v=\widehat{f}_v(X_{u_1},\ldots,X_{u_m};\Theta_v),
\qquad
\overline{x}_v=\operatorname{mean}_s(X_v),
\qquad
D_v=C_{\overline{x}_v}.
\end{equation}
A structured report can be viewed as a map
\begin{equation}
\mathcal{R}(v,p,t)=
\left(
\operatorname{name}(v),p,t,
\operatorname{avg}(D_v),
\operatorname{min}(D_v),
\operatorname{max}(D_v),
\operatorname{stable}(D_v),
\operatorname{meta}(v)
\right),
\end{equation}
where $t$ denotes the current iteration and $\operatorname{meta}(v)$ records backend and shape metadata. Given a configured digit threshold $d_{\min}$, $\operatorname{stable}(D_v)$ is true precisely when all reported digit values are finite, no exceptional value is present, and the selected aggregate digit statistic is at least $d_{\min}$. This report abstraction connects the stochastic computation to the engineering task of localizing unstable operators.

The standard PyTorch interface is built around \texttt{NFloatTensor}, \texttt{NFloatModule}, and layer wrappers such as \texttt{NFloatLinear}, \texttt{NFloatConv2d}, \texttt{NFloatLayerNorm}, \texttt{NFloatSoftmax}, \texttt{NFloatRNN}, \texttt{NFloatGRU}, \texttt{NFloatLSTM}, and \texttt{NFloatMultiheadAttention}. Each instrumented operator receives stochastic tensor inputs, evaluates each sample through the corresponding backend operator, stacks the outputs, applies stochastic quantization at the operator boundary, and returns an \texttt{NFloatTensor}. The representative tensor is the sample mean, exposed as \texttt{value}, while elementwise and aggregate significant digits are available through \texttt{digits}, \texttt{avg\_digits}, and \texttt{min\_digits}.

The arithmetic-level path uses \texttt{ArithmeticNFloatModule} and wrappers such as \texttt{ArithmeticNFloatLinear}, \texttt{ArithmeticNFloatLayerNorm}, \texttt{ArithmeticNFloatSoftmax}, \texttt{ArithmeticNFloatGRU}, \texttt{ArithmeticNFloatLSTM}, and \texttt{ArithmeticNFloatMultiheadAttention}. In this mode, \texttt{NFloatTensor} values propagate through supported Python-visible PyTorch primitives, so rounding is applied after exposed arithmetic operations, reductions, normalizations, recurrent operations, attention operations, and activations. The guarantee is limited to those intercepted primitives; arithmetic fused within a backend kernel is not overloaded. GEMM-like backend kernels are treated as trusted operators and use the data-perturbation safeguard below when their stochastic operands are insufficiently separated. Thus, operator-boundary analysis provides graph-aligned localization, whereas arithmetic-level analysis provides finer but still Python-visible instrumentation.

More precisely, let $G(A,B)=AB$ denote a GEMM-like operator such as a matrix multiplication, linear layer, or convolution lowered to matrix multiplication. In the arithmetic-level mode, stochastic operands $A=(A^{(1)},\ldots,A^{(s)})$ and $B=(B^{(1)},\ldots,B^{(s)})$ are first tested for sample separation. For each tensor entry $j$, define
\begin{equation}
\rho_j(A)=
\frac{\operatorname{std}_{i}\!
\left(A^{(i)}_j\right)}
{\max\!\left(|\overline{A}_j|,
\frac{1}{s}\sum_{i=1}^{s}|A^{(i)}_j|,\epsilon\right)},
\label{eq:gemm_sample_separation}
\end{equation}
with an analogous definition for $\rho_j(B)$. Here $\epsilon$ is a positive safeguard that prevents division by zero when both the sample mean and mean magnitude vanish. Given the configured threshold $\eta$---currently $\eta=2^{-p}$ by default for a simulated significand precision $p$---entries satisfying $\rho_j<\eta$ are considered insufficiently separated. For those entries, \texttt{noisyfloat} applies a controlled perturbation
\begin{equation}
\widetilde{A}^{(i)}_j =
Q_r\!\left(A^{(i)}_j + \xi^{(i)}_j\,\eta\,
\max\!\left(|\overline{A}_j|,
\frac{1}{s}\sum_{k=1}^{s}|A^{(k)}_j|,\epsilon\right)\right),
\qquad
\xi^{(i)}_j\sim\mathcal{U}[-1,1],
\label{eq:gemm_data_perturbation}
\end{equation}
and leaves already separated entries unchanged. The GEMM-like operator is then evaluated sample-wise as
\begin{equation}
\widehat{G}(A,B)=
\left(Q_r\!\left(G(\widetilde{A}^{(i)},\widetilde{B}^{(i)})\right)\right)_{i=1}^{s}.
\label{eq:gemm_lifted_perturbed}
\end{equation}
This formulation preserves CESTAC propagation through the surrounding graph while treating the GEMM computation itself as a trusted backend operator under a data-perturbation safeguard. Deterministic parameters, such as ordinary learned weights, are broadcast across samples without perturbation; users who require stochastic weights can pass them explicitly as \texttt{NFloatTensor} values.

\paragraph{Approximation induced by GEMM data perturbation.
The GEMM safeguard follows data-perturbed numerical validation for dense linear algebra: a highly optimized matrix multiplication is treated as a trusted backward-stable operator, and variability is supplied through perturbed inputs rather than by replacing each internal multiply-add \citep{jezequel2020trustworthy, jezequel2024innerproducts}. This is not instruction-level CESTAC inside GEMM. The validity and perturbation amplitude required for probabilistic inner-product validation are analyzed by Jézéquel and Mary \cite{jezequel2024innerproducts}; we rely on that analysis rather than restating its results as new lemmas. Consequently, GEMM reports must be interpreted as operator-level, data-perturbation estimates whose reliability depends on the hypotheses of the cited analysis.} Following \cite[Chp. 4]{higham2002accuracy}, we have Lemma~\ref{lem:gemm-perturbation-forward}:
\begin{lemma}[Forward effect of GEMM input perturbation]
\label{lem:gemm-perturbation-forward}
Let $G(A,B)=AB$ and let $\widetilde{A}=A+\Delta A$ and
$\widetilde{B}=B+\Delta B$. For any submultiplicative norm,
\begin{equation}
\left\|\widetilde{A}\widetilde{B}-AB\right\|
\le
\|A\|\,\|\Delta B\|
+
\|\Delta A\|\,\|B\|
+
\|\Delta A\|\,\|\Delta B\|.
\label{eq:gemm_perturbation_forward_bound}
\end{equation}
If $\|\Delta A\|\le \eta \chi_A\|A\|$ and
$\|\Delta B\|\le \eta \chi_B\|B\|$, then, whenever $AB\ne0$,
\begin{equation}
\frac{\left\|\widetilde{A}\widetilde{B}-AB\right\|}{\|AB\|}
\le
\mu(A,B)\,
\eta\left(\chi_A+\chi_B+\eta\chi_A\chi_B\right),
\qquad
\mu(A,B)=\frac{\|A\|\,\|B\|}{\|AB\|}.
\label{eq:gemm_relative_perturbation_bound}
\end{equation}
\end{lemma}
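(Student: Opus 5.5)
The plan is to expand the perturbed product algebraically and then apply the triangle inequality together with submultiplicativity. Writing $\widetilde{A}=A+\Delta A$ and $\widetilde{B}=B+\Delta B$ and distributing, the first step is the exact identity
\begin{equation*}
\widetilde{A}\widetilde{B}-AB = A\,\Delta B + \Delta A\, B + \Delta A\,\Delta B .
\end{equation*}
This uses only bilinearity of the matrix product, so it holds for any compatible shapes, including a convolution lowered to matrix multiplication. Taking norms and applying the triangle inequality to the three terms, then using $\|XY\|\le\|X\|\,\|Y\|$ on each, gives Eq.~\eqref{eq:gemm_perturbation_forward_bound} directly.

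For the relative bound, I would substitute the hypotheses $\|\Delta A\|\le\eta\chi_A\|A\|$ and $\|\Delta B\|\le\eta\chi_B\|B\|$ into the three terms on the right of Eq.~\eqref{eq:gemm_perturbation_forward_bound}. Each term becomes a multiple of $\|A\|\,\|B\|$:
\begin{itemize}
\item the first term is at most $\eta\chi_B\|A\|\|B\|$;
\item the second is at most $\eta\chi_A\|A\|\|B\|$;
\item the cross term is at most $\eta^2\chi_A\chi_B\|A\|\|B\|$.
\end{itemize}
Factoring out $\eta\|A\|\|B\|$ leaves $\eta\|A\|\|B\|(\chi_A+\chi_B+\eta\chi_A\chi_B)$. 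When $AB\neq0$, $\|AB\|>0$ because a norm vanishes only at zero. Dividing by $\|AB\|$ then produces the factor $\mu(A,B)=\|A\|\|B\|/\|AB\|$ and yields Eq.~\eqref{eq:gemm_relative_perturbation_bound}.

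No step is a genuine obstacle; the argument is the standard first-order perturbation bound of \cite[Chp. 4]{higham2002accuracy}, kept exact by retaining the second-order term. The only point needing care is the meaning of "submultiplicative" when $A$, $B$ and $AB$ have different shapes. I would state explicitly that the norm is a family of norms on the relevant matrix spaces satisfying $\|XY\|\le\|X\|\|Y\|$ for all conformable $X,Y$, for example the Frobenius norm or a subordinate $p$-norm. I would also note that $\mu(A,B)\ge1$ by submultiplicativity, so the bound is never sharper than the input perturbation level $\eta(\chi_A+\chi_B)$; this confirms it is a conditioning-type statement rather than a claim of error reduction.
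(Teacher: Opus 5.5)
Your proposal is correct and follows the paper's proof exactly: expand $\widetilde{A}\widetilde{B}-AB=A\,\Delta B+\Delta A\,B+\Delta A\,\Delta B$, apply the triangle inequality and submultiplicativity, then substitute the relative bounds and divide by $\|AB\|$. Your side remarks are sound but not needed for the argument: spelling out what ``submultiplicative'' means across conformable shapes, and noting that $\mu(A,B)\ge 1$.
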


\begin{proof}
Expanding the perturbed product gives
\[
\widetilde{A}\widetilde{B}-AB
=A\Delta B+\Delta A B+\Delta A\Delta B.
\]
The triangle inequality and submultiplicativity give
Eq.~\eqref{eq:gemm_perturbation_forward_bound}. Substituting the
relative perturbation bounds for $\Delta A$ and $\Delta B$ and dividing
by $\|AB\|$ gives Eq.~\eqref{eq:gemm_relative_perturbation_bound}.
\end{proof}

Lemma~\ref{lem:gemm-perturbation-forward} gives a local interpretation of
\texttt{data\_perturbation\_threshold}. With the default
$\eta=2^{-p}$ for simulated significand precision $p$, the additional
input perturbation is at the configured significand scale. Its output effect is small
when the matrix product is well conditioned in the sense that
$\mu(A,B)$ is moderate, and it can be amplified when the product itself is
ill conditioned or cancellation dominated. This is the desired behavior
for diagnostics: the perturbation should remain negligible for stable
GEMM-like operators and become visible when the product is sensitive to
small input changes.

The following bound is therefore used only to interpret the sensitivity of the
CESTAC significant-digit estimator to the additive perturbation induced by the
surrogate; it is not a replacement for the probabilistic validation theory of
the underlying GEMM-like operator.

\begin{theorem}[Effect on the significant-digit estimator]
\label{thm:gemm_digit_bias}
Consider one scalar coordinate of an ideal stochastic GEMM output
$Z=(z^{(1)},\ldots,z^{(s)})$ and the corresponding data-perturbed
surrogate $\widetilde{Z}=Z+E$, where
$E=(e^{(1)},\ldots,e^{(s)})$. Let $\overline{z}$ and $\sigma_Z$ denote
the sample mean and sample standard deviation of $Z$, and let
$\overline{e}$ and $\sigma_E$ be the corresponding quantities for $E$.
If $|\overline{e}|<|\overline{z}|$ and $\sigma_E<\sigma_Z$, then the
absolute change in the CESTAC digit estimate satisfies
\begin{equation}
\left|
C_{\overline{\widetilde{Z}}}
-
C_{\overline{Z}}
\right|
\le
\log_{10}
\frac{|\overline{z}|+|\overline{e}|}
     {|\overline{z}|-|\overline{e}|}
+
\log_{10}
\frac{\sigma_Z+\sigma_E}
     {\sigma_Z-\sigma_E}.
\label{eq:digit_bias_bound}
\end{equation}
\end{theorem}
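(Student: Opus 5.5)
The proof rests on the definition of the digit estimator in Eq.~\eqref{eq:significant_digit}. Since $\widetilde{Z}=Z+E$ sample-wise, the sample mean is additive: $\overline{\widetilde{Z}}=\overline{z}+\overline{e}$. Applying Eq.~\eqref{eq:significant_digit} to both $Z$ and $\widetilde{Z}$ with the same $s$ and $\tau_\beta$, the constant factor $\sqrt{s}/\tau_\beta$ cancels, and
\[
C_{\overline{\widetilde{Z}}}-C_{\overline{Z}}
=\log_{10}\frac{|\overline{z}+\overline{e}|}{|\overline{z}|}
-\log_{10}\frac{\sigma_{\widetilde{Z}}}{\sigma_Z}.
\]
By the triangle inequality, it therefore suffices to bound each of the two logarithms separately by the corresponding term in Eq.~\eqref{eq:digit_bias_bound}.

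\textbf{Mean term.} The hypothesis $|\overline{e}|<|\overline{z}|$ makes $|\overline{z}+\overline{e}|$ strictly positive, so the estimator is well defined. The reverse and ordinary triangle inequalities give
\[
\frac{|\overline{z}|-|\overline{e}|}{|\overline{z}|}\le\frac{|\overline{z}+\overline{e}|}{|\overline{z}|}\le\frac{|\overline{z}|+|\overline{e}|}{|\overline{z}|}.
\]
Hence the absolute value of the mean log is at most
\[
\max\!\Big(\log_{10}\tfrac{|\overline{z}|+|\overline{e}|}{|\overline{z}|},\ \log_{10}\tfrac{|\overline{z}|}{|\overline{z}|-|\overline{e}|}\Big).
\]
Both quantities are at most $\log_{10}\frac{|\overline{z}|+|\overline{e}|}{|\overline{z}|-|\overline{e}|}$, because each is obtained by dropping a factor that is at least $1$ from that ratio.

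\textbf{Spread term.} This is the step needing the only real observation: $\sigma$ is not additive, but it is a seminorm. Indeed, $\sigma_Y=\|Y-\overline{Y}\mathbf{1}\|_2/\sqrt{s-1}$, and centering is linear, so $\widetilde{Z}-\overline{\widetilde{Z}}\mathbf 1=(Z-\overline{z}\mathbf1)+(E-\overline{e}\mathbf1)$. Minkowski's inequality then yields
\[
\sigma_Z-\sigma_E\le\sigma_{\widetilde{Z}}\le\sigma_Z+\sigma_E.
\]
The hypothesis $\sigma_E<\sigma_Z$ ensures $\sigma_{\widetilde{Z}}>0$, so the estimator is finite and the logarithm is defined. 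Repeating the argument from the mean term bounds $|\log_{10}(\sigma_{\widetilde{Z}}/\sigma_Z)|$ by $\log_{10}\frac{\sigma_Z+\sigma_E}{\sigma_Z-\sigma_E}$.

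The main obstacle is minor. One has to check that the implementation's exceptional conventions (identical samples, zero mean, NaN) are excluded. The two strict hypotheses guarantee $\overline{z}\neq0$, $\sigma_Z>0$, and positivity of the perturbed quantities, so Eq.~\eqref{eq:significant_digit} applies literally to both $Z$ and $\widetilde{Z}$. Summing the two bounds completes the argument.
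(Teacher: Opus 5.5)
Your proof is correct and follows essentially the same route as the paper. You cancel the common factor $\sqrt{s}/\tau_\beta$ and bound $|\overline{z}+\overline{e}|$ by the ordinary and reverse triangle inequalities. You then use that the sample standard deviation is a scaled Euclidean norm of the centered sample vector to get $|\sigma_{\widetilde{Z}}-\sigma_Z|\le\sigma_E$, and combine the resulting multiplicative bounds inside the logarithms. Your check that each one-sided logarithm is dominated by $\log_{10}\frac{|\overline{z}|+|\overline{e}|}{|\overline{z}|-|\overline{e}|}$ (and likewise for the spread term) spells out a step the paper leaves implicit.
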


\begin{proof}
The estimator in Eq.~\eqref{eq:significant_digit} differs between
$Z$ and $\widetilde{Z}$ only through the absolute sample mean and the
sample standard deviation; the factors $\sqrt{s}$ and $\tau_\beta$
cancel. The mean satisfies
$|\overline{z}+\overline{e}|\le|\overline{z}|+|\overline{e}|$ and
$|\overline{z}+\overline{e}|\ge|\overline{z}|-|\overline{e}|$. The
sample standard deviation is the Euclidean norm of the centered sample
vector divided by $\sqrt{s-1}$, hence it is Lipschitz with respect to
sample perturbations:
$|\sigma_{\widetilde{Z}}-\sigma_Z|\le\sigma_E$. Therefore
$\sigma_Z-\sigma_E\le\sigma_{\widetilde{Z}}\le\sigma_Z+\sigma_E$.
Combining these upper and lower multiplicative bounds inside the
base-10 logarithm yields Eq.~\eqref{eq:digit_bias_bound}.
\end{proof}

Theorem~\ref{thm:gemm_digit_bias} clarifies when the GEMM surrogate has a
small effect on the digit estimate that would be obtained from a finer
stochastic treatment of the same scalar coordinate. If the perturbation
contribution $E$ is small relative to both the representative signal
$|\overline{z}|$ and the stochastic spread $\sigma_Z$, the digit estimate
changes by only the two logarithmic terms in
Eq.~\eqref{eq:digit_bias_bound}. If $\sigma_Z$ is nearly zero, the bound
correctly becomes uninformative: the original samples have collapsed and the
software intentionally injects significand-scale data perturbations to avoid
reporting a spuriously deterministic GEMM output.

The preceding theorem is local to one GEMM-like output coordinate. In a
neural-network graph, however, several individually backward-stable operators
may be composed with nonlinear activations, normalizations, softmax, attention,
or recurrent updates. The composition need not be backward stable as a whole:
small perturbations introduced at one trusted GEMM-like operator can be
amplified by later operators when the downstream map is ill conditioned or
close to a nonsmooth regime. The following result makes this approximation
gap explicit for the software strategy used in \texttt{noisyfloat}.

\begin{theorem}[Composition-level perturbation gap]
\label{thm:composition_perturbation_gap}
Consider one scalar output coordinate of an ideal arithmetic-level stochastic
execution
\[
Y=(y^{(1)},\ldots,y^{(s)})
\]
and the corresponding execution
\[
\widetilde{Y}=(\widetilde{y}^{(1)},\ldots,\widetilde{y}^{(s)})
\]
in which a set $\mathcal{P}$ of GEMM-like operators is evaluated through the
data-perturbed surrogate in Eq.~\eqref{eq:gemm_lifted_perturbed}. For sample
$i$, let $\Delta_p^{(i)}$ be the perturbation at the output of the trusted
operator $p\in\mathcal{P}$ induced by the input perturbations in
Eq.~\eqref{eq:gemm_data_perturbation}. Suppose that, along the segment joining
the ideal and surrogate trajectories, the downstream subgraph from the output
of $p$ to the selected scalar coordinate is locally $L_p$-Lipschitz. Define
\begin{equation}
b_i =
\sum_{p\in\mathcal{P}} L_p\,\|\Delta_p^{(i)}\|,
\qquad
\alpha=\frac{1}{s}\sum_{i=1}^{s}b_i,
\qquad
\gamma=
\left(\frac{1}{s-1}\sum_{i=1}^{s}b_i^2\right)^{1/2}.
\label{eq:composition_gap_terms}
\end{equation}
Then the representative values satisfy
\begin{equation}
\left|
\overline{\widetilde{Y}}-\overline{Y}
\right|
\le \alpha.
\label{eq:representative_gap_bound}
\end{equation}
Moreover, if $\alpha<|\overline{Y}|$ and $\gamma<\sigma_Y$, where
$\sigma_Y$ is the sample standard deviation of $Y$, then the significant-digit
estimates satisfy
\begin{equation}
\left|
C_{\overline{\widetilde{Y}}}
-
C_{\overline{Y}}
\right|
\le
\log_{10}
\frac{|\overline{Y}|+\alpha}
     {|\overline{Y}|-\alpha}
+
\log_{10}
\frac{\sigma_Y+\gamma}
     {\sigma_Y-\gamma}.
\label{eq:composition_digit_gap_bound}
\end{equation}
\end{theorem}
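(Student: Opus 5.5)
We reduce the statement to Theorem~\ref{thm:gemm_digit_bias} by writing the surrogate output as an additive perturbation of the ideal one. Set $e^{(i)}=\widetilde{y}^{(i)}-y^{(i)}$ and $E=(e^{(1)},\ldots,e^{(s)})$, so that $\widetilde{Y}=Y+E$. The first and main step is a per-sample bound $|e^{(i)}|\le b_i$. The two executions for sample $i$ differ only in the perturbations $\Delta_p^{(i)}$ injected at the outputs of the trusted operators $p\in\mathcal{P}$.

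To obtain this bound, order $\mathcal{P}$ topologically as $p_1,\ldots,p_k$. We then build a chain of hybrid trajectories $T_0,T_1,\ldots,T_k$. Here $T_0$ is the ideal trajectory, and $T_j$ switches on the surrogate perturbations at $p_1,\ldots,p_j$ only, so $T_k$ is the surrogate trajectory. The selected coordinate of $T_j$ and of $T_{j-1}$ differs only through the output of $p_j$, which changes by $\Delta_{p_j}^{(i)}$. The hypothesis that the downstream subgraph from $p_j$ is locally $L_{p_j}$-Lipschitz along the segment joining the trajectories therefore bounds this difference by $L_{p_j}\|\Delta_{p_j}^{(i)}\|$. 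Telescoping over $j$ and applying the triangle inequality gives $|e^{(i)}|\le\sum_p L_p\|\Delta_p^{(i)}\|=b_i$.

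This is where I expect the main difficulty. We must interpret the Lipschitz hypothesis so that it covers these hybrid trajectories: the perturbation at an upstream $p$ also shifts the inputs to a downstream $p'$. I would read the stated hypothesis as applying to each successive hybrid, and accordingly read $\Delta_{p'}^{(i)}$ as the incremental output change attributable to the injected input perturbation at $p'$. I would state this reading explicitly in the proof rather than attempt a general proof for arbitrary nested graphs.

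The remaining steps are short. For the representative bound,
\[
|\overline{\widetilde{Y}}-\overline{Y}|=|\overline{e}|\le \frac{1}{s}\sum_{i=1}^{s}|e^{(i)}|\le\alpha .
\]
For the spread, the sample standard deviation is the centered Euclidean norm divided by $\sqrt{s-1}$, so
\[
\sigma_E\le\Bigl(\tfrac{1}{s-1}\sum_{i=1}^{s}(e^{(i)})^2\Bigr)^{1/2}\le\gamma,
\]
because centering is an orthogonal projection and does not increase the norm. Hence $|\overline{e}|\le\alpha<|\overline{Y}|$ and $\sigma_E\le\gamma<\sigma_Y$, so Theorem~\ref{thm:gemm_digit_bias} applies with $Z=Y$.

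That theorem bounds the digit change by
\[
\log_{10}\frac{|\overline{Y}|+|\overline{e}|}{|\overline{Y}|-|\overline{e}|}+\log_{10}\frac{\sigma_Y+\sigma_E}{\sigma_Y-\sigma_E}.
\]
Both maps $t\mapsto\log_{10}\frac{a+t}{a-t}$ are increasing on $[0,a)$. Replacing $|\overline{e}|$ by $\alpha$ and $\sigma_E$ by $\gamma$ therefore yields Eq.~\eqref{eq:composition_digit_gap_bound}.
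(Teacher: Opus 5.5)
Your proposal is correct and follows the paper's proof essentially step for step. Both arguments use a per-sample telescoping bound $|e^{(i)}|\le b_i$, obtained by switching on the perturbed GEMM-like operators one at a time in topological order, then the bounds $|\overline{e}|\le\alpha$ and $\sigma_E\le\gamma$ (the paper uses that the sample mean minimizes the sum of squared deviations, which is your orthogonal-projection remark), and then Theorem~\ref{thm:gemm_digit_bias}. Your explicit reading of $\Delta_p^{(i)}$ as the incremental output change in the hybrid trajectory, and your monotonicity argument for replacing $|\overline{e}|$ and $\sigma_E$ by $\alpha$ and $\gamma$, make precise two points the paper leaves implicit.
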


\begin{proof}
Fix a stochastic sample $i$. Replace the trusted GEMM-like operators by their
data-perturbed counterparts one at a time, in topological order. When the
operator $p$ is replaced, all previous replacements are held fixed and the
remaining downstream computation is the same in the two trajectories. By the
assumed local Lipschitz property of that downstream map, the change in the
selected scalar output coordinate caused by this replacement is at most
$L_p\|\Delta_p^{(i)}\|$. Summing these changes by the triangle inequality gives
\[
|\widetilde{y}^{(i)}-y^{(i)}|
\le
\sum_{p\in\mathcal{P}} L_p\|\Delta_p^{(i)}\|
=b_i.
\]
Let $e^{(i)}=\widetilde{y}^{(i)}-y^{(i)}$. The representative gap is the mean
of these sample perturbations, hence
\[
|\overline{\widetilde{Y}}-\overline{Y}|
=|\overline{e}|
\le \frac{1}{s}\sum_{i=1}^{s}|e^{(i)}|
\le \alpha.
\]
The sample standard deviation of $E=(e^{(1)},\ldots,e^{(s)})$ satisfies
\[
\sigma_E^2
=
\frac{1}{s-1}\sum_{i=1}^{s}(e^{(i)}-\overline{e})^2
\le
\frac{1}{s-1}\sum_{i=1}^{s}(e^{(i)})^2
\le
\frac{1}{s-1}\sum_{i=1}^{s}b_i^2
=\gamma^2,
\]
because the sample mean minimizes the sum of squared deviations. Applying
Theorem~\ref{thm:gemm_digit_bias} with
$|\overline{e}|\le\alpha$ and $\sigma_E\le\gamma$ yields
Eq.~\eqref{eq:composition_digit_gap_bound}.
\end{proof}

Theorem~\ref{thm:composition_perturbation_gap} separates two sources of error
in the approximation. The factors $\|\Delta_p^{(i)}\|$ are controlled by the
configured perturbation amplitude and by the local magnitude of the GEMM
inputs, whereas the factors $L_p$ describe the sensitivity of the downstream
composition. Thus, even when each trusted GEMM-like operator is backward
stable, the overall graph can still produce a large approximation gap if a
later normalization, softmax, attention, or recurrent update amplifies the
perturbation. Conversely, when the downstream maps are well conditioned and
the perturbations remain small relative to both the representative signal and
the stochastic spread, the data-perturbed surrogate has a provably small effect
on both the representative value and the significant-digit estimate. This is
the regime in which the software reports should be interpreted as efficient
operator-level approximations of a finer stochastic propagation; outside this
regime, large or unstable digit reports are themselves a diagnostic signal
that the chosen composition is numerically sensitive.

\begin{figure}[htp]
\centering
\begin{pythoncode}[label={lst:layers}]{Neural-network operator wrappers}
import torch
from noisefloat import configure
from noisefloat.nn import NFloatLinear, NFloatReLU, NFloatTensor
from noisefloat.nn import get_kernel_reports

configure(backend="torch", n_samples=3, random_state=42)

model = torch.nn.Sequential(
    NFloatLinear(10, 32),
    NFloatReLU(),
    NFloatLinear(32, 2),
)

x = NFloatTensor(torch.randn(8, 10, dtype=torch.float64))
out = model(x)

print(out.value)       # representative mean tensor
print(out.digits)      # element-wise significant digits
print(get_kernel_reports()[-1])
\end{pythoncode}
\caption{PyTorch neural-network wrapper example showing representative tensors, digit fields, and operator reports.}
\end{figure}

The data-perturbation path is therefore used only when stochastic GEMM inputs have insufficient sample separation. Its reports diagnose sensitivity of the trusted operator to the prescribed input perturbations; they must not be interpreted as measurements of every rounding event inside the GEMM implementation.

\subsection{Structured Diagnostics and Iteration-Level Tracking}

Numerical-stability information is exposed as structured software reports. Each instrumented neural-network operator may emit a \texttt{KernelReport} containing the recorded operator name, execution phase, average/minimum/maximum significant digits, number of tensor elements, stability flag, backend metadata, and optional detailed tensors depending on \texttt{kernel\_report\_detail}. The reporting API includes \texttt{record\_kernel}, \texttt{get\_kernel\_reports}, \texttt{clear\_kernel\_reports}, \texttt{print\_kernel\_report}, and \texttt{summary}. These reports make the numerical unit of analysis explicit: they allow a developer to identify which neural-network operators lose significant digits and whether the loss occurs in training, validation, or inference.

Iteration-level capture is provided by \texttt{NFloatIterationTracker}. The tracker runs within the \texttt{nfloat\_analysis} context, which controls automatic conversion from floating tensors to stochastic tensors, optional STE rounding, report-name prefixes, execution mode, and user metadata. For an epoch $e$ and iteration $t$, the tracker records a multiset of reports
\begin{equation}
\mathcal{H}_{e,t}=\{\mathcal{R}(v,p,t): v\in\mathcal{V}_{\mathrm{inst}}\},
\end{equation}
where $\mathcal{V}_{\mathrm{inst}}$ is the set of instrumented operators reached during that execution. The exported trace is therefore a time series over operators,
\begin{equation}
\left(\mathcal{H}_{0,0},\mathcal{H}_{0,1},\ldots,\mathcal{H}_{E,T_E}\right),
\end{equation}
which can be joined with task-level metrics such as loss, accuracy, token-level reliability, BLEU, or reconstruction error. This mechanism is designed for numerical debugging of long-running learning pipelines, where a single final accuracy or loss value is insufficient to localize instability.

\begin{figure}
\centering
\begin{pythoncode}[label={lst:iteration-tracker}]{Iteration-level operator tracing}
import torch
from noisefloat.nn import NFloatIterationTracker

tracker = NFloatIterationTracker(task_name="classifier")
criterion = torch.nn.CrossEntropyLoss()

for epoch in range(num_epochs):
    for step, (inputs, targets) in enumerate(loader):
        optimizer.zero_grad()
        logits = model(inputs)              # ordinary training path
        loss = criterion(logits, targets)
        loss.backward()
        optimizer.step()

        with tracker.iteration(
            epoch=epoch,
            iteration=step,
            split="train",
            global_iteration=epoch * len(loader) + step,
            use_ste=True,
            metadata={"backend": "torch"},
        ):
            nfloat_logits = nfloat_model(inputs)
            _ = nfloat_logits.value        # representative prediction

tracker.export("nfloat_trace")
\end{pythoncode}
\caption{Training-loop tracing example for collecting operator reports at each epoch and iteration.}
\end{figure}
Listing~\ref{lst:iteration-tracker} shows the corresponding training-loop pattern. The original optimizer and deterministic loss remain in the user program, while the stochastic analysis context captures the reports emitted by wrapped operators at each iteration. This idiom is deliberately small: it demonstrates how numerical-stability instrumentation is added without restructuring the whole training pipeline.

\subsection{Design Philosophy and Application Value}

The method complements existing stochastic-arithmetic systems rather than replacing them. Its main distinction is software integration: CESTAC-style stochastic arithmetic is available directly inside Python numerical and tensor programs, through backend-native abstractions rather than external hardware rounding-mode control. The same conceptual transformation applies to NumPy, PyTorch, JAX, and TensorFlow; differentiable backends can use the STE path to preserve compatibility with automatic differentiation; and operator-level reports align the diagnostic output with the structure of modern deep-learning software.

The transformed program estimates the numerical reliability of the selected Python-visible operations and instrumented neural-network operators. It does not claim to reproduce every instruction-level rounding event inside vendor libraries, CUDA kernels, or fused backend implementations. Consequently, the diagnostic granularity is controlled by the chosen instrumentation boundary. This boundary is deliberate: it provides a practical level of localization for deep-learning developers while retaining a mathematically defined stochastic semantics for each selected operator.

This design is compositional. Predefined wrappers cover many common neural-network operators, and custom functions can be lifted with \texttt{NFloatOperator} or \texttt{nfloat\_operator}. The reports can be aggregated over iterations to study how numerical reliability evolves during training or inference. The framework is therefore useful for detecting catastrophic cancellation, rounding-error amplification, loss of significant digits, and operator-level sensitivity under reduced or mixed precision. In practice, these diagnostics can guide decisions about where to use higher precision, where to apply stable reformulations, and where to replace or reconfigure numerically fragile neural-network operators.

\section{Experimental Simulations}\label{sec:experiments}
\subsection{Experimental Setup}

Our simulations were performed on a single computing node equipped with one NVIDIA A100 GPU (80~GB memory, configured as a \texttt{7g.80gb} MIG instance), eight CPU cores, and 64~GB of RAM. The deep-learning operator experiments use PyTorch 2.12.0 with CUDA 13.0. Unless stated otherwise, deterministic modules and image-valued inputs use FP32. For numerical analysis, a separate shadow model copies the current deterministic parameters and evaluates three synchronous stochastic representatives in FP32 precision at confidence level $0.95$. Random seeds control data ordering, model initialization, and randomized quantization. CUDA is selected when available, except where a separate CPU device is explicitly used to bound the memory footprint of stochastic analysis. In addition, the mathematical formulation in this section is self-contained.

In the numerical experiments, we simulate the \texttt{noisefloat} in five sessions. The first one examines examples of classical numerical computations and deep learning operators in a single pass; the second session conducts pathology study on the unstable learning operators in training and compare it with the stable one; the second and third sessions conduct pathology study to examine the practical task of training with controlled operator with injected numerical instabilities; the last two sessions separately examine the relationship between accuracy and independent deep learning operators on activations and normalization, respectively.

The deep-learning experiments separate parameter optimization from numerical instrumentation. Let $F_{\theta_t}$ denote the deterministic network at iteration $t$, $\mathcal{L}$ its task loss, and $\mathcal{U}$ the optimizer update. The training path computes
\begin{equation}
\theta_{t+1}
=
\mathcal{U}\!\left(\theta_t,
\nabla_{\theta}\mathcal{L}(F_{\theta_t}(x_t),y_t)\right).
\label{eq:deterministic-training-update}
\end{equation}
After this update, \texttt{NFloatShadowModel} copies both parameters and persistent buffers from the deterministic model into an arithmetic-level stochastic model $\widehat{F}_{\widetilde{\theta}_{t+1}}$ and evaluates a diagnostic forward pass,
\begin{equation}
\widetilde{\theta}_{t+1}\leftarrow\theta_{t+1},
\qquad
Y_t=\widehat{F}_{\widetilde{\theta}_{t+1}}(X_t),
\qquad
X_t=(x_t^{(1)},x_t^{(2)},x_t^{(3)}).
\label{eq:shadow-evaluation}
\end{equation}

At every analysis point, the shadow state is therefore an exact copy of the latest deterministic state, and stochastic variability cannot alter the optimization trajectory. In the classification runners, deterministic batch accuracy comes from the pre-update training forward pass in Eq.~\eqref{eq:deterministic-training-update}; representative accuracy and operator reports come from the post-update shadow pass in Eq.~\eqref{eq:shadow-evaluation}. Because the two accuracies use different parameter states, they are reported only as task monitors and are not interpreted as a rounding-error estimate. All operator digit estimates correspond to the synchronized post-update state. The arithmetic-level entry points set \texttt{NOISYFLOAT\_NFLOAT\_MODE=arithmetic}; supported PyTorch-visible arithmetic, activations, normalizations, recurrent operations, and attention operations therefore follow the semantics in Section~\ref{sec:method}, while GEMM-like operations use Eq.~\eqref{eq:gemm_lifted_perturbed}. Detailed event tracing is disabled in these experiments.

\subsection{Controlled Arithmetic and Operator Instability}
\label{sec:exp-controlled-instability}

This experiment is  to verify the numerical stability of the numerical computations and common deep learning layers using our software. Its purpose is to test a falsifiable implication of the method: when two computations implement related tasks but one contains a known numerical hazard, the hazardous construction should exhibit lower CESTAC significant digits or a non-finite stochastic output. We consider two common mechanisms. For a subtraction $a-b$, the cancellation ratio
\begin{equation}
\rho_{\ominus}(a,b)
=
\frac{|a-b|}{|a|+|b|}
\label{eq:cancellation-ratio}
\end{equation}
approaches zero when large, nearly equal operands produce a small residual. More generally, for a differentiable operator $f$, the local relative condition measure
\begin{equation}
\kappa_f(x)
=
\frac{\|x\|\,\|J_f(x)\|}{\|f(x)\|}
\label{eq:operator-condition-measure}
\end{equation}
becomes large when a small perturbation of the input can induce a large relative perturbation of the output. The controlled cases below deliberately vary $\rho_{\ominus}$, $\kappa_f$, or the representability of an intermediate result while keeping the semantic task as close as possible.

\paragraph{Classical arithmetic controls.}
The first component uses the NumPy backend and scalar operator overloading through \texttt{NFloat}, focusing on classical examples from numerical analysis. It evaluates stochastic variables in simulated FP32 and FP64 formats with $s=3$, confidence $0.95$, and seed 2026. Four cases compare a cancellation-prone expression $f^{\mathrm{u}}$ with an algebraically or intrinsically stabilized expression $f^{\mathrm{s}}$:
\begin{align}
f_{\exp}^{\mathrm{u}}(x)
&= \exp(x)-1,
&
f_{\exp}^{\mathrm{s}}(x)
&= \operatorname{expm1}(x),
& x&=10^{-11},
\label{eq:control-expm1}
\\
f_{\sqrt{\phantom{x}}}^{\mathrm{u}}(x)
&= \sqrt{x+1}-\sqrt{x},
&
f_{\sqrt{\phantom{x}}}^{\mathrm{s}}(x)
&= \frac{1}{\sqrt{x+1}+\sqrt{x}},
& x&=10^{14},
\label{eq:control-sqrt}
\\
f_{\cos}^{\mathrm{u}}(x)
&= 1-\cos(x),
&
f_{\cos}^{\mathrm{s}}(x)
&= 2\sin^2(x/2),
& x&=10^{-6},
\label{eq:control-cos}
\\
f_{\mathrm{quad}}^{\mathrm{u}}(b)
&= \frac{-b-\sqrt{b^2-4}}{2},
&
f_{\mathrm{quad}}^{\mathrm{s}}(b)
&= \frac{2}{-b+\sqrt{b^2-4}},
& b&=-10^6.
\label{eq:control-quadratic}
\end{align}
In Eqs.~\eqref{eq:control-expm1}--\eqref{eq:control-quadratic}, the unstable form subtracts quantities whose leading digits agree, whereas the stable form avoids that subtraction or delegates it to a compensated intrinsic. The quadratic pair~\eqref{eq:control-quadratic} computes the small root of $x^2+bx+1=0$; the stable form follows from the Vi\`ete relation $x_1x_2=1$ \citep{Goldberg1991FloatingPoint}.

The suite then increases structural conditioning. The orthogonal control evaluates a component of
\begin{equation}
Q(\theta)
=
\begin{bmatrix}
\cos\theta & -\sin\theta\\
\sin\theta & \cos\theta
\end{bmatrix},
\qquad
\theta=\frac{\pi}{4},
\qquad
\kappa_2(Q)=1.
\label{eq:control-orthogonal}
\end{equation}
For the Hilbert case, $H\in\mathbb{R}^{12\times12}$ is defined by
\begin{equation}
H_{ij}=\frac{1}{i+j+1},
\qquad
b=H\mathbf{1},
\qquad
H\widehat{x}=b,
\label{eq:control-hilbert}
\end{equation}
with zero-based $i,j$ and Gaussian elimination without pivoting; the monitored component has exact value $x_6=1$. For the clustered Vandermonde case,
\begin{equation}
V_{ij}=t_i^j,
\qquad
t_i=0.99+\frac{0.02i}{7},
\qquad
b=Vc,
\qquad
c=(1,\ldots,8)^\top,
\label{eq:control-vandermonde}
\end{equation}
and the monitored component is $c_4=5$. The tightly clustered nodes make the columns of $V$ nearly dependent, so elimination can amplify input and rounding perturbations. Finally, the Rump polynomial
\begin{equation}
P(x,y)=9x^4-y^4+2y^2
\label{eq:control-rump}
\end{equation}
is evaluated at $(10864,18817)$ and $(1/3,2/3)$. The first input combines very large terms into the exact value one and is therefore cancellation dominated; the second has the benign exact value $65/81$. These controls cover stable reformulation, cancellation, ill-conditioned linear algebra, and polynomial error amplification. For each case, the experiment records the representative value, stochastic spread, significant digits, deviation from the case-specific reference expression, and summary counts of diagnosed instability sources. The verification runner uses a six-digit stability criterion.

\begin{table*}[t]
  \centering
  \caption{Classical numerical-stability benchmark results under stochastic
  arithmetic. Each row reports the estimated number of significant digits for
  the corresponding formulation.}
  \label{tab:classic-stability-suite}
  \scriptsize

  \noindent\begin{minipage}[t]{0.49\textwidth}
    \centering
    \textbf{(a) Single precision \((e,p)=(8,23)\)}
    \label{tab:classic-stability-single}\par\medskip

    \resizebox{\linewidth}{!}{%
    \begin{tabular}{llcr}
      \toprule
      Case & Variant & Type & Sig. digits \\
      \midrule
      expm1
        & \(\exp(x)-1\)
        & U & 0.000 \\
      expm1
        & \(\operatorname{expm1}(x)\)
        & S & 6.905 \\
      sqrt diff.
        & \(\sqrt{x+1}-\sqrt{x}\)
        & U & 0.000 \\
      sqrt diff.
        & rationalized
        & S & 6.352 \\
      \(1-\cos(x)\)
        & direct
        & U & 0.000 \\
      \(1-\cos(x)\)
        & \(2\sin^2(x/2)\)
        & S & 5.895 \\
      quadratic root
        & direct formula
        & U & 0.000 \\
      quadratic root
        & Vieta formula
        & S & 6.231 \\
      orthogonal map
        & rotation
        & S & 6.194 \\
      Hilbert solve
        & Gaussian, no pivot
        & U & 0.000 \\
      Vandermonde
        & clustered nodes, no pivot
        & U & 0.000 \\
      Rump \(P_1\)
        & \(P_1\)
        & U & 0.000 \\
      Rump \(P_2\)
        & \(P_2\)
        & S & 6.973 \\
      \bottomrule
    \end{tabular}}
  \end{minipage}%
  \hfill
  \begin{minipage}[t]{0.49\textwidth}
    \centering
    \textbf{(b) Double precision \((e,p)=(11,52)\)}
    \label{tab:classic-stability-double}\par\medskip

    \resizebox{\linewidth}{!}{%
    \begin{tabular}{llcr}
      \toprule
      Case & Variant & Type & Sig. digits \\
      \midrule
      expm1
        & \(\exp(x)-1\)
        & U & 4.196 \\
      expm1
        & \(\operatorname{expm1}(x)\)
        & S & 15.000 \\
      sqrt diff.
        & \(\sqrt{x+1}-\sqrt{x}\)
        & U & 0.781 \\
      sqrt diff.
        & rationalized
        & S & 14.973 \\
      \(1-\cos(x)\)
        & direct
        & U & 2.773 \\
      \(1-\cos(x)\)
        & \(2\sin^2(x/2)\)
        & S & 14.638 \\
      quadratic root
        & direct formula
        & U & 3.656 \\
      quadratic root
        & Vieta formula
        & S & 14.978 \\
      orthogonal map
        & rotation
        & S & 14.787 \\
      Hilbert solve
        & Gaussian, no pivot
        & U & 0.142 \\
      Vandermonde
        & clustered nodes, no pivot
        & U & 0.000 \\
      Rump \(P_1\)
        & \(P_1\)
        & U & 0.000 \\
      Rump \(P_2\)
        & \(P_2\)
        & S & 15.000 \\
      \bottomrule
    \end{tabular}}
  \end{minipage}

  \vspace{0.25em}

  \footnotesize
  U denotes a cancellation-prone or ill-conditioned formulation, whereas S
  denotes its numerically stable or well-conditioned counterpart.
\end{table*}

Table~\ref{tab:classic-stability-suite} shows that the examined numerical validity described by the significant digits behaves as intended. In the FP32 validation, every cancellation-prone or ill-conditioned formulation is reported with zero significant digits, whereas the stabilized counterparts retain approximately six to seven significant digits, approximately the same number of significant digits as FP32. In the FP64 validation, the stabilized formulations reach the estimator ceiling of roughly 15 digits, while the unstable formulations remain far below 6 digits. The Rump polynomial is especially diagnostic{--}the benign input $P(1/3,2/3)=65/81$ is stable, while the large-integer input loses all reliable digits. These results demonstrate the validity of our implementation while serving as a calibration check that the scalar stochastic-arithmetic path distinguishes algebraically equivalent but numerically different formulations.

\paragraph{Controlled neural-network operators.}
The second component is examined with arithmetic-level stochastic-validation wrappers and constructs six matched stable/unstable pairs. Let
\begin{equation}
Y_{g,v,r}
=
\widehat{f}_{g,v}\!\left(X_{g,v,r}\right),
\qquad
v\in\{\mathrm{s},\mathrm{u}\},
\label{eq:controlled-operator-lift}
\end{equation}
denote the stochastic output for operator group $g$, variant $v$, and trial $r$. The pairs are defined as follows.

\emph{Linear transformation.}
The stable map uses an orthogonal matrix obtained by QR factorization,
\begin{equation}
y^{\mathrm{s}}=Qx,
\qquad
Q^\top Q=I,
\qquad
\kappa_2(Q)=1.
\label{eq:controlled-linear-stable}
\end{equation}
The unstable map uses $w=(1,-1)$ and inputs $x=(M+\delta,M)^\top$, where $M=10^8$ and $\delta\in\{1,2,4\}$:
\begin{equation}
y^{\mathrm{u}}
=
w^\top x
=
(M+\delta)-M
=
\delta,
\qquad
\rho_{\ominus}(M+\delta,M)
\approx
\frac{\delta}{2M}.
\label{eq:controlled-linear-unstable}
\end{equation}
The semantic operation remains a linear layer, but the second construction exposes a small signal through cancellation of two large activations.

\emph{One-dimensional convolution.}
For a moderate-amplitude sinusoidal signal with small additive noise and a local window $x_{j:j+2}$, the stable filter is the convex smoothing stencil
\begin{equation}
h^{\mathrm{s}}=\left(\frac14,\frac12,\frac14\right),
\qquad
y_j^{\mathrm{s}}=\frac14x_j+\frac12x_{j+1}+\frac14x_{j+2}.
\label{eq:controlled-conv-stable}
\end{equation}
The unstable filter is a first difference,
\begin{equation}
h^{\mathrm{u}}=(1,-1),
\qquad
x_j=M+r_j,
\qquad
y_j^{\mathrm{u}}=r_j-r_{j+1},
\label{eq:controlled-conv-unstable}
\end{equation}
with $M=10^8$. The common offset is irrelevant in exact arithmetic but dominates the represented operands, so the convolution becomes a sequence of cancellation-prone subtractions.

\emph{Layer normalization.}
For feature dimension $d=32$, both variants compute
\begin{equation}
\mu(x)=\frac{1}{d}\sum_{j=1}^{d}x_j,
\qquad
\sigma^2(x)=\frac{1}{d}\sum_{j=1}^{d}(x_j-\mu)^2,
\qquad
\operatorname{LN}_{\varepsilon}(x)_j
=
\frac{x_j-\mu}{\sqrt{\sigma^2+\varepsilon}}.
\label{eq:controlled-layernorm}
\end{equation}
The stable input has $x_j\sim\mathcal{N}(0,1)$ and $\varepsilon=10^{-5}$. The unstable input has
\begin{equation}
x_j=1+2^{-22}\xi_j,
\qquad
\xi_j\sim\mathcal{N}(0,1),
\qquad
\varepsilon=10^{-12}.
\label{eq:controlled-layernorm-nearconstant}
\end{equation}
Here, both $x_j-\mu$ and $\sigma^2$ are formed from nearly equal quantities, and perturbations are amplified approximately by $(\sigma^2+\varepsilon)^{-1/2}$.

\emph{Softmax.}
For logits $z\in\mathbb{R}^d$, the stable implementation uses translation invariance,
\begin{equation}
p_j^{\mathrm{s}}
=
\frac{\exp(z_j-m)}{\sum_k\exp(z_k-m)},
\qquad
m=\max_k z_k,
\label{eq:controlled-softmax-stable}
\end{equation}
and the stable implementation of Log-Softmax uses translation invariance:
\begin{equation}
\log p_j^{\mathrm{s}}
=
(z_j-m)
-
\log\left(\sum_k \exp(z_k-m)\right),
\label{eq:controlled-log-softmax-stable}
\end{equation}
whereas the naive implementation computes%\qquad m=\max_k z_k.
\begin{equation}
p_j^{\mathrm{u}}
=
\frac{\exp(z_j)}{\sum_k\exp(z_k)}.
\label{eq:controlled-softmax-unstable}
\end{equation}
The logits lie near $10^3$. Equation~\eqref{eq:controlled-softmax-stable} constrains the largest exponential to one, while Eq.~\eqref{eq:controlled-softmax-unstable} deliberately creates non-finite intermediate exponentials. This pair tests whether the instrumentation distinguishes algebraically equivalent formulations with different intermediate ranges.

\emph{Scaled dot-product attention.}
With one query $q$, two keys $k_1,k_2$, opposing value vectors $v_1,v_2$, and $d=4$, the operator is
\begin{equation}
s_j=\frac{q^\top k_j}{\sqrt{d}},
\qquad
(p_1,p_2)=\operatorname{softmax}(s_1,s_2),
\qquad
o=p_1v_1+p_2v_2.
\label{eq:controlled-attention}
\end{equation}
The stable input gives $(s_1,s_2)=(2,-2)$ and margin $m_s=4$. In the near-tie construction,
\begin{equation}
q=(1,1,1,1),
\quad
k_1=q,
\quad
k_2=(1-2^{-28},1,1,1),
\quad
m_u=s_1-s_2=2^{-29}.
\label{eq:controlled-attention-neartie}
\end{equation}
The small score margin makes the probability allocation sensitive to rounding. Because $v_1$ and $v_2$ have opposing signs, a perturbation of the attention weights is not masked by similar values at the output.

\emph{Reduction.}
The stable reduction sums 256 positive values $a_j\in[1,2]$:
\begin{equation}
S^{\mathrm{s}}=\sum_{j=1}^{256}a_j.
\label{eq:controlled-reduction-stable}
\end{equation}
The unstable reduction sums 256 alternating pairs,
\begin{equation}
S^{\mathrm{u}}
=
\sum_{j=1}^{256}\left(M+(-M+1)\right)
=
256,
\qquad
M=10^8.
\label{eq:controlled-reduction-unstable}
\end{equation}
Although the exact result is moderate, every pair recovers one by subtracting values of magnitude $10^8$, so summation order and rounding determine how much of that residual is retained.

% \paragraph{Detection objective.}
We formulate detection objective as follows. Let $D_{g,v,r}^{\min}$ denote the minimum significant digits reported for the output of variant $v$. The matched-pair hypothesis is
\begin{equation}
\Delta_{g,r}
=
D_{g,\mathrm{s},r}^{\min}
-
D_{g,\mathrm{u},r}^{\min}
>0,
\label{eq:controlled-pair-hypothesis}
\end{equation}
with a non-finite unstable output interpreted as zero reliable digits. Independently of the paired gap, each case is classified by
\begin{equation}
\widehat{u}_{g,v,r}
=
\mathbf{1}\!\left[
D_{g,v,r}^{\min}<\gamma_p
\;\lor\;
\operatorname{nonfinite}(Y_{g,v,r})
\right],
\qquad
\gamma_{23}=3,\quad \gamma_{52}=10,
\label{eq:operator-instability-classifier}
\end{equation}
where $\widehat{u}_{g,v,r}=1$ denotes a detected unstable case. The default execution uses simulated FP32 and three trials with distinct random seeds; FP64 can be selected using the same runner. Detection accuracy, precision, and recall compare Eq.~\eqref{eq:operator-instability-classifier} with the construction labels. Equation~\eqref{eq:controlled-pair-hypothesis} tests relative separation within an operator family, whereas Eq.~\eqref{eq:operator-instability-classifier} tests absolute detection under a precision-dependent digit threshold. The source label attached by this synthetic runner is auxiliary: it is inferred from the controlled construction and observed non-finiteness, while the significant-digit estimate remains the primary experimental endpoint.

\begin{table*}[t]
\centering
\caption{Arithmetic-level stochastic-validation results for deep-learning operators. Each row
reports the average and minimum number of significant digits estimated for
the corresponding operator.}
\label{tab:operator-level-dl-cestac}
\scriptsize

\begin{minipage}[t]{0.48\textwidth}
\centering
\textbf{Single precision-like format ((e,p)=(8,23))}
\vspace{0.3em}

\resizebox{\linewidth}{!}{%
\begin{tabular}{llcrr}
  \toprule
  Category & Operator & Type & Avg. & Min. \\
  \midrule
  softmax
    & naive\_softmax\_overflow
    & U & 0.000 & 0.000 \\
  softmax
    & shifted\_softmax\_large\_logits
    & S & 3.527 & 3.487 \\
  softmax
    & translation\_invariant\_log\_softmax
    & S & 3.666 & 3.620 \\
  normalization
    & layernorm\_regular\_variance
    & S & 4.124 & 3.672 \\
  normalization
    & layernorm\_near\_constant
    & U & 0.000 & 0.000 \\
  attention
    & attention\_separated\_logits
    & S & 6.466 & 6.257 \\
  attention
    & attention\_near\_tie\_logits
    & U & 0.000 & 0.000 \\
  convolution
    & smooth\_conv1d
    & S & 6.363 & 6.225 \\
  convolution
    & difference\_conv1d\_large\_signal
    & U & 0.000 & 0.000 \\
  linear
    & orthogonal\_linear\_map
    & S & 4.723 & 4.347 \\
  linear
    & cancelling\_linear\_map
    & U & 0.000 & 0.000 \\
  reduction
    & positive\_reduction\_sum
    & S & 6.405 & 5.989 \\
  reduction
    & alternating\_cancellation\_sum
    & U & 0.662 & 0.424 \\
  \bottomrule
\end{tabular}}

\end{minipage}
\hfill
\begin{minipage}[t]{0.49\textwidth}
\centering
\textbf{Double precision-like format ((e,p)=(11,52))}
\vspace{0.3em}

\resizebox{\linewidth}{!}{%
\begin{tabular}{llcrr}
  \toprule
  Category & Operator & Type & Avg. & Min. \\
  \midrule
  softmax
    & naive\_softmax\_overflow
    & U & 0.000 & 0.000 \\
  softmax
    & shifted\_softmax\_large\_logits
    & S & 12.257 & 12.217 \\
  softmax
    & translation\_invariant\_log\_softmax
    & S & 12.394 & 12.347 \\
  normalization
    & layernorm\_regular\_variance
    & S & 12.846 & 12.264 \\
  normalization
    & layernorm\_near\_constant
    & U & 5.387 & 5.317 \\
  attention
    & attention\_separated\_logits
    & S & 14.925 & 14.840 \\
  attention
    & attention\_near\_tie\_logits
    & U & 5.783 & 5.624 \\
  convolution
    & smooth\_conv1d
    & S & 14.941 & 14.842 \\
  convolution
    & difference\_conv1d\_large\_signal
    & U & 4.733 & 3.880 \\
  linear
    & orthogonal\_linear\_map
    & S & 13.258 & 12.881 \\
  linear
    & cancelling\_linear\_map
    & U & 7.272 & 7.113 \\
  reduction
    & positive\_reduction\_sum
    & S & 14.721 & 14.424 \\
  reduction
    & alternating\_cancellation\_sum
    & U & 8.457 & 8.310 \\
  \bottomrule
\end{tabular}}

\end{minipage}

\vspace{0.25em}

\footnotesize
S denotes an operator expected to be numerically stable, while U denotes an
operator expected to exhibit numerical instability. The benchmark achieved
perfect classification, with accuracy, precision, and recall all equal to
(1.000).
\end{table*}

Table~\ref{tab:operator-level-dl-cestac} shows the results of the numerical stability test for the neural operator under arithmetic-level stochastic instrumentation. Stability is measured by the lowest number of significant digits found in the output tensor. In the FP32 setting, the unstable formulations keep very few reliable digits on average (between 0 and 1.015), while the stable versions keep more than three. In the FP64 setting the qualitative ordering stays the same, but the unstable versions gain a few more digits because the format has a larger significand. The two softmax formulations highlight that it is the way the computation is written, not the meaning of the operation, that decides stability. The shifted version avoids overflow by keeping the exponentials in a safe range and is therefore stable. The naive version overflows and is flagged as unstable immediately. The attention and normalization examples show that instability does not always require overflow. It can also happen when there are nearly identical values or when the variance is close to zero.

\begin{figure}[ht]
    \centering
    \subfloat[Significant digits estimated for the three softmax implementations.]{
        \includegraphics[width=0.48\linewidth]{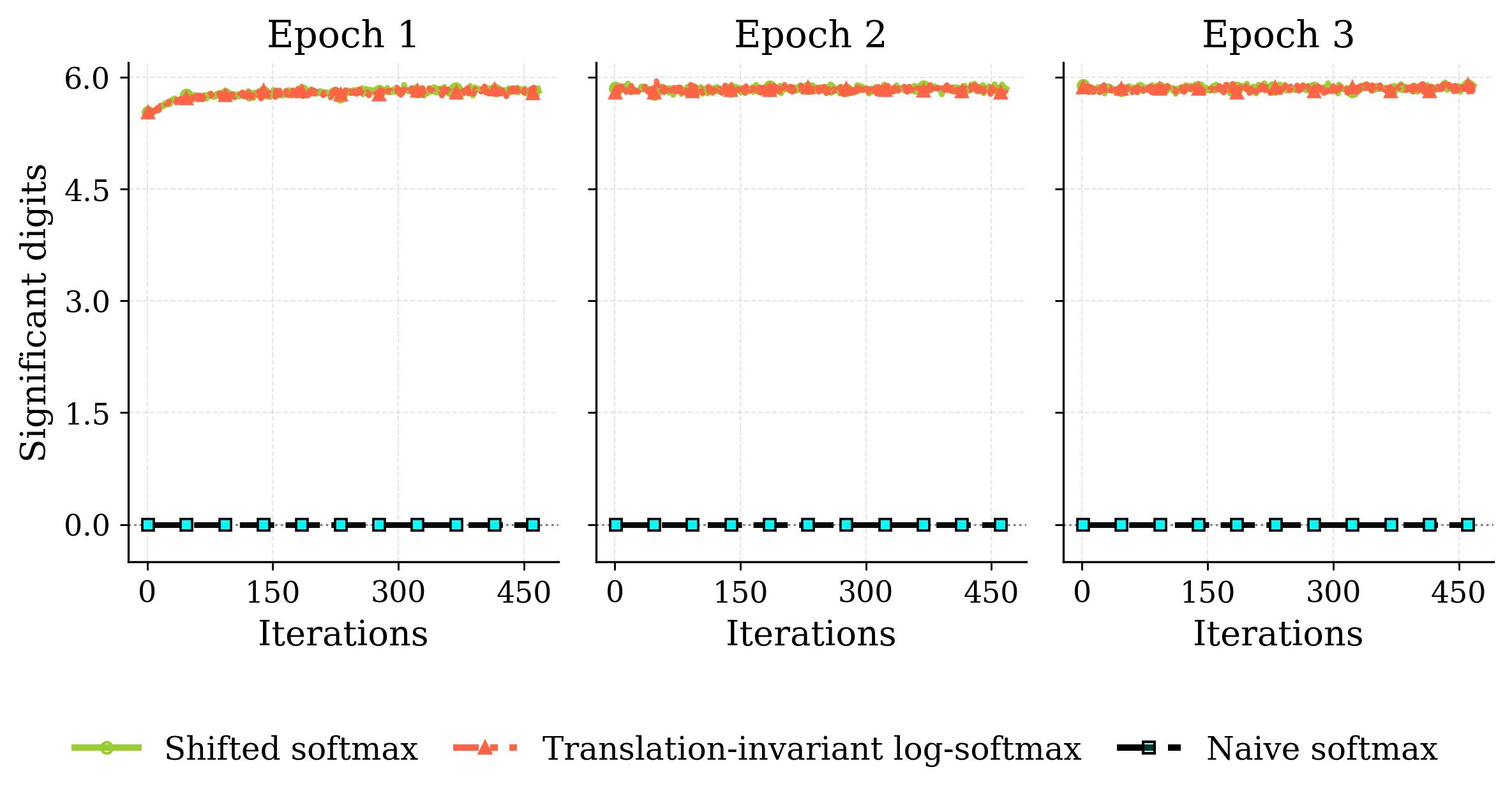}}
    \subfloat[Representative negative log-likelihood loss during training.]{
        \includegraphics[width=0.48\linewidth]{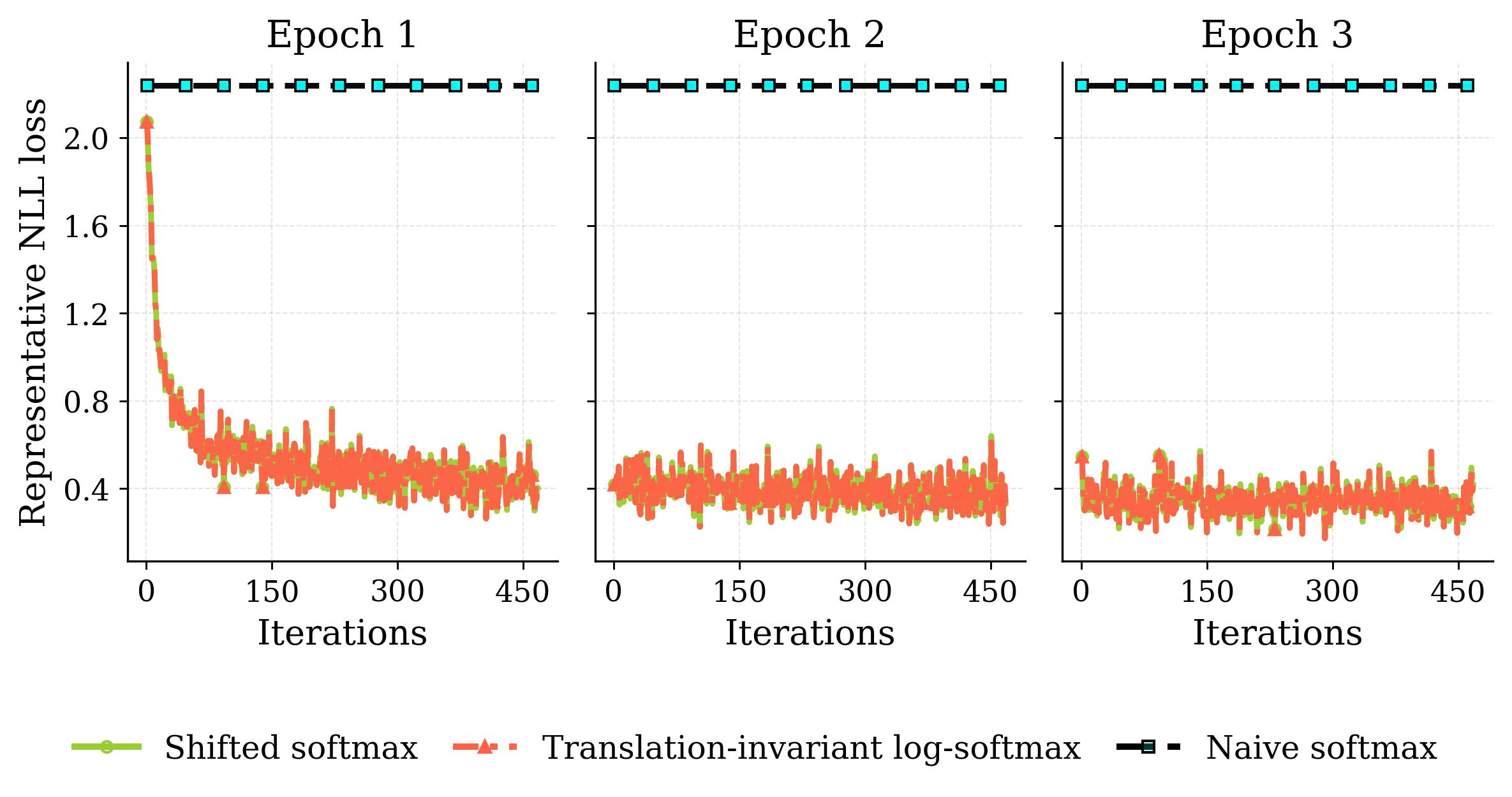}}
    \caption{Fashion-MNIST softmax stability experiment under arithmetic-level
    CESTAC instrumentation. The shifted softmax and the
    translation-invariant log-softmax remain finite and numerically usable
    throughout training, while the naive softmax becomes non-finite under the
    large-logit stress setting and its loss is clipped for visualization. The
    result illustrates that CESTAC significant digits diagnose local numerical
    reliability of the softmax operator, whereas the training loss measures the
    end-to-end optimization objective. }
    \label{fig:unstable_stable_softmax}
\end{figure}

\subsection{Controlled Operator-Level Numerical Pathology Study}
\label{sec:exp-controlled-neural-pathology}

The preceding controlled neural-operator benchmark in
Table~\ref{tab:operator-level-dl-cestac} evaluates isolated operator families and shows that \xyy{the CESTAC-style arithmetic-level reports} separate stable formulations from deliberately unstable ones. The present experiment turns
that isolated benchmark into an end-to-end diagnostic test. We select three representative hazards from the same families{--}cancelling linear transformations, near-constant normalization, and near-tie attention{--}and insert one of them into an otherwise ordinary classifier. The question is
therefore more operational than in Table~\ref{tab:operator-level-dl-cestac}:
when a known numerical pathology is embedded in a trainable network, can the
stochastic reports localize the injected operator and indicate whether its
loss of significant digits reaches the final decision? To answer this question, we simulate the classification tasks
with Fashion-MNIST \citep{xiao2017fashionmnist} and CIFAR-10 \citep{krizhevsky2009learning}, respectively. Both runs use the
same training budget and stochastic-analysis configuration: Adam with learning rate $10^{-3}$, batch size $64$, hidden dimension $128$, a
training subset with 2,048 examples,  and at most 32 optimizer steps per epoch. In this and the following experiments, our objective is not to achieve state-of-the-art task performance, but to evaluate whether the proposed stochastic numerical diagnostics can localize and quantify numerical instability. Therefore, a short three-epoch training schedule is sufficient for the validation purpose.
 
The architectures differ only in the image front end needed by the dataset. For
Fashion-MNIST, the deterministic classifier is the shallow MLP
\begin{equation}
z
=
W_2\,
\Pi\!\left(
\operatorname{ReLU}(W_1\operatorname{vec}(x)+b_1)
\right)+b_2,
\label{eq:pathology-mlp}
\end{equation}
whereas for CIFAR-10 the deterministic classifier uses a fixed compact
convolutional front end before the same controlled hidden operator,
\begin{align}
u_1 &= \operatorname{ReLU}(\operatorname{Conv}_{3\times 3,s=2,p=1}^{3\rightarrow 32}(x)),\nonumber, \\
u_2 &= \operatorname{ReLU}(\operatorname{Conv}_{3\times 3,s=2,p=1}^{32\rightarrow 64}(u_1)),\nonumber\\
z
&=
W_2\,
\Pi\!\left(
\operatorname{ReLU}(W_1\operatorname{vec}(u_2)+b_1)
\right)+b_2 ,
\label{eq:pathology-cnn}
\end{align}
where $\Pi$ is the controlled operator under test and the CIFAR-10 images
are normalized with the channel-wise mean and standard deviation used by the
runner. The deterministic model is trained in FP32. After
each optimizer step, a \xyy{synchronized CESTAC-style arithmetic-level shadow model} replays the same batch with three stochastic samples in FP32 precision. The shadow pass is used only for numerical diagnosis; it does not
modify the optimizer state or the deterministic training trajectory.

The experiment instantiates six versions of $\Pi$, organized as stable and
unstable counterparts of the same operator classes used in the controlled
operator table. Three are stable controls:
\begin{align}
\Pi_{\mathrm{id}}(h)&=h,
\label{eq:pathology-identity}\\
\Pi_{\mathrm{ln}}^{\mathrm{s}}(h)
&=
\operatorname{LayerNorm}_{10^{-5}}(h),
\label{eq:pathology-regular-ln}\\
\Pi_{\mathrm{att}}^{\mathrm{s}}(h)
&=
\operatorname{sigmoid}(8)h+\left(1-\operatorname{sigmoid}(8)\right)(-h),
\label{eq:pathology-separated-att}
\end{align}
where the attention-like gate in Eq.~\eqref{eq:pathology-separated-att} has a
large logit separation. In the normalization formulas below, $\mu(h)$ denotes
the feature-wise mean of the hidden vector. Three variants inject controlled
numerical hazards:
\begin{align}
&\Pi_{\mathrm{can}}^{\mathrm{u}}(h)
=(h+10^{8})-10^{8},
\label{eq:pathology-cancellation}\\
&\Pi_{\mathrm{ln}}^{\mathrm{u}}(h)
=
\operatorname{LayerNorm}_{10^{-12}}
\!\left((\mu(h)+\alpha_{\mathcal{D}}(h-\mu(h))+\delta_{\mathcal{D}})-\delta_{\mathcal{D}}\right),
\label{eq:pathology-nearconstant-ln}\\
&\Pi_{\mathrm{att}}^{\mathrm{u}}(h)
=
\operatorname{sigmoid}(-2^{-22})(h+10^{8})
+
\left(1-\operatorname{sigmoid}(-2^{-22})\right)(-h-10^{8}+1).
\label{eq:pathology-neartie-att}
\end{align}
Equation~\eqref{eq:pathology-cancellation} is the network-level analogue of
the cancelling linear map in Table~\ref{tab:operator-level-dl-cestac}: it
removes a large offset and should lose the small residual in FP32-like
arithmetic. Equation~\eqref{eq:pathology-nearconstant-ln} corresponds to the near-constant LayerNorm case: it drives the variance toward zero, so centering
and variance normalization amplify perturbations. The dataset-specific
stress parameters are $(\alpha_{\mathcal{D}},\delta_{\mathcal{D}})=(2^{-22},0)$
for Fashion-MNIST and $(2^{-30},10^8)$ for CIFAR-10; the latter adds the same
large-offset cancellation used elsewhere in this controlled study before
normalization. Equation
\eqref{eq:pathology-neartie-att} mirrors the near-tie attention case: nearly
tied logits are combined with opposing large values, so small perturbations in
the gate can change the cancellation pattern at the output. For each dataset, these
constructions preserve the surrounding classifier architecture, so
differences in the reports can be attributed to the inserted operator rather
than to a different model class.

For each batch $t$, the pathology-operator output $Y_{\Pi,t}$ emits average
and minimum significant-digit summaries, denoted
$D_{\Pi,t}^{\mathrm{avg}}$ and $D_{\Pi,t}^{\min}$. The minimum is the stricter
indicator because it captures the least reliable tail of the output tensor. To
measure whether stochastic operator uncertainty affects decisions, let
$\widetilde{z}_b^{(i)}$ be the shadow-model class-logit vector for example
$b$ and stochastic sample $i\in\{1,2,3\}$, and define
\begin{equation}
\widehat{y}_b^{(i)}
=
\arg\max_c \widetilde{z}_{b,c}^{(i)}.
\end{equation}
The sample top-1 agreement rate is
\begin{equation}
A_t
=
\frac{1}{B}\sum_{b=1}^{B}
\mathbf{1}\!\left[
\widehat{y}_b^{(1)}
=\widehat{y}_b^{(2)}
=\widehat{y}_b^{(3)}
\right],
\label{eq:sample-top1-agreement}
\end{equation}
and the disagreement rate is $1-A_t$. We also record the fifth percentile of
the representative top-1/top-2 logit margin and the fifth percentile of the
decision signal-to-noise ratio, but the primary output-level diagnostic in
this subsection is Eq.~\eqref{eq:sample-top1-agreement}: it directly asks
whether the three stochastic realizations induce the same class decision.

\begin{figure}
\centering
\subfloat[Epoch-wise pathology-operator significant digits.]{
  \includegraphics[width=0.48\linewidth]{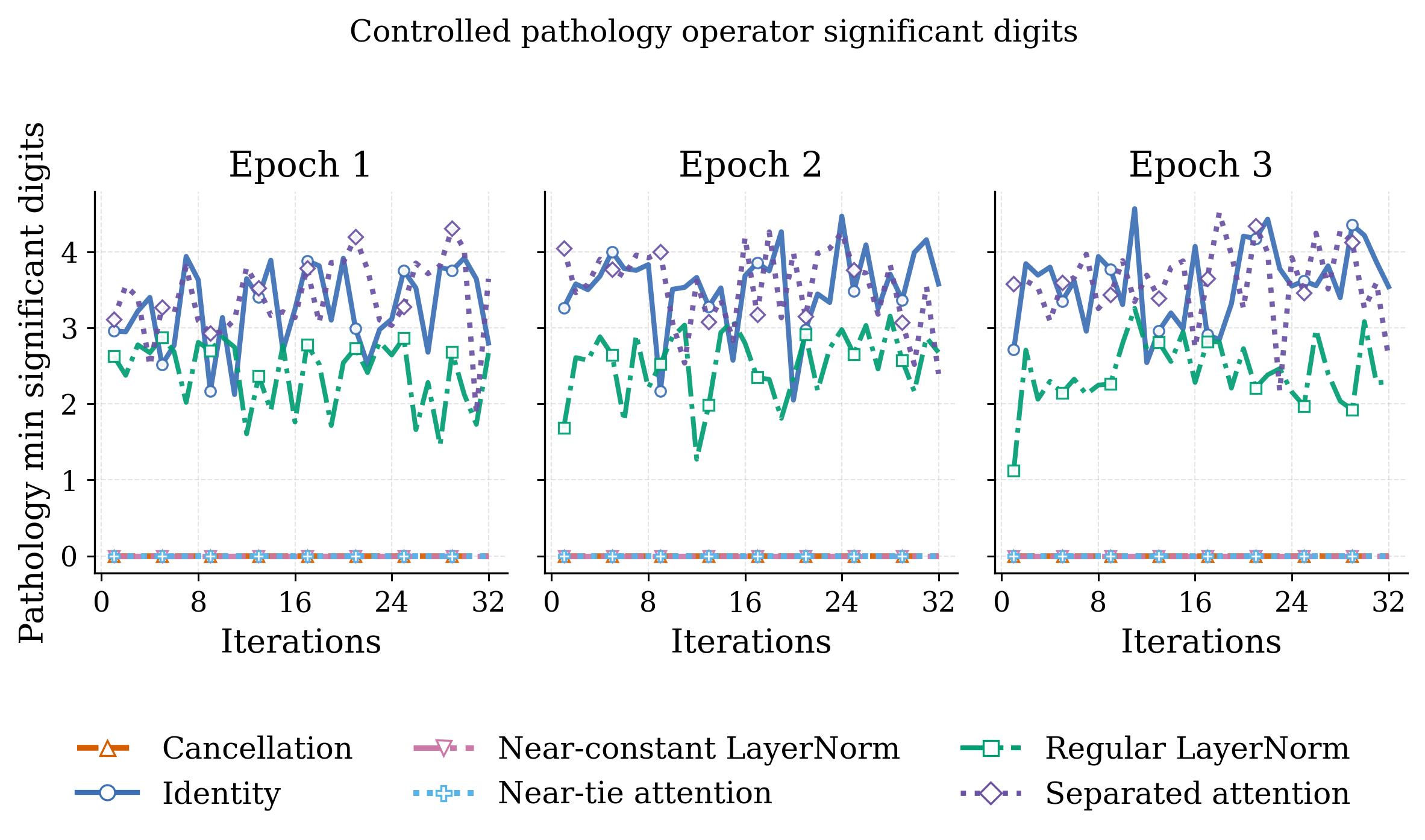}}
\subfloat[Mean digit ranking by inserted operator.]{
  \includegraphics[width=0.48\linewidth]{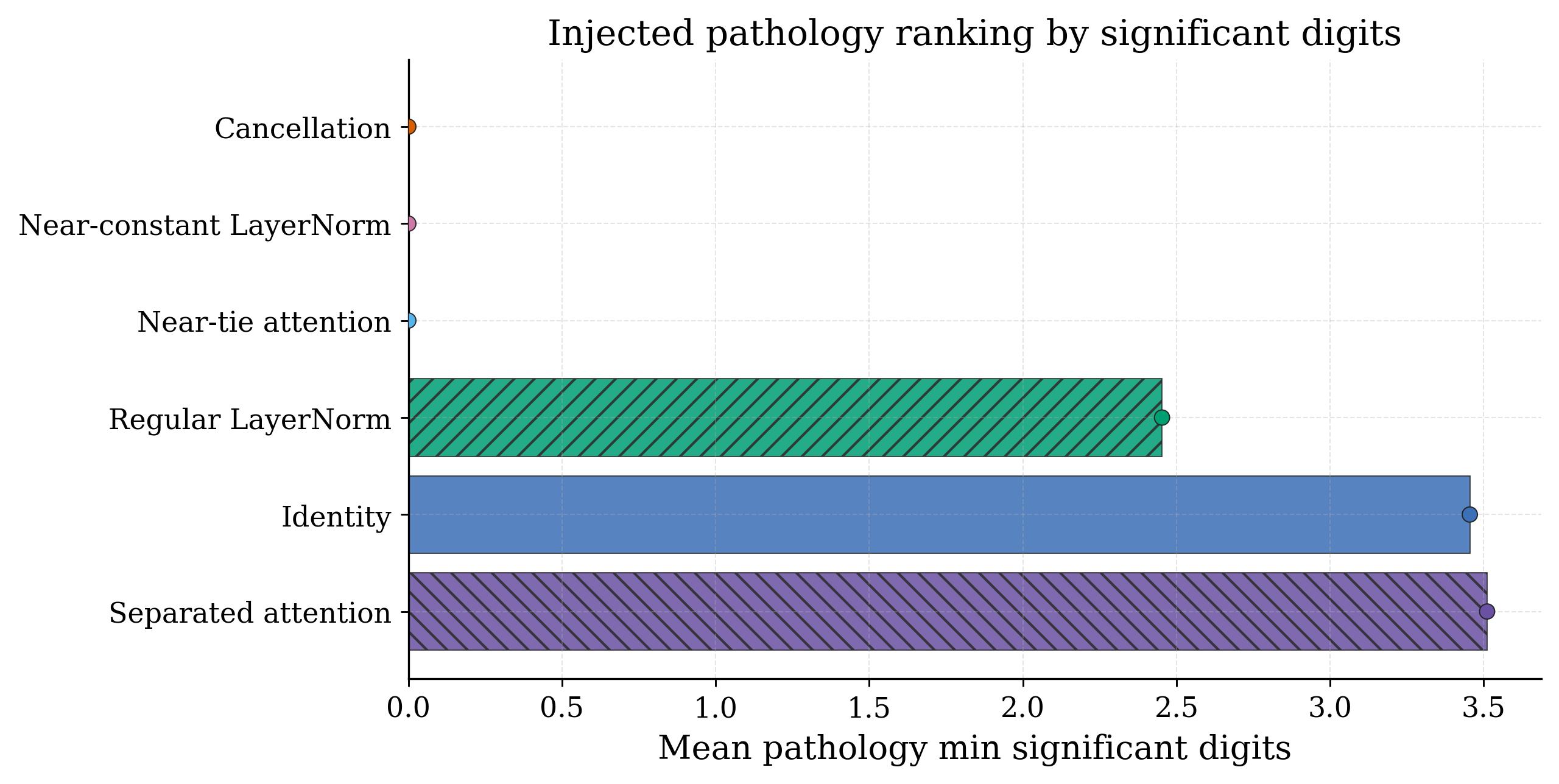}}\\
\subfloat[Task accuracy and stochastic decision agreement.]{
  \includegraphics[width=0.51\linewidth]{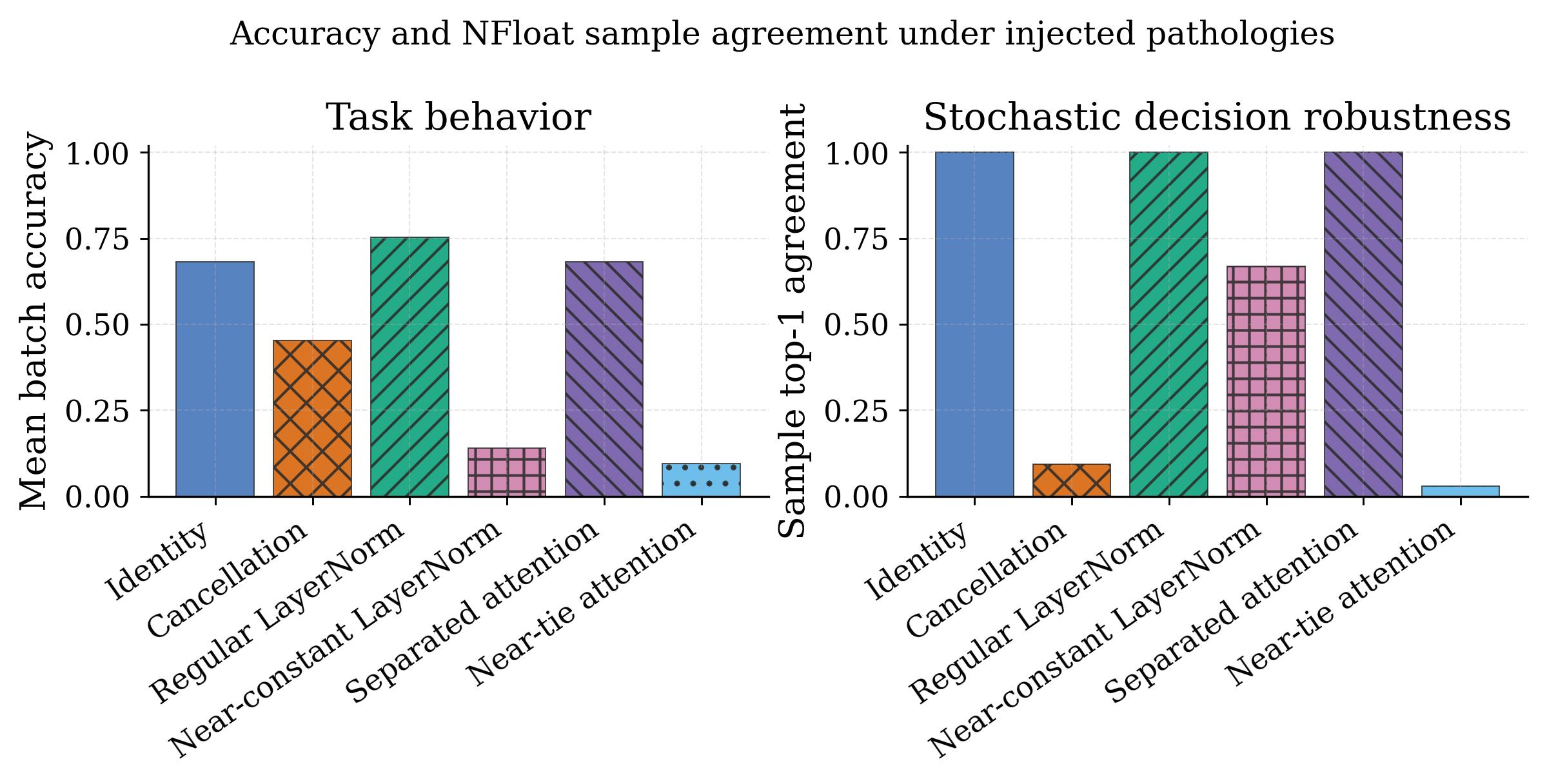}}
\subfloat[Operator digits versus decision agreement.]{
  \includegraphics[width=0.42\linewidth]{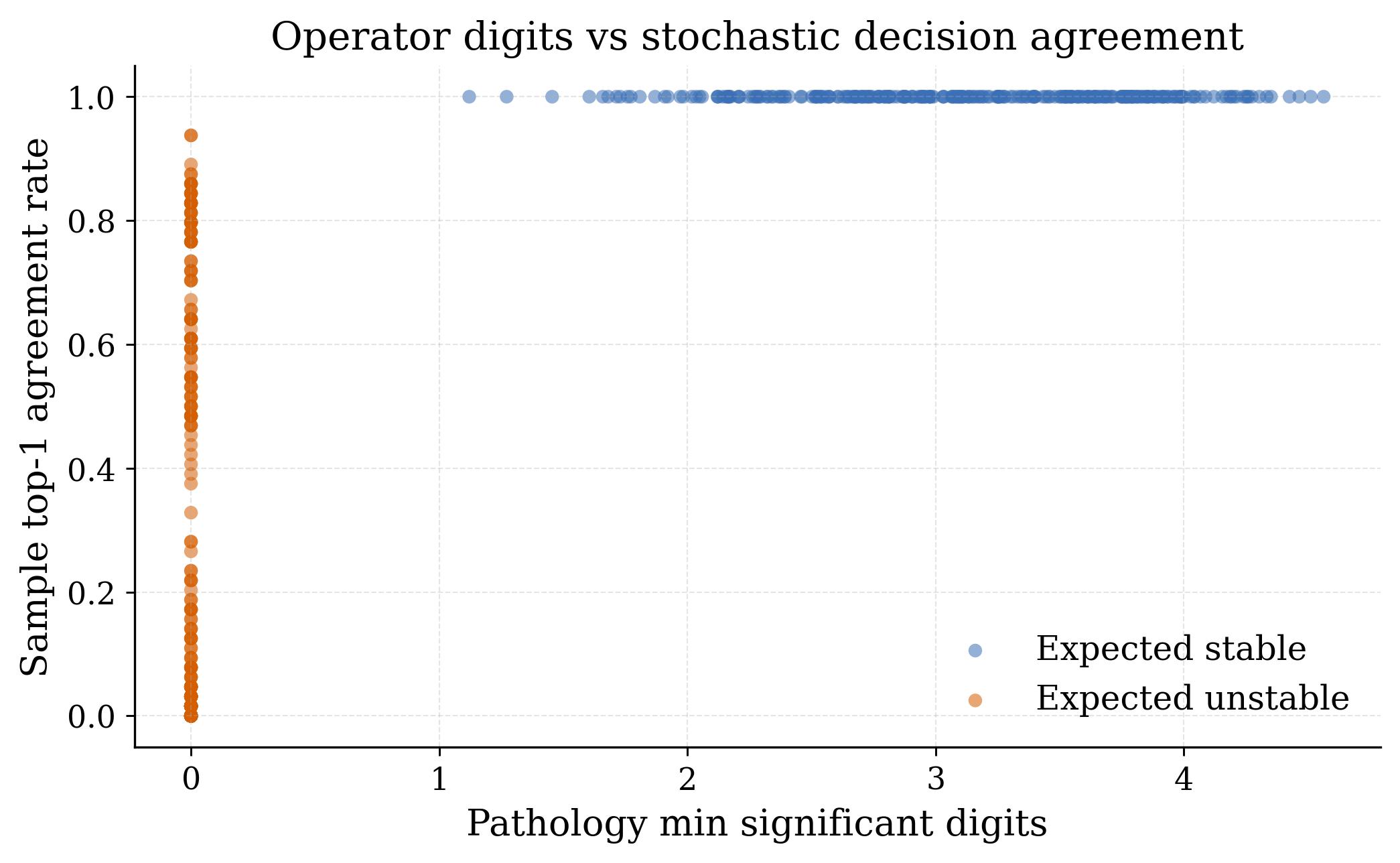}}
\caption{Controlled neural-pathology study on Fashion-MNIST. Each model
differs only in the inserted operator $\Pi$ in Eq.~\eqref{eq:pathology-mlp}.
Stable controls are shown alongside cancellation, near-constant
normalization, and near-tie attention pathologies. The figure compares
operator-level significant digits with output-level robustness measured by
the agreement of the three stochastic class predictions.}
\label{fig:pathology_study_fashionmnist}
\end{figure}

\begin{figure}
\centering
\subfloat[Epoch-wise pathology-operator significant digits.]{
  \includegraphics[width=0.48\linewidth]{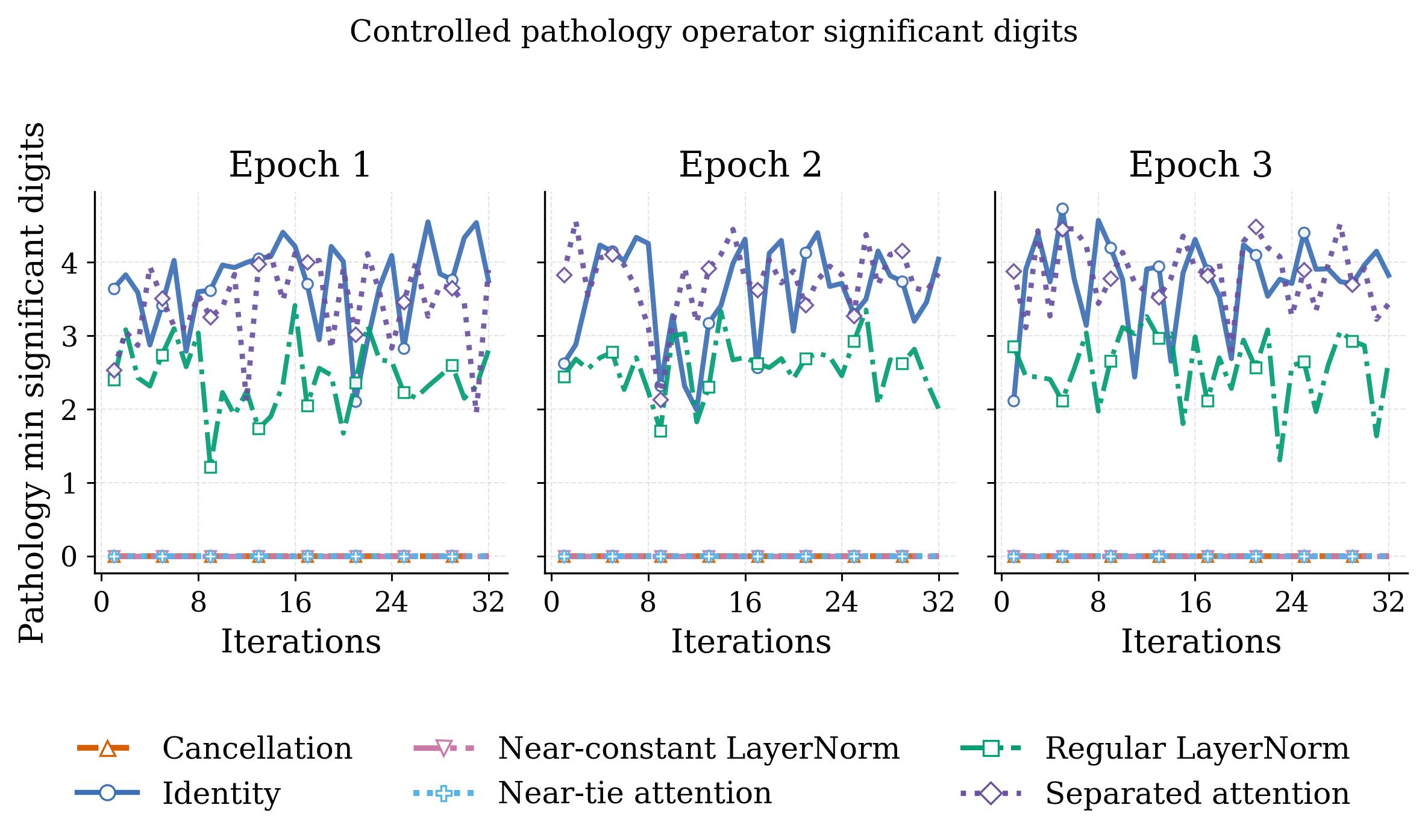}}
\subfloat[Mean digit ranking by inserted operator.]{
  \includegraphics[width=0.48\linewidth]{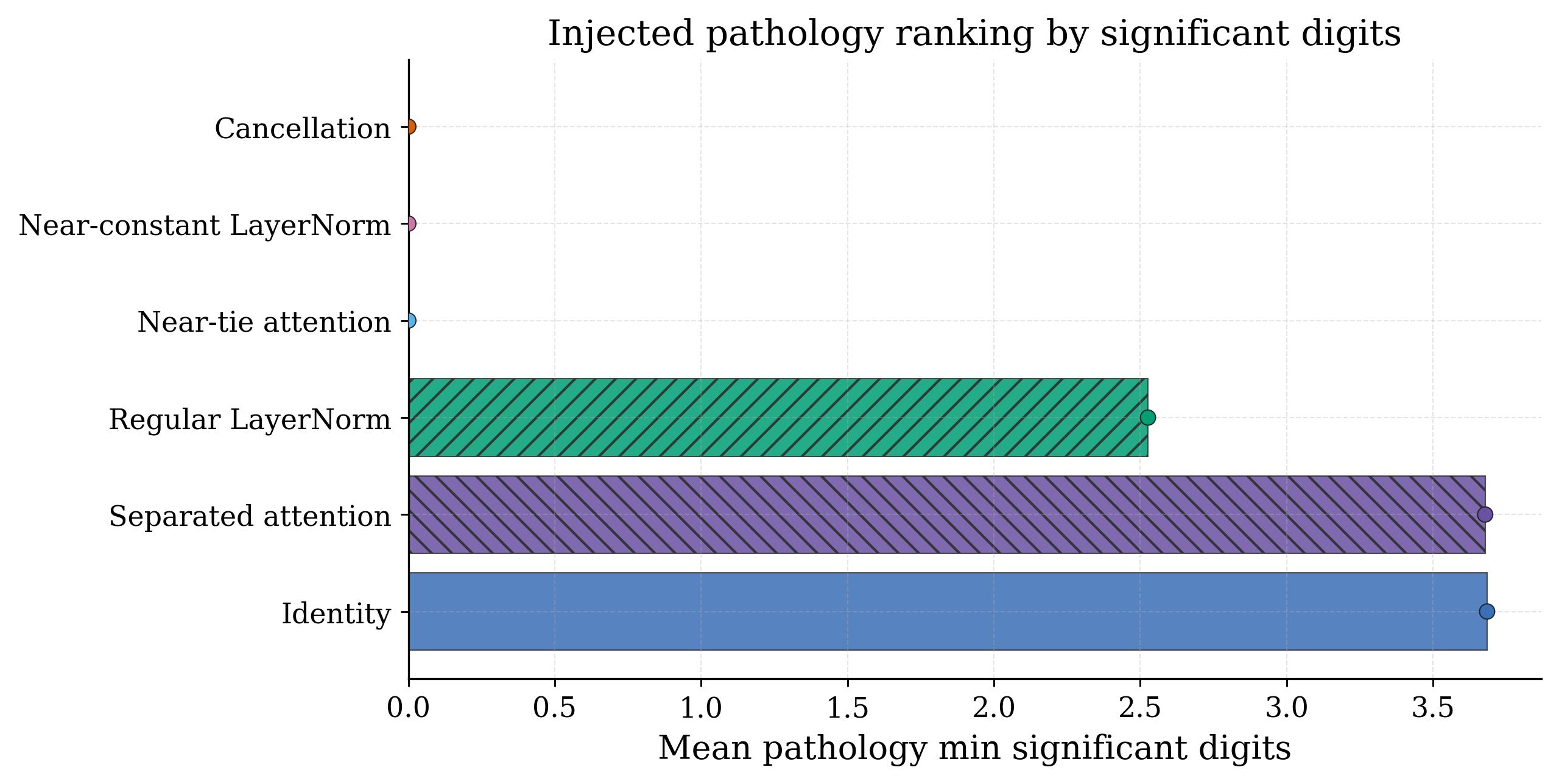}}\\
\subfloat[Task accuracy and stochastic decision agreement.]{
  \includegraphics[width=0.51\linewidth]{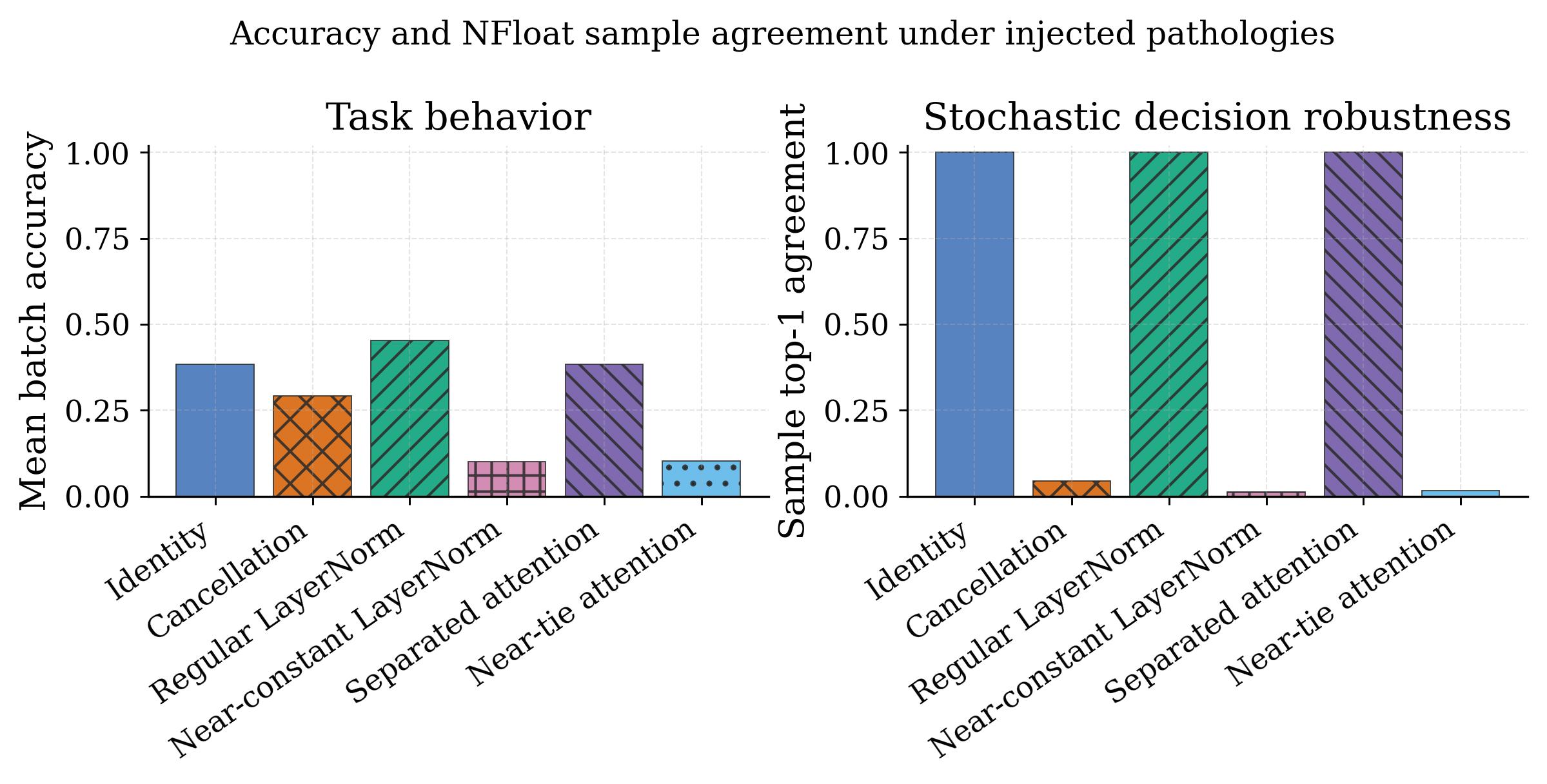}}
\subfloat[Operator digits versus decision agreement.]{
  \includegraphics[width=0.42\linewidth]{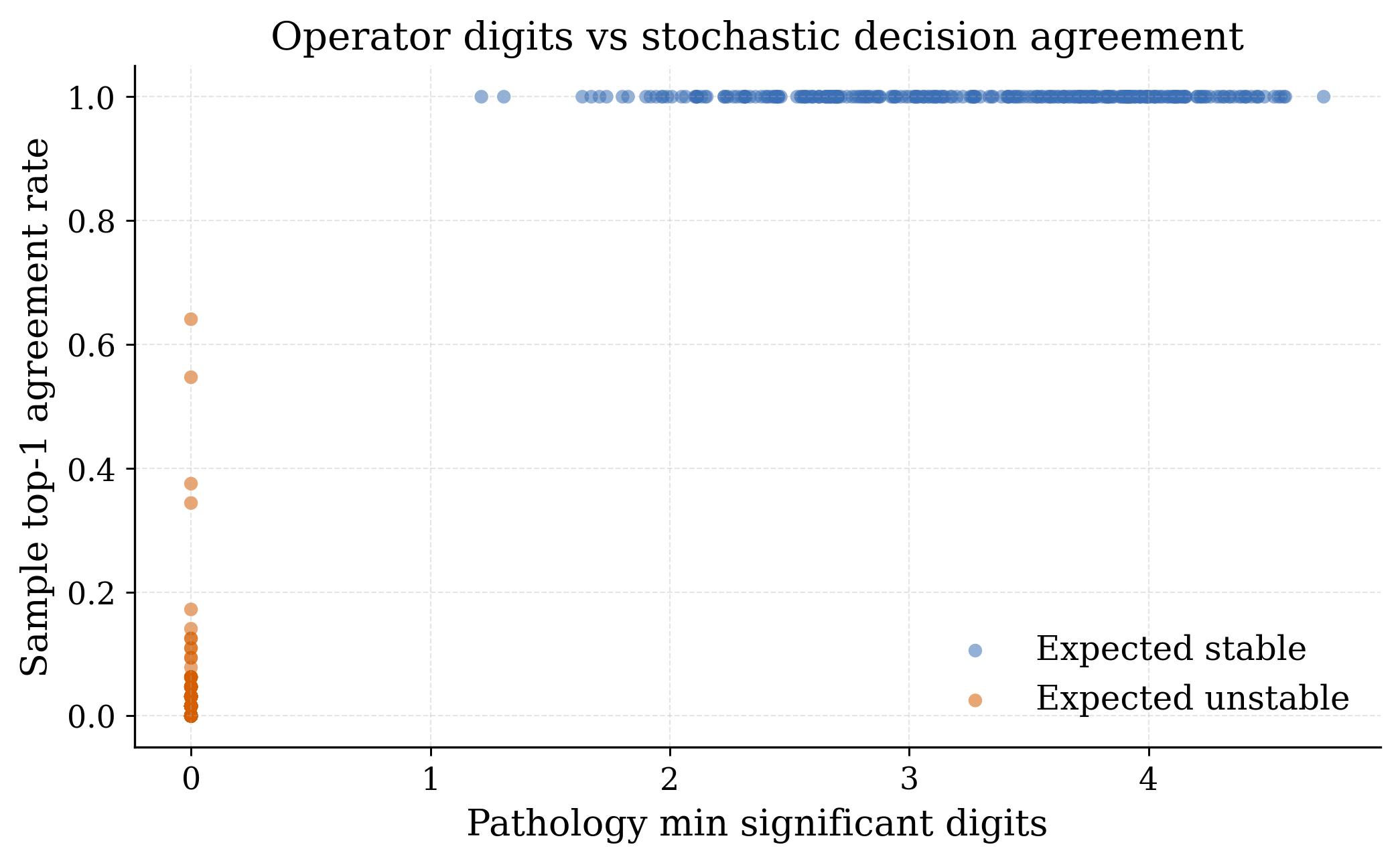}}
\caption{Controlled neural-pathology study on CIFAR-10. Each model
differs only in the inserted operator $\Pi$ after the fixed convolutional
front end in Eq.~\eqref{eq:pathology-cnn}.
Stable controls are shown alongside cancellation, near-constant
normalization, and near-tie attention pathologies. The figure compares
operator-level significant digits with output-level robustness measured by
the agreement of the three stochastic class predictions.}
\label{fig:pathology_study_cifar}
\end{figure}

Figures~\ref{fig:pathology_study_fashionmnist} and~\ref{fig:pathology_study_cifar} show that the injected numerical hazards are
localized by the operator reports. Across the MLP Fashion-MNIST run and
the CNN CIFAR-10 run, the identity and separated-attention controls retain the
highest pathology-operator digit levels and high stochastic top-1 agreement.
Regular LayerNorm typically reports lower digits than these two controls
because normalization involves centering and rescaling, but it remains a
stable control in the sense that the stochastic class predictions are still
largely consistent. This distinction is important: moderate digit loss inside
an operator is not automatically a decision-level failure when the downstream
logit margins remain sufficiently separated.

The three unstable constructions produce a different pattern. The
cancellation and near-tie attention operators are designed to remove
significant residual information through large-offset cancellation, while the
near-constant LayerNorm operator collapses the hidden variance before
normalization. In both image tasks, these operators are ranked among the
lowest-digit reports, and their batches show weaker sample-level decision
robustness than the stable controls. The near-constant LayerNorm case can be
partially masked at the output: the operator itself may be numerically
unreliable, but only batches with sufficiently small downstream margins change
class across stochastic samples. The near-tie attention construction is the
most direct decision-level stress test because the nearly tied gate and the
cancelling values in Eq.~\eqref{eq:pathology-neartie-att} allow local
arithmetic uncertainty to propagate into the classifier decision.

Panel~(d) provides the most direct link between the method's local estimator
and the task-level effect. Points in the upper-right region correspond to
operators whose minimum significant digits are sufficient for the three
stochastic samples to select the same class. Points in the lower-left region
correspond to batches where the injected operator loses nearly all reliable
digits and the stochastic samples disagree. Points with low digits but high
agreement, mainly from the near-constant LayerNorm case, indicate that local
instability can be absorbed by later layers when the representative prediction
has enough margin. This behavior is consistent with the stability analysis in
Section~\ref{sec:method}: CESTAC significant digits measure local numerical
reliability, whereas the final decision also depends on conditioning and
margin in the subsequent computation. The experiment therefore supports the
intended use of the framework as a localization tool: low significant digits
identify where numerical reliability is lost, and the joined decision metrics
indicate whether that loss is merely local or reaches the model output.

\subsection{Controlled Sequence-Operator Pathology Study}
\label{sec:exp-controlled-sequence-pathology}

The above pathology experiment evaluates injected numerical hazards in
a vector-valued hidden representation. To test the same diagnostic principle
on a different data modality and a different group of polluted operators, we
also construct a controlled sequence-classification experiment on AG News \citep{zhang2015character}. The
task is four-class topic classification from tokenized news headlines and
short descriptions. The model is intentionally compact: each token
$w_{\ell}$ is mapped to an embedding $e_{\ell}$, projected to a hidden
representation, transformed by a controlled sequence operator $\Omega$, mean
pooled over the sequence, and classified:
\begin{align}
e_{\ell} &= E[w_{\ell}], \qquad
h_{\ell} = \rho(W_e e_{\ell}+b_e), \quad \ell=1,\ldots,L,
\label{eq:agnews-token-hidden}\\
R &= \Omega(H), \qquad
r = \frac{1}{L}\sum_{\ell=1}^{L} R_{\ell}, \qquad
z = W_o r+b_o,
\label{eq:agnews-sequence-classifier}
\end{align}
where $H=(h_1,\ldots,h_L)$ and $\rho$ is ReLU. As in the preceding
pathology experiment, the deterministic model is optimized in FP32 and a
synchronized shadow model replays each mini-batch using \xyy{the CESTAC-style arithmetic-level mode} in FP32 precision. The shadow pass records
operator significant digits and stochastic decision metrics, but it does not
update the deterministic optimizer state.

The controlled operator $\Omega$ is chosen from six sequence-level
formulations. Three act as stable controls:
\begin{align}
\Omega_{\mathrm{id}}(H) &= H,
\label{eq:agnews-identity}\\
\Omega_{\mathrm{gate}}^{\mathrm{s}}(H)
&= H\odot \sigma(H),
\label{eq:agnews-smooth-gate}\\
\Omega_{\mathrm{att}}^{\mathrm{s}}(H)_{\ell}
&= L\,a_{\ell} h_{\ell}, \qquad
a_{\ell}
=
\frac{\exp(s_{\ell}-m)}
{\sum_{j=1}^{L}\exp(s_j-m)},
\label{eq:agnews-stable-attention}
\end{align}
where $s_{\ell}=\operatorname{mean}(h_{\ell})$ and
$m=\max_j s_j$. Equation~\eqref{eq:agnews-stable-attention} uses the
translation-invariant softmax form, so the exponentials remain in a bounded
range. The three polluted operators inject different numerical hazards:
\begin{align}
\Omega_{\mathrm{can}}^{\mathrm{u}}(H)
&=(H+10^8)-10^8,
\label{eq:agnews-token-cancellation}\\
\Omega_{\mathrm{exp}}^{\mathrm{u}}(H)_{\ell}
&=
L\,
\frac{\exp(160s_{\ell}+96)}
{\sum_{j=1}^{L}\exp(160s_j+96)}
h_{\ell},
\label{eq:agnews-naive-exp-attention}\\
\Omega_{\mathrm{rec}}^{\mathrm{u}}(H)
&=
\epsilon\,
\frac{H}{(\bar{H}+\epsilon)-\bar{H}},
\qquad
\epsilon=2^{-24}.
\label{eq:agnews-reciprocal-gate}
\end{align}
The cancellation operator in Eq.~\eqref{eq:agnews-token-cancellation} tests
whether the framework detects loss of the small token-level residual after a
large offset is introduced and removed. The naive attention operator in
Eq.~\eqref{eq:agnews-naive-exp-attention} deliberately removes the
translation-invariance safeguard and scales the logits so that overflow or
nearly singular normalization can occur. The reciprocal gate in
Eq.~\eqref{eq:agnews-reciprocal-gate} is algebraically close to the identity
in exact arithmetic, but it evaluates a division by an almost-zero quantity
before rescaling, thereby probing unstable division inside a sequence
operator. These polluted operators are distinct from the preceding experiment: they target token-wise cancellation, exp-normalization over sequence
positions, and reciprocal gating rather than hidden-vector LayerNorm or
near-tie attention on image features.

For each mini-batch $t$, the instrumented operator emits
$D_{\Omega,t}^{\mathrm{avg}}$ and $D_{\Omega,t}^{\min}$, the average and
minimum significant digits over the stochastic output tensor. The same
decision-level metric $A_t$ in Eq.~\eqref{eq:sample-top1-agreement} is used to
measure whether the three stochastic realizations predict the same AG News
class. Thus the experiment tests a falsifiable localization property: a
polluted operator should appear among the lowest-digit reports, while stable
controls should retain higher digit estimates and stronger sample-level
agreement.

\begin{figure}
\centering
\subfloat[Epoch-wise sequence-operator significant digits.]{
  \IfFileExists{agnews_pathology_digits_epoch_subplots.jpg}
  {\includegraphics[width=0.48\linewidth]{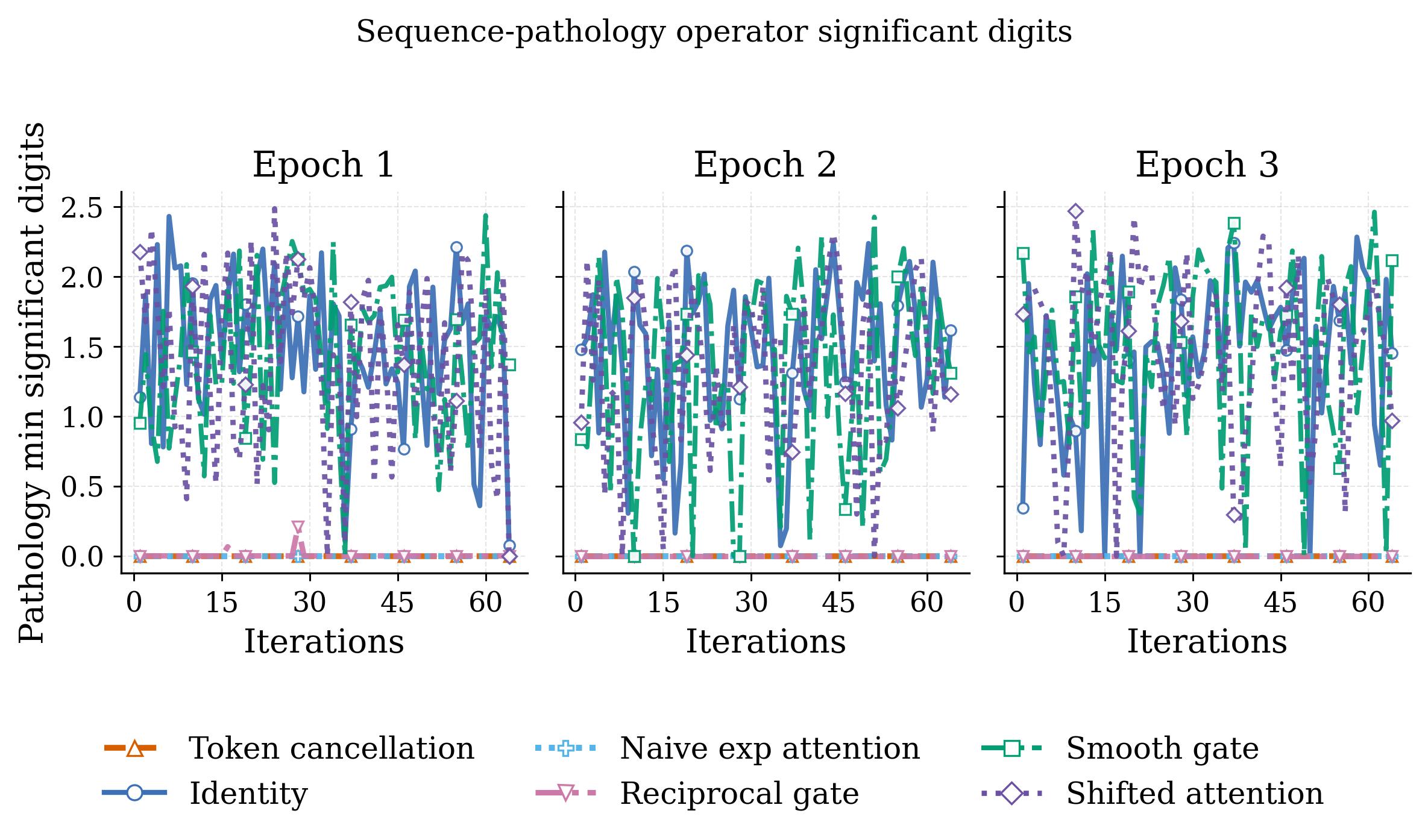}}
  {\fbox{\parbox[c][0.22\textheight][c]{0.43\linewidth}{\centering AG News digit subplot placeholder}}}}
\subfloat[Mean digit ranking by sequence operator.]{
  \IfFileExists{agnews_pathology_digit_ranking.jpg}
  {\includegraphics[width=0.48\linewidth]{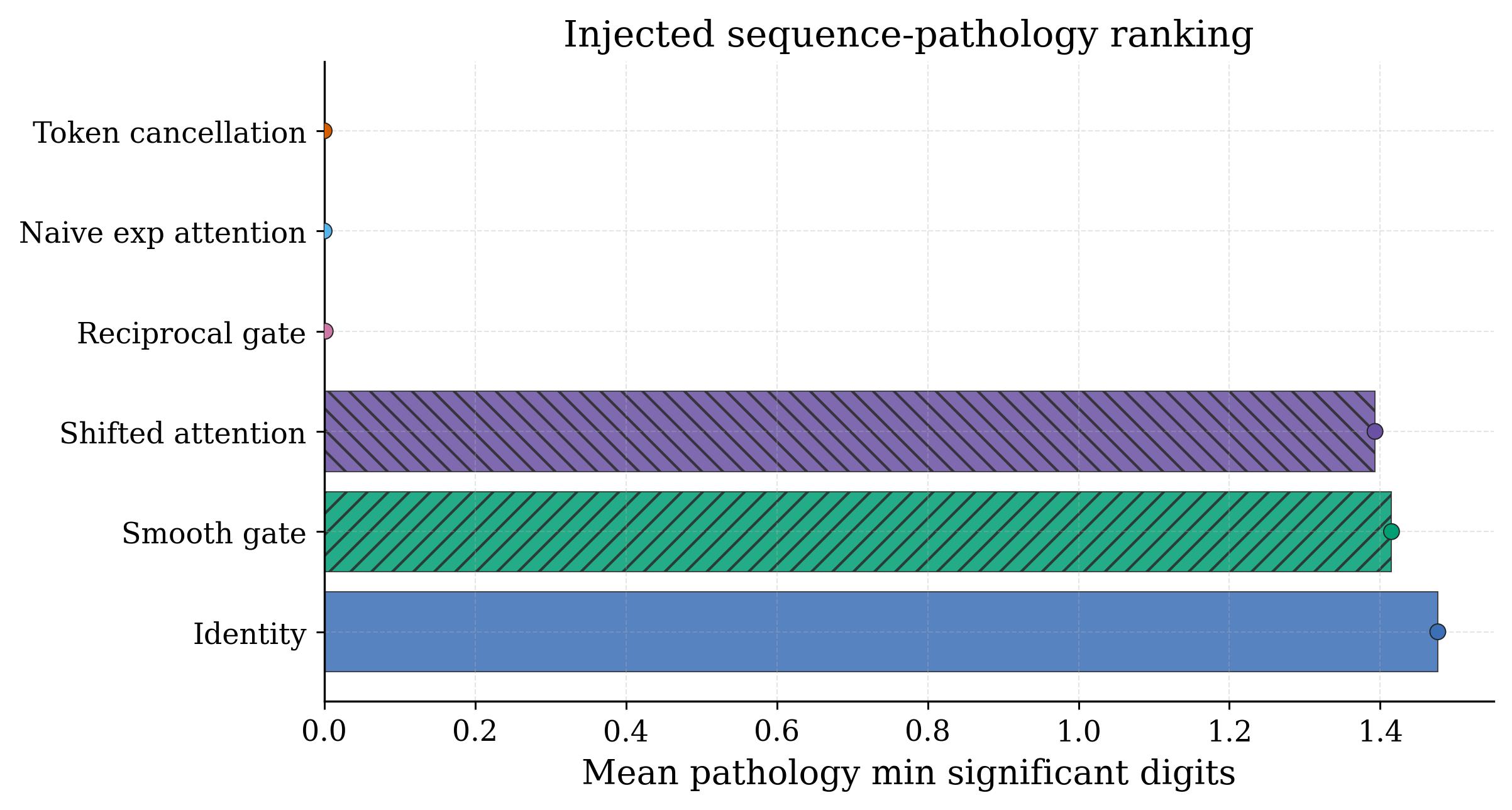}}
  {\fbox{\parbox[c][0.22\textheight][c]{0.43\linewidth}{\centering AG News digit ranking placeholder}}}}\\
\subfloat[Task accuracy and stochastic decision agreement.]{
  \IfFileExists{agnews_pathology_decision_robustness.jpg}
  {\includegraphics[width=0.51\linewidth]{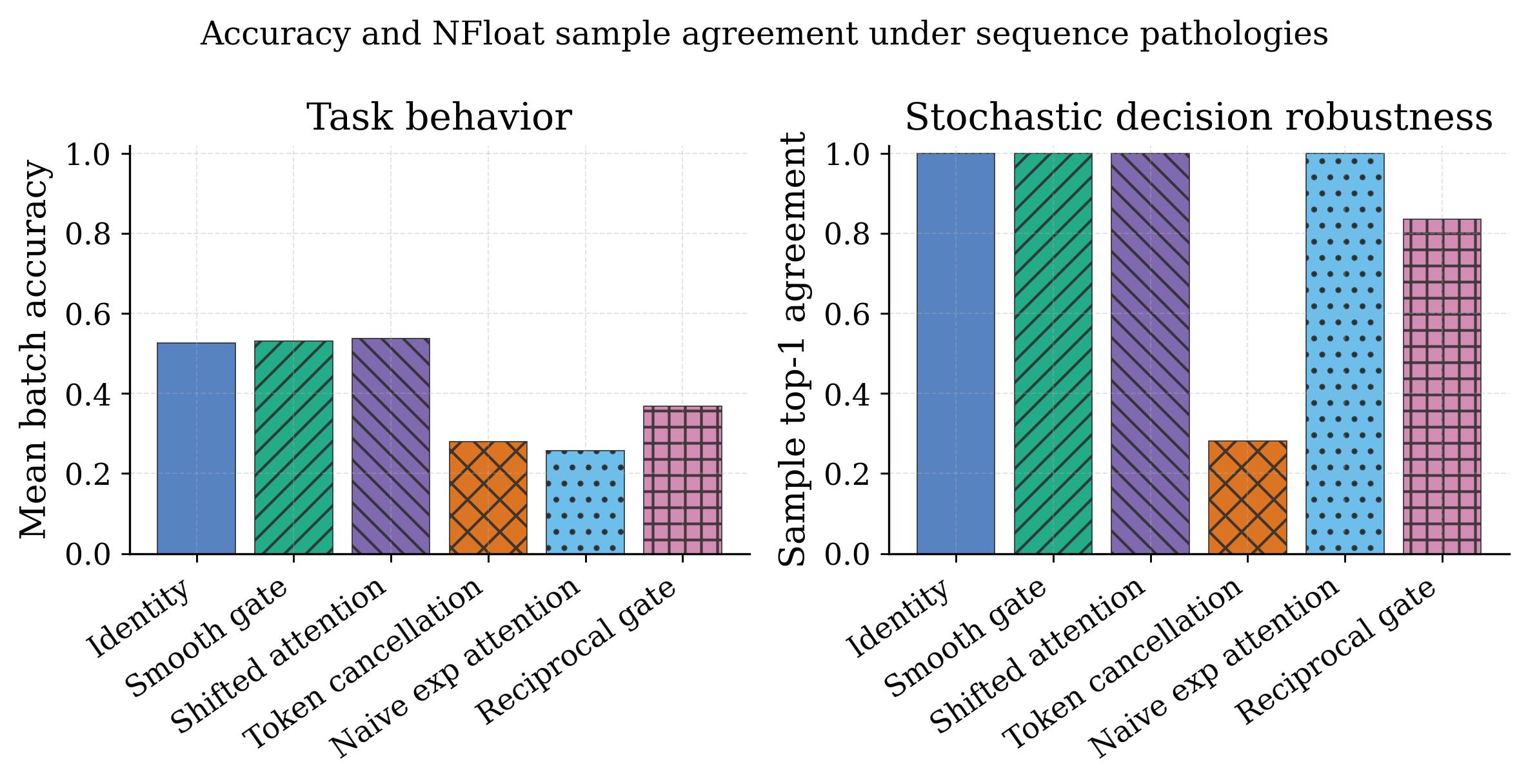}}
  {\fbox{\parbox[c][0.22\textheight][c]{0.46\linewidth}{\centering AG News robustness placeholder}}}}
\subfloat[Operator digits versus decision agreement.]{
  \IfFileExists{agnews_pathology_digits_vs_decision_agreement.jpg}
  {\includegraphics[width=0.42\linewidth]{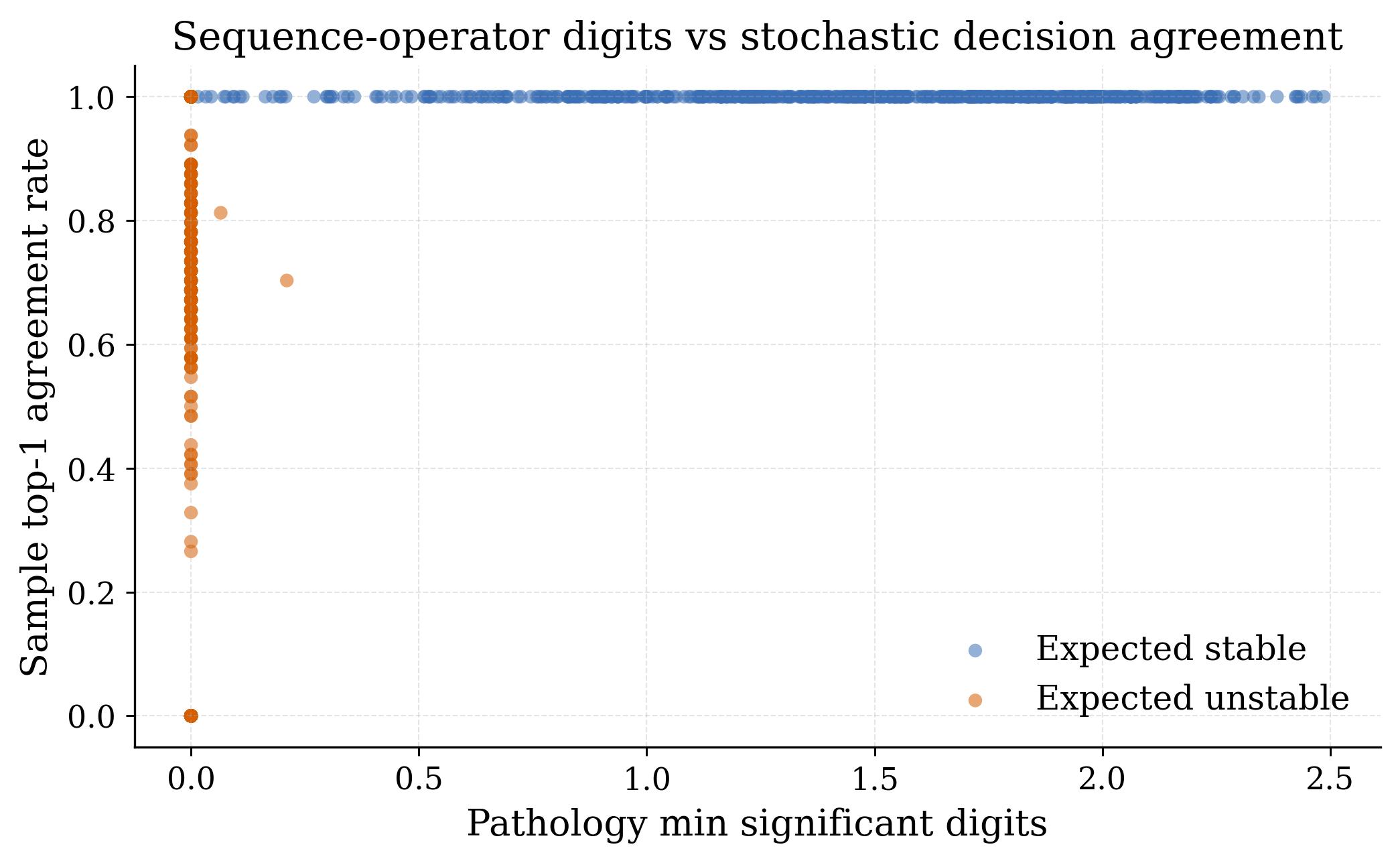}}
  {\fbox{\parbox[c][0.22\textheight][c]{0.38\linewidth}{\centering AG News digit-agreement placeholder}}}}
\caption{Controlled sequence-operator pathology study on AG News. The model
architecture in Eq.~\eqref{eq:agnews-sequence-classifier} is fixed while the
sequence operator $\Omega$ is replaced by stable controls or polluted
operators. The panels mirror the Fashion-MNIST pathology study, but the
numerical hazards are injected into token-level sequence operators: large
offset cancellation, unshifted exponential attention, and reciprocal gating
through a nearly zero statistic. }
\label{fig:agnews_pathology_study}
\end{figure}

This experiment serves a different role from ordinary task benchmarking. If
two sequence classifiers have comparable deterministic accuracy, that alone
does not show whether their internal operators are numerically reliable. The
expected outcome is instead structural: the shifted attention and smooth gate behave like stable controls, while the naive exponential attention and
reciprocal gate concentrate low significant digits at the injected
operator. When low $D_{\Omega,t}^{\min}$ is also accompanied by a decrease in
$A_t$, the local loss of numerical reliability reaches the final class
decision. Conversely, batches with low digits but high agreement indicate that
the instability is locally detectable but masked by downstream margins. This
interpretation is consistent with the preceding pathology study and with the
operator-level benchmark in Table~\ref{tab:operator-level-dl-cestac}: CESTAC
digits identify where rounding uncertainty is amplified, whereas
decision-level metrics indicate whether that uncertainty changes the task
output.

\subsection{Activation Functions on Fashion-MNIST}
\label{sec:exp-activation}
Activation functions play a critical role in facilitating information flow and introducing nonlinearity into neural networks.
The simulation on activation functions isolates the numerical effect of the hidden activation in a shallow Fashion-MNIST classifier~\citep{xiao2017fashionmnist}. For every activation $\phi$, the model is
\begin{equation}
z = W_2\,\phi\!\left(W_1\operatorname{vec}(x)+b_1\right)+b_2,
\qquad
W_1\in\mathbb{R}^{256\times784},
\quad
W_2\in\mathbb{R}^{10\times256}.
\label{eq:activation-mlp}
\end{equation}

We instantiate nine otherwise identical models with ReLU, Leaky ReLU, PReLU, ELU, GELU, SiLU, Swish, $\tanh$, and sigmoid. Reinitializing with the same seed and reconstructing the same shuffled loader for each activation implements a controlled-variable comparison{—}only $\phi$ changes between runs.

Each model is trained on the Fashion-MNIST training split for three epochs using Adam, learning rate $10^{-3}$, batch size 128, hidden width 256, cross-entropy loss, and fixed random state. Following every deterministic optimizer step, the \xyy{synchronized CESTAC-style arithmetic-level shadow model} evaluates the same batch. We measure deterministic and representative batch accuracy and record the significant-digit report emitted by the hidden activation. The purpose is to determine whether activation choice changes the numerical reliability of otherwise matched hidden representations, and whether such changes evolve systematically over training iterations.

\begin{figure}[ht]
\centering
\subfloat[Hidden-activation significant digits.]{\includegraphics[width=0.48\linewidth]{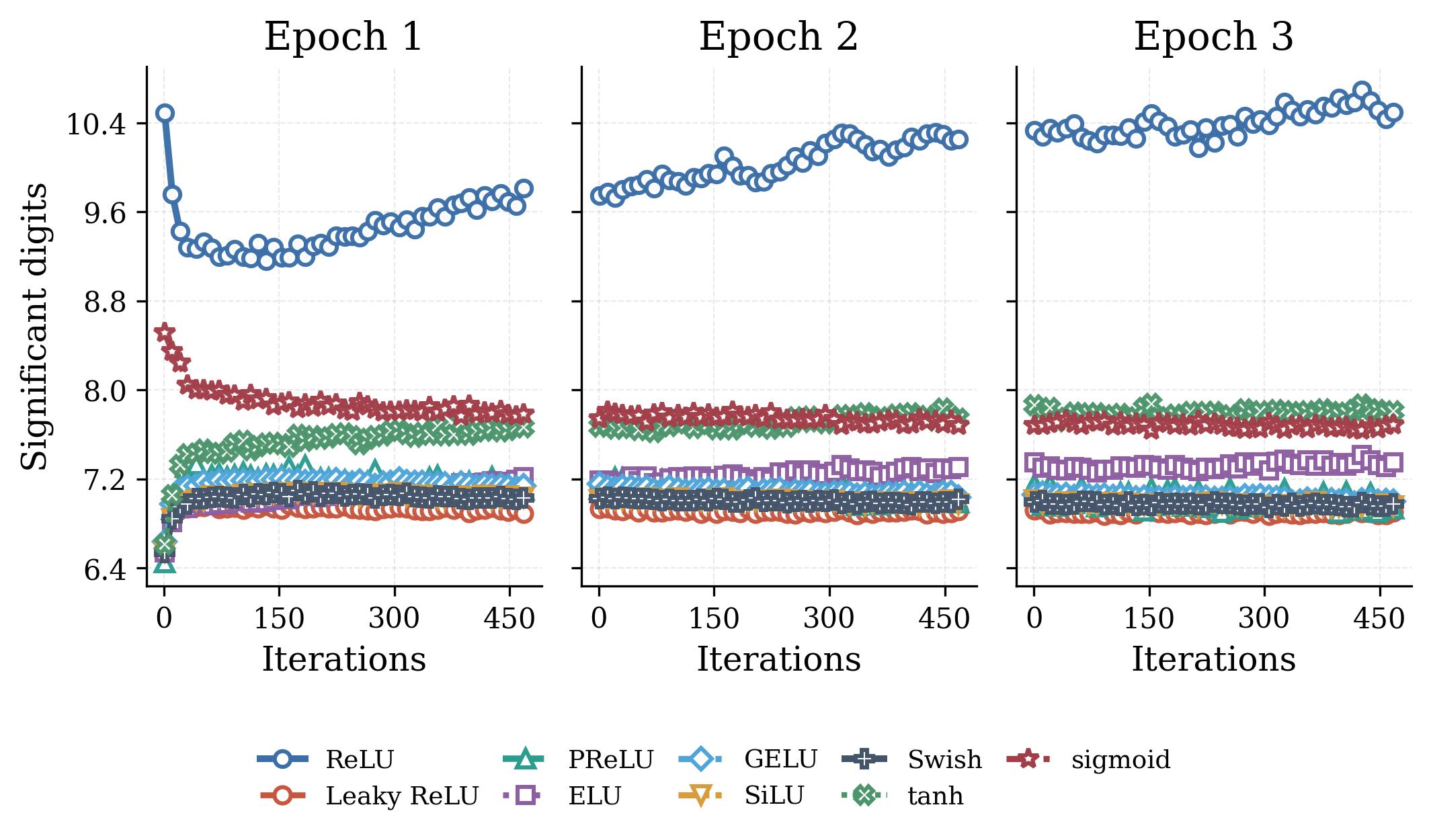}}
\subfloat[Hidden-activation minimum significant digits.]{\includegraphics[width=0.48\linewidth]{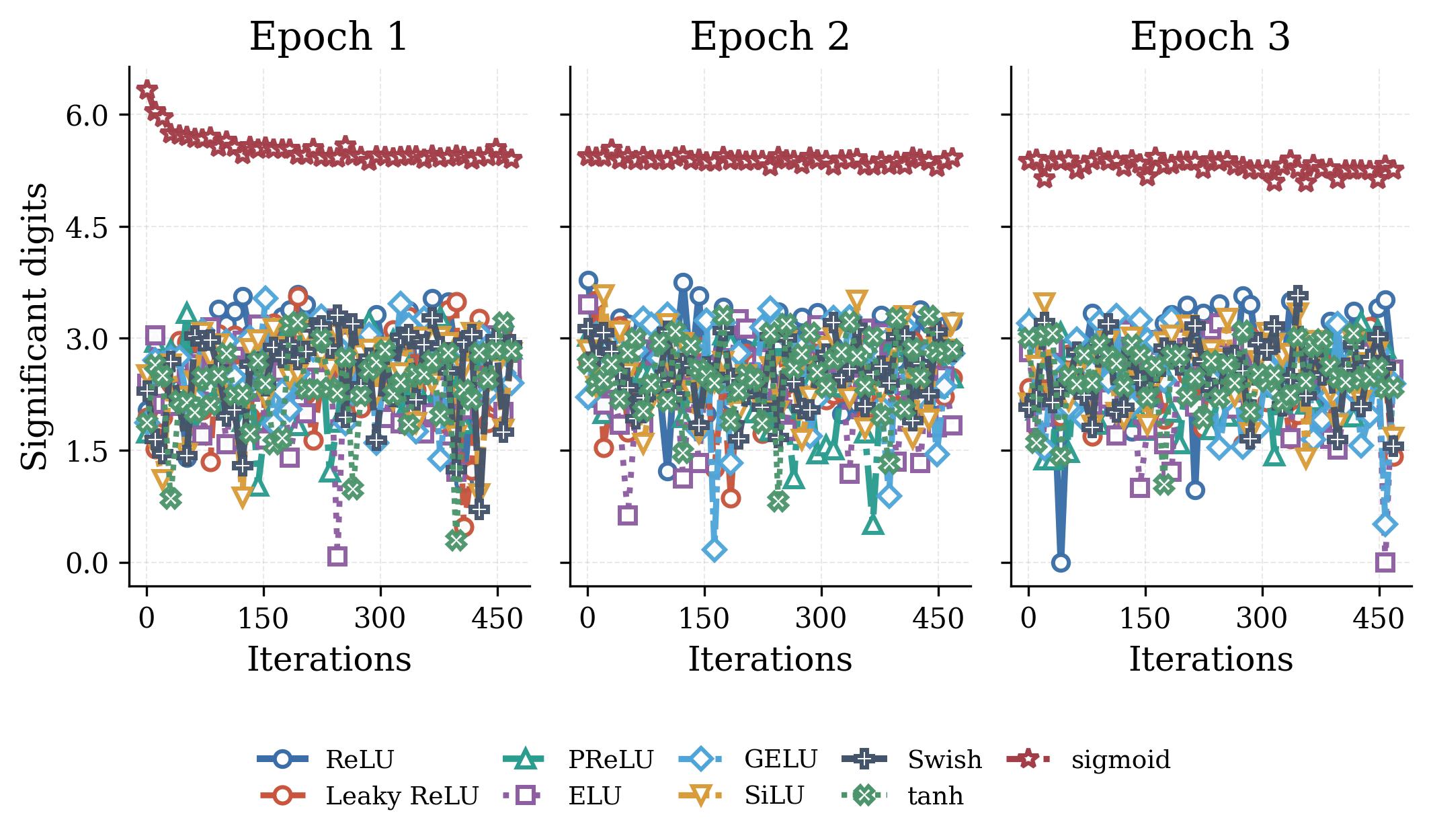}}

\subfloat[Classification accuracy.]{\includegraphics[width=0.48\linewidth]{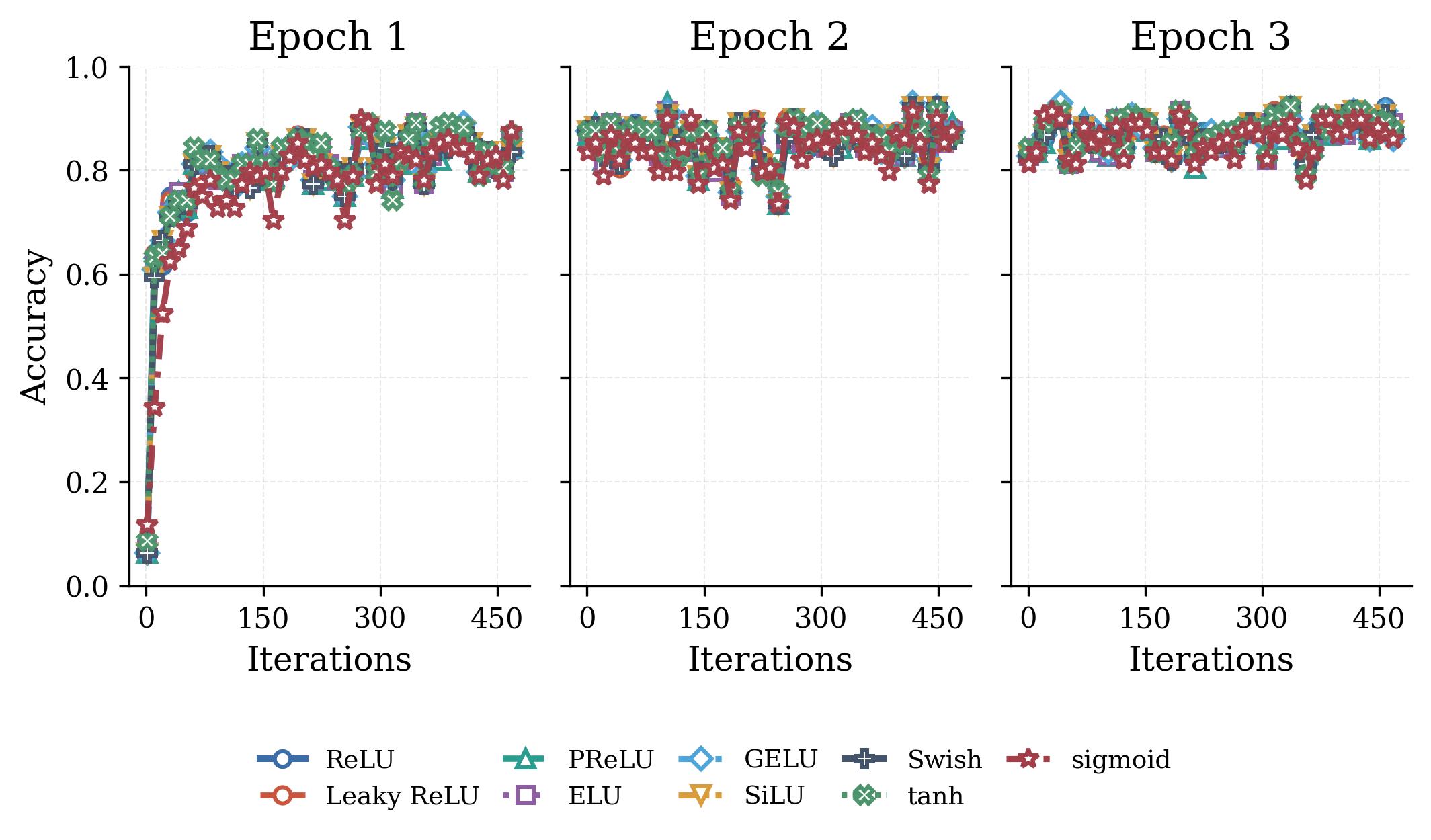}}
\caption{Hidden-activation reliability and batch accuracy on Fashion-MNIST. The first two panels report the average and minimum significant digits of the instrumented activation output, separated by epoch; the final panel reports the corresponding deterministic batch accuracy. For visual clarity, each curve shows a 10\% evenly spaced rendering subset selected by the plotting script, while the full iteration range is preserved on the horizontal axis.}
\label{fig:Fashion-MNIST_activation}
\end{figure}

Figure~\ref{fig:Fashion-MNIST_activation} shows that task accuracy alone does not explain the numerical reliability, and it does not exhibit a significant relationship with the numerical stability of each activation function. All neural networks with distinct activation functions achieve comparable classification accuracy  after training with three epochs. ReLU yields the highest-activation-level digit estimates, averaging about 10 significant digits across the run and increasing from roughly 9.2 to 10.5 digits across the three epochs. This phenomenon matches the theoretical analysis that ReLU is inherently stable with respect to perturbations; as indicated in \citep{reynacruz:hal-05367563}, the output error never exceeds the input error, and ReLU is a preferred robust choice for architectures that is sensitive to noise or quantization effect. Sigmoid and PReLU achieve comparable final significant digits between approximately 7.6 and 7.8 digits. The experiment separates two effects that are often conflated: several activations are equally useful for the classifier, yet they expose different stochastic sensitivity at the hidden representation.

\subsection{Pre- and Post-Normalization Reliability}
\label{sec:exp-normalization}

The normalization experiment uses the same Fashion-MNIST data path and optimization protocol but replaces the hidden block by
\begin{equation}
h=W_1\operatorname{vec}(x)+b_1,
\qquad
\widetilde{h}=\mathcal{N}(h),
\qquad
z=W_2\operatorname{ReLU}(\widetilde{h})+b_2,
\label{eq:normalization-mlp}
\end{equation}
where $\mathcal{N}$ is BatchNorm, LayerNorm, or RMSNorm. Similarly to the simulation on activation function. All three models have width 256 and are trained for three epochs with Adam, learning rate $10^{-3}$, batch size 128, cross-entropy loss, and fixed random state. BatchNorm uses its learned affine parameters and batch statistics; LayerNorm normalizes the hidden feature dimension; and RMSNorm applies
\begin{equation}
\operatorname{RMSNorm}(h)
=
g\odot
\frac{h}{\sqrt{256^{-1}\sum_{j=1}^{256}h_j^2+10^{-6}}}.
\end{equation}

For each batch, the experiment captures one report at the output of the first linear transformation and another at the output of $\mathcal{N}$. If $C(\cdot)$ denotes the CESTAC digit estimator applied elementwise, the paired quantities are therefore
\begin{equation}
D_{\mathrm{pre}}=\operatorname{agg} C(h),
\qquad
D_{\mathrm{post}}=\operatorname{agg} C(\mathcal{N}(h)),
\label{eq:prepost-normalization-digits}
\end{equation}
with \texttt{avg\_digits}, \texttt{min\_digits}, or \texttt{max\_digits} as the selected aggregation. This within-batch pairing is intended to isolate how each normalization transforms numerical reliability, rather than comparing normalization only through final classification accuracy.

\begin{figure}[ht]
    \centering
    \subfloat[Pre-normalization significant digits.]{\includegraphics[width=0.48\linewidth]{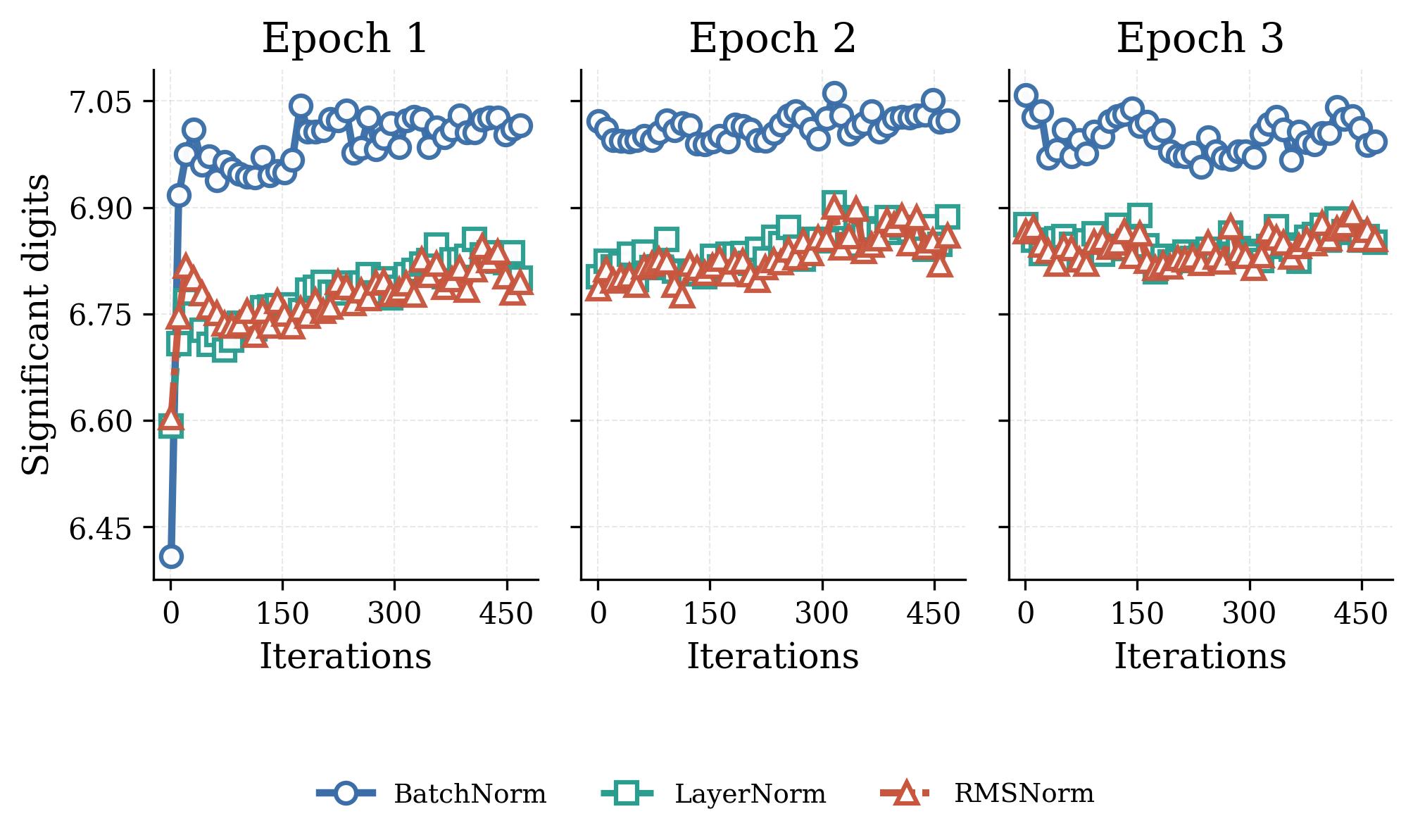}}
    \subfloat[Pre-normalization minimum significant digits.]{\includegraphics[width=0.48\linewidth]{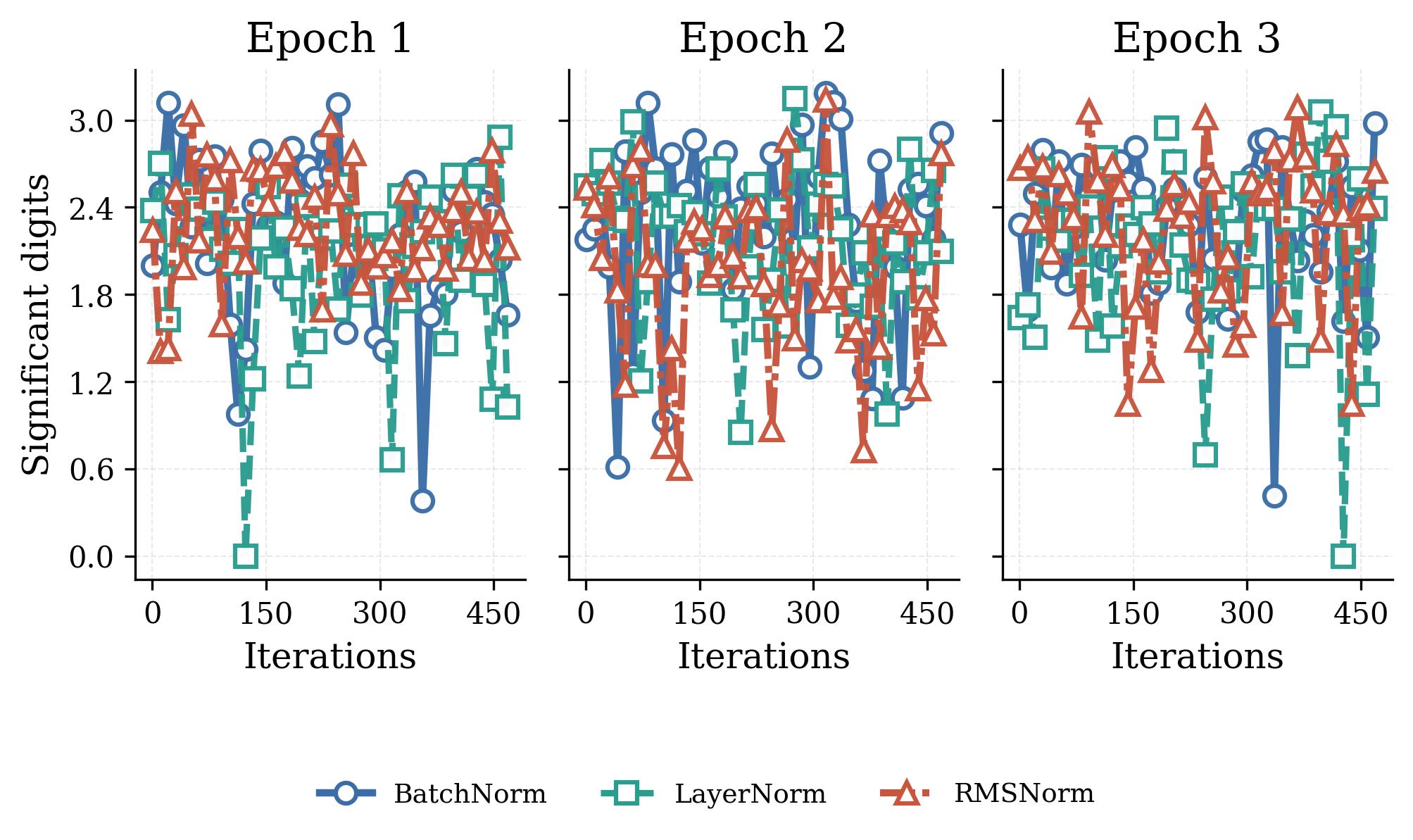}}\\
    \subfloat[Post-normalization significant digits.]{\includegraphics[width=0.48\linewidth]{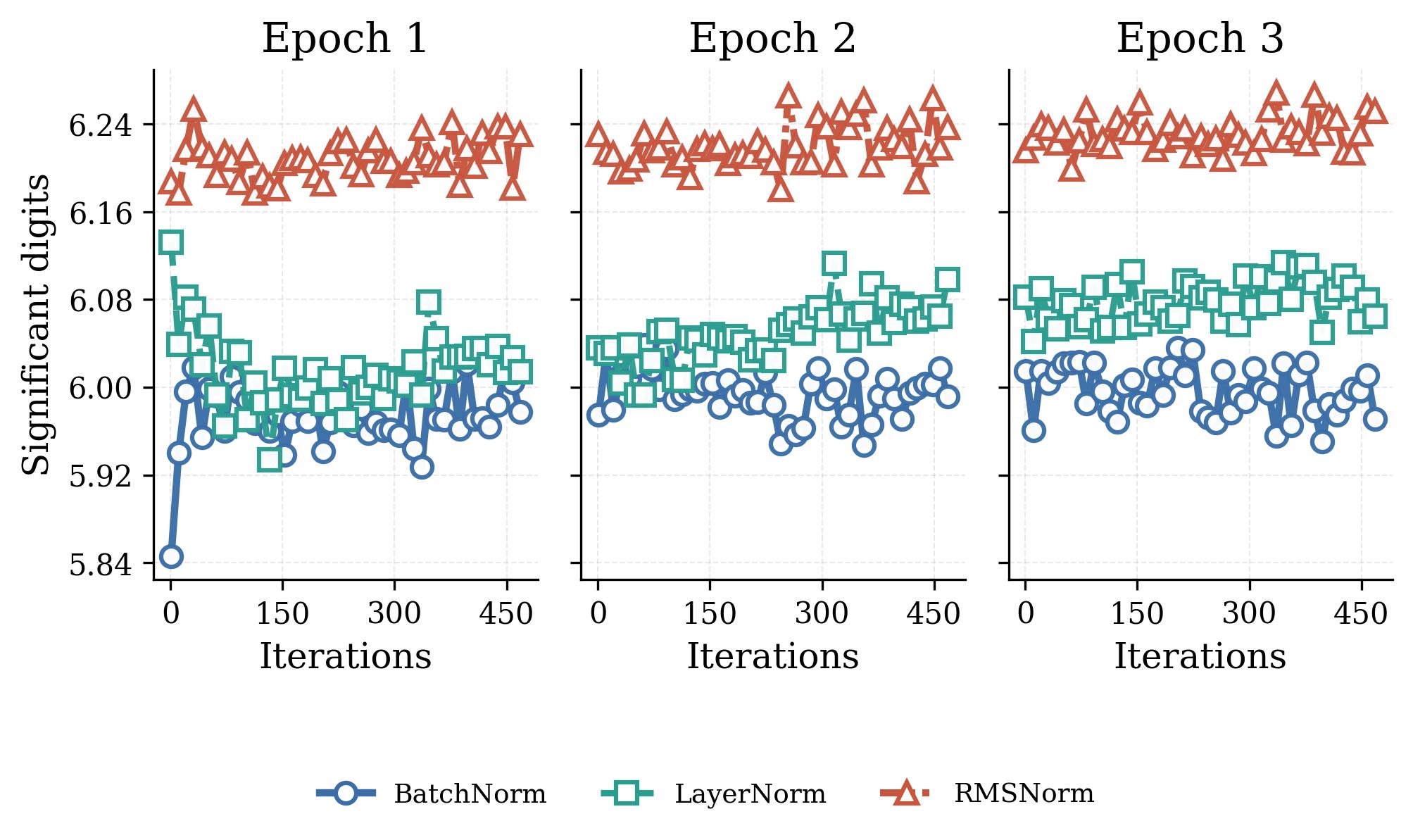}}
    \subfloat[Post-normalization minimum significant digits.]{\includegraphics[width=0.48\linewidth]{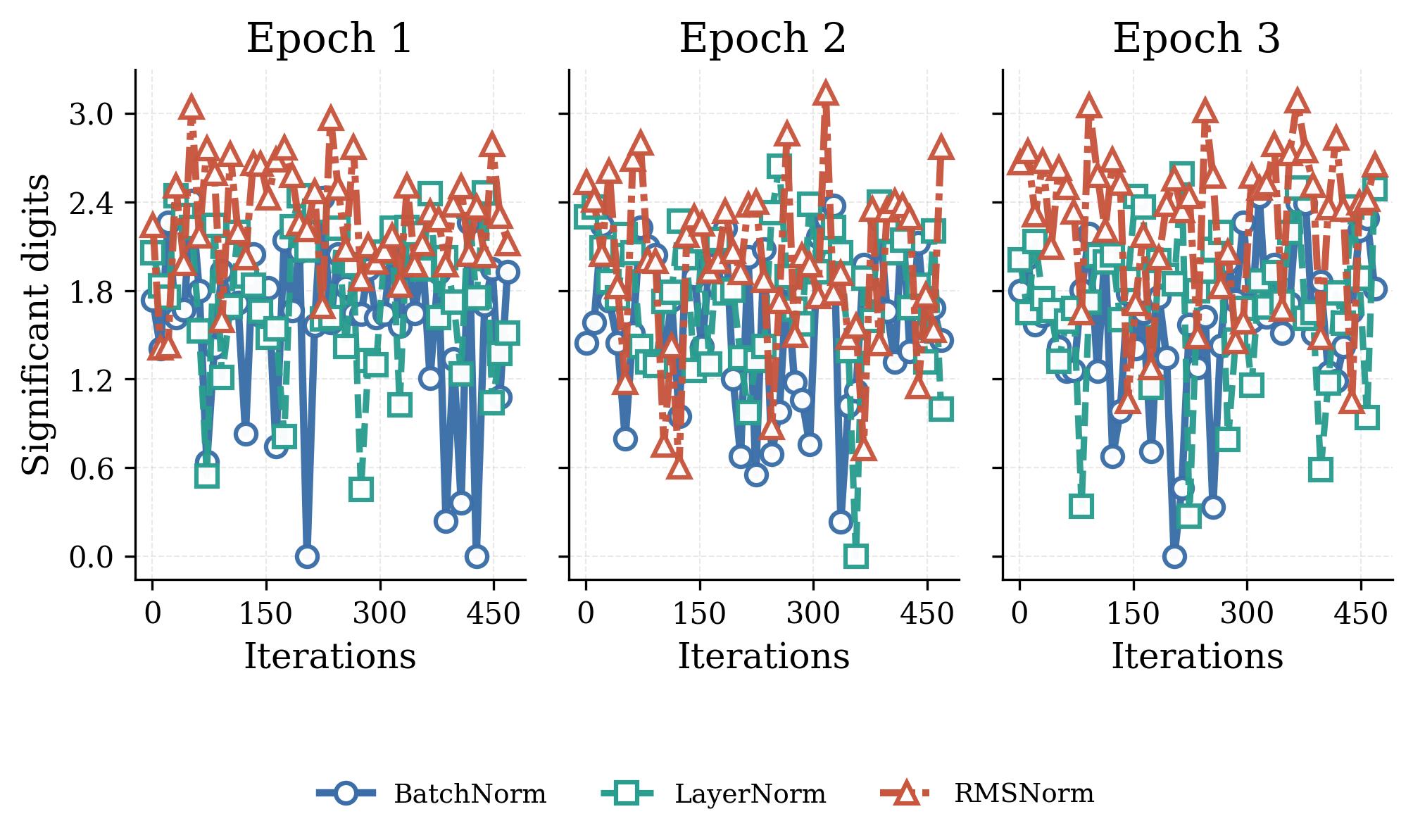}}\\
    \subfloat[Classification accuracy.]{\includegraphics[width=0.48\linewidth]{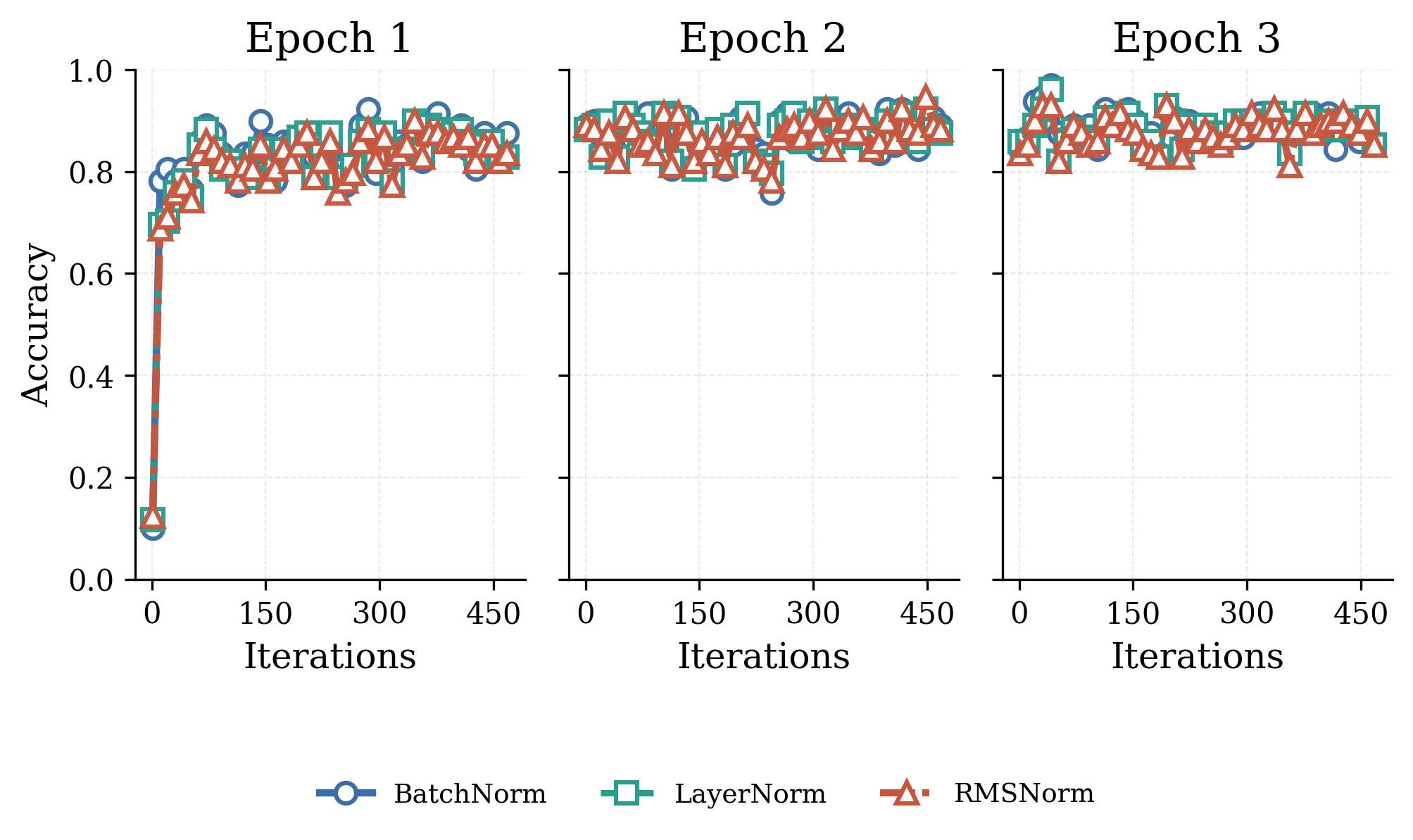}}
    \caption{Pre- and post-normalization reliability on Fashion-MNIST. The paired digit plots compare the hidden linear output before normalization with the normalized activation presented to the classifier. For visual clarity, each curve shows a 10\% evenly spaced rendering subset selected by the plotting script, while the full iteration range is preserved on the horizontal axis.}
    \label{fig:Fashion-MNIST}
\end{figure}

The paired normalization traces in Fig.~\ref{fig:Fashion-MNIST} indicate that normalization changes the numerical state of the hidden representation even when the final classification behavior remains similar. Averaged over the run, the pre-normalization hidden linear output's average digits lie around 6.45$\sim$7.05 significant digits, while the post-normalization output's average digits lie around 5.84$\sim$6.30 digits.  Thus, normalization does not merely rescale the representation from a learning perspective; it also introduces a distinct numerical transformation whose stochastic output exhibits lower estimated significant digits.

BatchNorm has the largest average reduction in this setting, from about 7.05 to 6.00 digits, whereas LayerNorm and RMSNorm show slightly smaller but still systematic reductions. This is consistent with the fact that BatchNorm estimates batch-dependent centering and variance statistics, so its output depends on reductions over a small mini-batch and on the subtraction of the batch mean. RMSNorm exhibits the highest number of significant digits across iterations and epochs, which is attributed to its elimination of explicit mean shift that might trigger a cancellation disaster. Since the accuracies of the three normalized classifiers are close, the observed digit loss is best interpreted as an operator-level reliability effect rather than as a failure of the classifier. 

From the minimum number of significant digits across iterations and epochs, three normalizations exhibit comparable low-tail behavior{---}similar trends and magnitudes. This further supports the claim that the observed loss of significant digits should therefore be interpreted as an operator-level numerical reliability effect rather than as direct evidence of degraded predictive performance.  To summarize, normalization changes where and how numerical uncertainty is injected into the hidden representation, but this uncertainty is not necessarily amplified enough to alter the final Fashion-MNIST accuracy in this shallow classifier.

\section{Limitations}

However, due to the intrinsic nature of the CESTAC principle, the computations are required to repeat, which triggers extra overhead of performance at runtime. Although the triplication of each arithmetic operation implies a minimum slowdown of approximately threefold, the practical overhead can be substantially higher because of stochastic-operation handling and numerical-instability detection. In particular, CADNA, the high-performance DSA library, has been reported to incur a 10$\times$$\sim$100$\times$ slowdown relative to standard floating-point arithmetic on both compute-bound and memory-bound benchmarks when DSA self-validation is enabled \cite{6614304, jezequel2020trustworthy}. Since the exact overhead depends on the application, the computer architecture, and the level of instability-detection \cite{Jez2008,eberhart2015high}, for large-scale datasets, we recommend applying our software to a representative calibration subset rather than to the entire data set.

% The computational overhead of CESTAC is not negligible. 

\section{Conclusion}

In this paper, we introduce a software framework called \texttt{noisefloat} for detection and localization of numerical instabilities in numerical simulations and deep learning computations. Our software not only enables numerical validation of a single-pass computation but also provides numerical stability checks during training and inference of neural networks, catering to most practical deep learning deployment applications.  Our experiments demonstrate that \texttt{noisefloat} is able to effectively detect numerically unstable operators. In the future, the study of focus will include reducing budget use while maintaining the effectiveness of current operators' functionalities.

\section{Acknowledgements}

The author would like to thank his advisor for the insightful comments and helpful suggestions.

\bibliographystyle{ACM-Reference-Format}
\bibliography{refs}

@article{pepe2026fuzzy,
  author  = {In{\'e}s Gonzalez Pepe and Hiba Akhaddar and Tristan Glatard and Yohan Chatelain},
  title   = {{Fuzzy PyTorch}: Rapid numerical variability evaluation for deep learning models},
  journal = {Transactions on Machine Learning Research},
  year    = {2026},
  url     = {https://openreview.net/forum?id=0ogq232VGP},
  issn    = {2835-8856},
}

@inproceedings{paszke2019pytorch,
  author    = {Paszke, Adam and Gross, Sam and Massa, Francisco and Lerer, Adam and Bradbury, James and Chanan, Gregory and Killeen, Trevor and Lin, Zeming and Gimelshein, Natalia and Antiga, Luca and Desmaison, Alban and Kopf, Andreas and Yang, Edward and DeVito, Zachary and Raison, Martin and Tejani, Alykhan and Chilamkurthy, Sasank and Steiner, Benoit and Fang, Lu and Bai, Junjie and Chintala, Soumith},
  title     = {{PyTorch}: An imperative style, high-performance deep learning library},
  booktitle = {Advances in Neural Information Processing Systems},
  volume    = {32},
  pages     = {8024--8035},
  publisher = {Curran Associates, Inc.},
  year      = {2019},
}

@inproceedings{abadi2016tensorflow,
  author    = {Abadi, Mart{\'i}n and Barham, Paul and Chen, Jianmin and Chen, Zhifeng and Davis, Andy and Dean, Jeffrey and Devin, Matthieu and Ghemawat, Sanjay and Irving, Geoffrey and Isard, Michael and Kudlur, Manjunath and Levenberg, Josh and Monga, Rajat and Moore, Sherry and Murray, Derek G. and Steiner, Benoit and Tucker, Paul and Vasudevan, Vijay and Warden, Pete and Wicke, Martin and Yu, Yuan and Zheng, Xiaoqiang},
  title     = {{TensorFlow}: A system for large-scale machine learning},
  booktitle = {12th {USENIX} Symposium on Operating Systems Design and Implementation ({OSDI} 16)},
  pages     = {265--283},
  publisher = {{USENIX} Association},
  address   = {Savannah, GA},
  year      = {2016},
}

@article{852391,
  author  = {Parker, D.S. and Pierce, B. and Eggert, P.R.},
  title   = {{Monte Carlo} arithmetic: How to gamble with floating point and win},
  journal = {Computing in Science \& Engineering},
  volume  = {2},
  number  = {4},
  pages   = {58--68},
  year    = {2000},
  doi     = {10.1109/5992.852391},
}

@misc{DBLP:journals/corr/DenisCP15,
  author     = {Christophe Denis and Pablo de Oliveira Castro and Eric Petit},
  title      = {{Verificarlo}: Checking floating point accuracy through {Monte Carlo} arithmetic},
  year       = {2015},
  url        = {https://arxiv.org/abs/1509.01347},
  eprint     = {1509.01347},
  eprinttype = {arXiv},
}

@article{10.1145/3773992,
  author  = {Wen, Zhongzhen and Liu, Hongyu and Zhu, Tingwei and Pan, Minxue and Wang, Shaohua and Lin, Yuanyi and Liu, Kairui and Zhang, Tian and Li, Xuandong},
  title   = {A study of floating-point precision tuning in deep learning operators implementations},
  journal = {{ACM} Transactions on Software Engineering and Methodology},
  year    = {2025},
  doi     = {10.1145/3773992},
}

@article{frechtling2015mcalib,
  author  = {Frechtling, Michael and Leong, Philip H. W.},
  title   = {{MCALIB}: Measuring sensitivity to rounding error with {Monte Carlo} programming},
  journal = {{ACM} Transactions on Programming Languages and Systems},
  volume  = {37},
  number  = {2},
  pages   = {5:1--5:25},
  year    = {2015},
  doi     = {10.1145/2665073},
}

@inproceedings{denis2016verificarlo,
  author    = {Denis, Christophe and de Oliveira Castro, Pablo and Petit, Eric},
  title     = {{Verificarlo}: Checking floating point accuracy through {Monte Carlo} arithmetic},
  booktitle = {{IEEE} 23rd Symposium on Computer Arithmetic ({ARITH})},
  pages     = {55--62},
  publisher = {{IEEE}},
  year      = {2016},
  doi       = {10.1109/ARITH.2016.31},
}

@article{GRAILLAT2019101017,
  author  = {Stef Graillat and Fabienne Jézéquel and Romain Picot and François Févotte and Bruno Lathuilière},
  title   = {Auto-tuning for floating-point precision with discrete stochastic arithmetic},
  journal = {Journal of Computational Science},
  volume  = {36},
  pages   = {101017},
  year    = {2019},
  doi     = {10.1016/j.jocs.2019.07.004},
}

@article{Vignes2004,
  author  = {Vignes, Jean},
  title   = {Discrete stochastic arithmetic for validating results of numerical software},
  journal = {Numerical Algorithms},
  volume  = {37},
  number  = {1},
  pages   = {377--390},
  year    = {2004},
  doi     = {10.1023/B:NUMA.0000049483.75679.ce},
}

@misc{jax2018github,
  author  = {James Bradbury and Roy Frostig and Peter Hawkins and Matthew James Johnson and Yash Katariya and Chris Leary and Dougal Maclaurin and George Necula and Adam Paszke and Jake Vander{P}las and Skye Wanderman-{M}ilne and Qiao Zhang},
  title   = {{JAX}: Composable transformations of {Python}+{NumPy} programs},
  year    = {2018},
  url     = {https://github.com/jax-ml/jax},
  note    = {Software},
  version = {0.3.13},
}

@article{Jez2008,
  author  = {J{\'e}z{\'e}quel, F. and Chesneaux, J.-M.},
  title   = {{CADNA}: A library for estimating round-off error propagation},
  journal = {Computer Physics Communications},
  volume  = {178},
  number  = {12},
  pages   = {933--955},
  year    = {2008},
  doi     = {10.1016/j.cpc.2008.02.013},
}

@article{scikit-learn,
  author  = {Pedregosa, F. and Varoquaux, G. and Gramfort, A. and Michel, V. and Thirion, B. and Grisel, O. and Blondel, M. and Prettenhofer, P. and Weiss, R. and Dubourg, V. and Vanderplas, J. and Passos, A. and Cournapeau, D. and Brucher, M. and Perrot, M. and Duchesnay, E.},
  title   = {{Scikit-learn}: Machine learning in {Python}},
  journal = {Journal of Machine Learning Research},
  volume  = {12},
  pages   = {2825--2830},
  year    = {2011},
}

@inproceedings{micikevicius2018mixed,
  author    = {Paulius Micikevicius and Sharan Narang and Jonah Alben and Gregory Diamos and Erich Elsen and David Garcia and Boris Ginsburg and Michael Houston and Oleksii Kuchaiev and Ganesh Venkatesh and Hao Wu},
  title     = {Mixed precision training},
  booktitle = {International Conference on Learning Representations},
  year      = {2018},
  url       = {https://openreview.net/forum?id=r1gs9JgRZ},
}

@article{10.1145/3381039,
  author  = {Cherubin, Stefano and Agosta, Giovanni},
  title   = {Tools for reduced precision computation: A survey},
  journal = {{ACM} Computing Surveys},
  volume  = {53},
  number  = {2},
  year    = {2020},
  month   = apr,
  doi     = {10.1145/3381039},
}

@misc{car2024kmeans,
  author        = {Erin Carson and Xinye Chen and Xiaobo Liu},
  title         = {Computing k-means in mixed precision},
  year          = {2026},
  url           = {https://arxiv.org/abs/2407.12208},
  eprint        = {2407.12208},
  archiveprefix = {arXiv},
  primaryclass  = {math.NA},
}

@article{eberhart2015high,
  author  = {Eberhart, Pac{\^o}me and Brajard, Julien and Fortin, Pierre and J{\'e}z{\'e}quel, Fabienne},
  title   = {High performance numerical validation using stochastic arithmetic},
  journal = {Reliable Computing},
  volume  = {21},
  pages   = {35--52},
  year    = {2015},
  url     = {https://interval.louisiana.edu/reliable-computing-journal/volume-21/reliable-computing-21-pp-035-052.pdf},
}

@inproceedings{6614304,
  author    = {Jézéquel, F. and Lamotte, J.-L. and Chubach, O.},
  title     = {Parallelization of discrete stochastic arithmetic on multicore architectures},
  booktitle = {2013 10th International Conference on Information Technology: New Generations},
  pages     = {160--166},
  year      = {2013},
  doi       = {10.1109/ITNG.2013.28},
}

@techreport{parker1997mca,
  author      = {Parker, Douglass Stott},
  title       = {{Monte Carlo} arithmetic: Exploiting randomness in floating-point arithmetic},
  number      = {CSD-970002},
  institution = {UCLA Computer Science Department},
  year        = {1997},
}

@book{higham2002accuracy,
  author    = {Higham, Nicholas J.},
  title     = {Accuracy and stability of numerical algorithms},
  edition   = {2nd},
  publisher = {{SIAM}},
  address   = {Philadelphia, PA},
  year      = {2002},
  doi       = {10.1137/1.9780898718027},
}

@inproceedings{jezequel2020trustworthy,
  author    = {J{\'e}z{\'e}quel, Fabienne and Graillat, Stef and Mukunoki, Daichi and Imamura, Toshiyuki and Iakymchuk, Roman},
  title     = {Can we avoid rounding-error estimation in {HPC} codes and still get trustworthy results?},
  booktitle = {Software Verification},
  series    = {Lecture Notes in Computer Science},
  volume    = {12549},
  pages     = {163--177},
  publisher = {Springer},
  year      = {2020},
  doi       = {10.1007/978-3-030-63618-0_10},
}

@article{doi:10.1137/17M1122918,
  author  = {Carson, Erin and Higham, Nicholas J.},
  title   = {A new analysis of iterative refinement and its application to accurate solution of ill-conditioned sparse linear systems},
  journal = {{SIAM} Journal on Scientific Computing},
  volume  = {39},
  number  = {6},
  pages   = {A2834--A2856},
  year    = {2017},
  doi     = {10.1137/17M1122918},
}

@inproceedings{Ben2022,
  author    = {Ben Khalifa, Dorra and Martel, Matthieu and Adj{\'e}, Assal{\'e}},
  title     = {{POP}: A tuning assistant for mixed-precision floating-point computations},
  booktitle = {Formal Techniques for Safety-Critical Systems},
  editor    = {Hasan, Osman and Mallet, Fr{\'e}d{\'e}ric},
  pages     = {77--94},
  publisher = {Springer},
  address   = {Cham},
  year      = {2020},
}

@misc{chen2026,
  author        = {Xinye Chen and Thibault Hilaire and Fabienne Jézéquel},
  title         = {Floating-point autotuning with customized precisions},
  year          = {2026},
  url           = {https://arxiv.org/abs/2606.08339},
  eprint        = {2606.08339},
  archiveprefix = {arXiv},
  primaryclass  = {cs.MS},
}

@inproceedings{10820739,
  author    = {Vanover, Jackson and Altuntas, Alper and Rubio-González, Cindy},
  title     = {Toward automated precision tuning of weather and climate models: A case study},
  booktitle = {{SC24-W}: Workshops of the International Conference for High Performance Computing, Networking, Storage and Analysis},
  pages     = {148--159},
  year      = {2024},
  doi       = {10.1109/SCW63240.2024.00026},
}

@misc{arar2025mixedprecisionaccumulationneural,
  author        = {El-Mehdi El Arar and Silviu-Ioan Filip and Theo Mary and Elisa Riccietti},
  title         = {Mixed precision accumulation for neural network inference guided by componentwise forward error analysis},
  year          = {2025},
  url           = {https://arxiv.org/abs/2503.15568},
  eprint        = {2503.15568},
  archiveprefix = {arXiv},
  primaryclass  = {cs.LG},
}

@article{VIGNES1993233,
  author  = {Vignes, Jean},
  title   = {A stochastic arithmetic for reliable scientific computation},
  journal = {Mathematics and Computers in Simulation},
  volume  = {35},
  number  = {3},
  pages   = {233--261},
  year    = {1993},
  doi     = {10.1016/0378-4754(93)90003-D},
}

@inproceedings{10387857,
  author    = {Ferro, Quentin and Graillat, Stef and Hilaire, Thibault and Jézéquel, Fabienne},
  title     = {Performance of precision auto-tuned neural networks},
  booktitle = {2023 {IEEE} 16th International Symposium on Embedded Multicore/Many-core Systems-on-Chip ({MCSoC})},
  pages     = {592--599},
  year      = {2023},
  doi       = {10.1109/MCSoC60832.2023.00092},
}

@misc{xiao2017fashionmnist,
  author        = {Xiao, Han and Rasul, Kashif and Vollgraf, Roland},
  title         = {{Fashion-MNIST}: A novel image dataset for benchmarking machine learning algorithms},
  year          = {2017},
  url           = {https://arxiv.org/abs/1708.07747},
  eprint        = {1708.07747},
  archiveprefix = {arXiv},
  primaryclass  = {cs.LG},
}

@unpublished{reynacruz:hal-05367563,
  author = {Reyna Cruz, Maria L. and Ruiz-Rohena, Kristalys and I. Guel, Yahriel and Salgado, Henry and Taldir, Lisa and Pena Ramos, Elian and Cervantes, Natalia and Rivero, Tzetzaith and Ceberio, Martine and Lauter, Christoph and Volkova, Anastasia},
  title  = {Contribution to error analysis of deep neural networks: Case of the activation functions},
  year   = {2025},
  url    = {https://inria.hal.science/hal-05367563},
  note   = {working paper or preprint},
}

@techreport{krizhevsky2009learning,
  author      = {Krizhevsky, Alex},
  title       = {Learning multiple layers of features from tiny images},
  institution = {University of Toronto},
  year        = {2009},
}

@inproceedings{vaswani2017attention,
  author    = {Vaswani, Ashish and Shazeer, Noam and Parmar, Niki and Uszkoreit, Jakob and Jones, Llion and Gomez, Aidan N. and Kaiser, Lukasz and Polosukhin, Illia},
  title     = {Attention is all you need},
  booktitle = {Advances in Neural Information Processing Systems},
  volume    = {30},
  year      = {2017},
}

@techreport{ieee_p3109_interim,
  author      = {{IEEE P3109 Working Group}},
  title       = {Interim report on binary floating-point formats for machine learning},
  institution = {{IEEE}},
  year        = {2024},
  note        = {Version 0.9.1},
}

@misc{ieee754-2019,
  author       = {{IEEE}},
  title        = {{IEEE} standard for floating-point arithmetic},
  year         = {2019},
  doi          = {10.1109/IEEESTD.2019.8766229},
  howpublished = {{IEEE} Std 754-2019},
}

@article{doi:10.1137/18M1226312,
  author  = {Higham, Nicholas J. and Mary, Theo},
  title   = {A new approach to probabilistic rounding error analysis},
  journal = {{SIAM} Journal on Scientific Computing},
  volume  = {41},
  number  = {5},
  pages   = {A2815--A2835},
  year    = {2019},
  doi     = {10.1137/18M1226312},
}

@inproceedings{krizhevsky2012imagenet,
  author    = {Alex Krizhevsky and Ilya Sutskever and Geoffrey E. Hinton},
  title     = {{ImageNet} classification with deep convolutional neural networks},
  booktitle = {Advances in Neural Information Processing Systems},
  volume    = {25},
  pages     = {1097--1105},
  year      = {2012},
}

@inproceedings{he2016deep,
  author    = {Kaiming He and Xiangyu Zhang and Shaoqing Ren and Jian Sun},
  title     = {Deep residual learning for image recognition},
  booktitle = {Proceedings of the {IEEE} Conference on Computer Vision and Pattern Recognition},
  pages     = {770--778},
  year      = {2016},
  doi       = {10.1109/CVPR.2016.90},
}

@inproceedings{devlin2019bert,
  author    = {Jacob Devlin and Ming-Wei Chang and Kenton Lee and Kristina Toutanova},
  title     = {{BERT}: Pre-training of deep bidirectional transformers for language understanding},
  booktitle = {Proceedings of the 2019 Conference of the North American Chapter of the Association for Computational Linguistics: Human Language Technologies},
  volume    = {1},
  pages     = {4171--4186},
  publisher = {Association for Computational Linguistics},
  address   = {Minneapolis, Minnesota},
  year      = {2019},
  doi       = {10.18653/v1/N19-1423},
}

@article{hinton2012deep,
  author  = {Geoffrey Hinton and Li Deng and Dong Yu and George E. Dahl and Abdel-rahman Mohamed and Navdeep Jaitly and Andrew Senior and Vincent Vanhoucke and Patrick Nguyen and Tara N. Sainath and Brian Kingsbury},
  title   = {Deep neural networks for acoustic modeling in speech recognition: The shared views of four research groups},
  journal = {{IEEE} Signal Processing Magazine},
  volume  = {29},
  number  = {6},
  pages   = {82--97},
  year    = {2012},
  doi     = {10.1109/MSP.2012.2205597},
}

@article{hannun2014deep,
  author        = {Awni Hannun and Carl Case and Jared Casper and Bryan Catanzaro and Greg Diamos and Erich Elsen and Ryan Prenger and Sanjeev Satheesh and Shubho Sengupta and Adam Coates and Andrew Y. Ng},
  title         = {{Deep Speech}: Scaling up end-to-end speech recognition},
  journal       = {arXiv preprint arXiv:1412.5567},
  year          = {2014},
  url           = {https://arxiv.org/abs/1412.5567},
  eprint        = {1412.5567},
  archiveprefix = {arXiv},
  primaryclass  = {cs.CL},
}

@article{lecun1998gradient,
  author  = {Yann LeCun and L{\'e}on Bottou and Yoshua Bengio and Patrick Haffner},
  title   = {Gradient-based learning applied to document recognition},
  journal = {Proceedings of the {IEEE}},
  volume  = {86},
  number  = {11},
  pages   = {2278--2324},
  year    = {1998},
  doi     = {10.1109/5.726791},
}

@article{lecun2015deep,
  author  = {Yann LeCun and Yoshua Bengio and Geoffrey Hinton},
  title   = {Deep learning},
  journal = {Nature},
  volume  = {521},
  number  = {7553},
  pages   = {436--444},
  year    = {2015},
  doi     = {10.1038/nature14539},
}

@inproceedings{qiu2026lowprecision,
  author        = {Qiu, Haiquan and Yao, Quanming},
  title         = {Why low-precision transformer training fails: An analysis on {Flash Attention}},
  booktitle     = {The Fourteenth International Conference on Learning Representations ({ICLR} 2026)},
  year          = {2026},
  url           = {https://openreview.net/forum?id=0jHyEKHDyx},
  eprint        = {2510.04212},
  archiveprefix = {arXiv},
  primaryclass  = {cs.LG},
}

@article{lee2024fp8back,
  author  = {Lee, Joonhyung and Bae, Jeongin and Kim, Byeongwook and Kwon, Se Jung and Lee, Dongsoo},
  title   = {To {FP8} and back again: Quantifying reduced precision effects on {LLM} training stability},
  journal = {arXiv preprint arXiv:2405.18710},
  year    = {2024},
  doi     = {10.48550/arXiv.2405.18710},
  url     = {https://arxiv.org/abs/2405.18710},
}

@inproceedings{wortsman2023stable,
  author    = {Wortsman, Mitchell and Dettmers, Tim and Zettlemoyer, Luke and Morcos, Ari S. and Farhadi, Ali and Schmidt, Ludwig},
  title     = {Stable and low-precision training for large-scale vision-language models},
  booktitle = {Advances in Neural Information Processing Systems},
  volume    = {36},
  pages     = {10271--10298},
  publisher = {Curran Associates, Inc.},
  year      = {2023},
  url       = {https://proceedings.neurips.cc/paper_files/paper/2023/hash/20bd42d82998bc61732c00452228e814-Abstract-Conference.html},
}

@inproceedings{liu2026grokking,
  author        = {Liu, Hanqing and Cao, Jianjun and Li, Yuanze and Zhou, Zijian},
  title         = {Grokking or glitching? How low-precision drives slingshot loss spikes},
  booktitle     = {{ICML} 2026 Workshop on High-Dimensional Learning Dynamics},
  year          = {2026},
  doi           = {10.48550/arXiv.2605.06152},
  url           = {https://arxiv.org/abs/2605.06152},
  eprint        = {2605.06152},
  archiveprefix = {arXiv},
  primaryclass  = {cs.LG},
  note          = {Spotlight},
}

@misc{budzinskiy20,
  author        = {Stanislav Budzinskiy and Wenyi Fang and Longbin Zeng and Philipp Petersen},
  title         = {Numerical stability analysis of large language models},
  year          = {2026},
  url           = {https://arxiv.org/abs/2503.10251},
  eprint        = {2503.10251},
  archiveprefix = {arXiv},
  primaryclass  = {math.NA},
}

@article{10.1093/imanum/draf130,
  author  = {Beuzeville, Théo and Buttari, Alfredo and Gratton, Serge and Mary, Theo},
  title   = {Deterministic and probabilistic rounding error analysis of neural networks in floating-point arithmetic},
  journal = {{IMA} Journal of Numerical Analysis},
  pages   = {draf130},
  year    = {2026},
  doi     = {10.1093/imanum/draf130},
}

@article{Goldberg1991FloatingPoint,
  author  = {Goldberg, David},
  title   = {What every computer scientist should know about floating-point arithmetic},
  journal = {{ACM} Computing Surveys},
  volume  = {23},
  number  = {1},
  pages   = {5--48},
  year    = {1991},
  doi     = {10.1145/103162.103163},
}

@inproceedings{vignes1974error,
  author    = {Vignes, Jean and La Porte, M.},
  title     = {Error analysis in computing},
  booktitle = {Information Processing: Proceedings of the 6th IFIP Congress},
  pages     = {610--614},
  publisher = {North-Holland},
  year      = {1974},
}

@inproceedings{ferro2022neural,
  author    = {Ferro, Quentin and Graillat, Stef and Hilaire, Thibault and J{\'e}z{\'e}quel, Fabienne and Lewandowski, Basile},
  title     = {Neural network precision tuning using stochastic arithmetic},
  booktitle = {Computer Arithmetic},
  series    = {Lecture Notes in Computer Science},
  volume    = {13253},
  pages     = {36--53},
  publisher = {Springer},
  year      = {2022},
  url       = {https://hal.science/hal-03682645},
}

@inproceedings{ioualalen2019precision,
  author    = {Ioualalen, A. and Martel, M.},
  title     = {Neural network precision tuning},
  booktitle = {Quantitative Evaluation of Systems},
  series    = {Lecture Notes in Computer Science},
  volume    = {11785},
  pages     = {129--143},
  publisher = {Springer},
  year      = {2019},
}

@misc{lauter2020framework,
  author        = {Lauter, Christoph and Volkova, Anastasia},
  title         = {A framework for semi-automatic precision and accuracy analysis for fast and rigorous deep learning},
  year          = {2020},
  url           = {https://arxiv.org/abs/2002.03869},
  eprint        = {2002.03869},
  archiveprefix = {arXiv},
  primaryclass  = {cs.LG},
}

@inproceedings{faraone2019montecarlo,
  author    = {Faraone, Julian and Leong, Philip},
  title     = {{Monte Carlo} deep neural network arithmetic},
  booktitle = {International Conference on Learning Representations},
  year      = {2019},
  url       = {https://openreview.net/forum?id=HyePberFvH},
}

@inproceedings{khalifa2024fixedpoint,
  author    = {Ben Khalifa, Dorra and Martel, Matthieu},
  title     = {Efficient implementation of neural networks usual layers on fixed-point architectures},
  booktitle = {Proceedings of the 25th International Workshop on Software and Compilers for Embedded Systems},
  year      = {2024},
  doi       = {10.1145/3652032.3657578},
}

@article{lohar2023sound,
  author  = {Lohar, Debasmita and Jeangoudoux, Clothilde and Volkova, Anastasia and Darulova, Eva},
  title   = {Sound mixed fixed-point quantization of neural networks},
  journal = {{ACM} Transactions on Embedded Computing Systems},
  volume  = {22},
  number  = {5s},
  year    = {2023},
  doi     = {10.1145/3609118},
}

@misc{gernigon2024adaqat,
  author        = {Gernigon, C{\'e}dric and Coggiola, Cl{\'e}ment and Filip, Silviu-Ioan and Sentieys, Olivier and Bruno, Micka{\"e}l},
  title         = {{AdaQAT}: Adaptive bit-width quantization-aware training},
  year          = {2024},
  url           = {https://arxiv.org/abs/2404.16876},
  eprint        = {2404.16876},
  archiveprefix = {arXiv},
  primaryclass  = {cs.LG},
}

@inproceedings{bengio2013estimating,
  author    = {Bengio, Yoshua and L{\'e}onard, Nicholas and Courville, Aaron},
  title     = {Estimating or propagating gradients through stochastic neurons for conditional computation},
  booktitle = {Advances in Neural Information Processing Systems},
  volume    = {26},
  year      = {2013},
}

@inproceedings{zhang2015character,
  author    = {Zhang, Xiang and Zhao, Junbo and LeCun, Yann},
  title     = {Character-level convolutional networks for text classification},
  booktitle = {Advances in Neural Information Processing Systems},
  volume    = {28},
  year      = {2015},
}

@misc{jezequel2024innerproducts,
  author = {J{\'e}z{\'e}quel, Fabienne and Mary, Theo},
  title  = {Probabilistic estimation of the accuracy of inner products and application to stochastic validation},
  year   = {2024},
  url    = {https://hal.science/hal-04554459},
}

\end{document}